\documentclass[12pt]{article}
\usepackage{epsfig}
\usepackage{authblk}
\usepackage{epstopdf}
\usepackage{graphics}
\usepackage{amsmath}
\usepackage{amsthm}
\usepackage{amssymb}
\usepackage{array}
\usepackage{subfig} 
\usepackage[font={footnotesize}]{caption}
\usepackage{relsize}
\usepackage[nottoc]{tocbibind}
\usepackage{hyperref}
\usepackage{float}

\setlength{\textwidth}{6.5in}
\oddsidemargin=0.0in
\evensidemargin=0.0in
\usepackage{chngcntr}
\usepackage{wrapfig}

\counterwithin*{equation}{section}
  
\newtheorem{thm}{Theorem}[section] 
\newtheorem{lem}[thm]{Lemma}  
\newtheorem{cor}[thm]{Corollary} 
\newtheorem{prop}[thm]{Proposition}  

\newtheorem{hyp}[thm]{Hypothesis}
\newtheorem{claim}[thm]{Claim}
\newtheorem{defn}[thm]{Definition}

\newtheoremstyle{named}{}{}{\itshape}{}{\bfseries}{.}{.5em}{\thmnote{#3's }#1}
\theoremstyle{named}

\theoremstyle{definition}
\newtheorem{rmk}{Remark}

\title{Stability of Infinite Systems of Coupled Oscillators Via Random Walks on Weighted Graphs}

\author{Jason J. Bramburger\\ Division of Applied Mathematics\\ Brown University \\ Providence, Rhode Island 02912\\ USA}

\date{} 

\begin{document} 

\maketitle

\abstract{Weakly coupled oscillators are used throughout the physical sciences, particularly in mathematical neuroscience to describe the interaction of neurons in the brain. Systems of weakly coupled oscillators have a well-known decomposition to a canonical phase model which forms the basis of our investigation in this work. Particularly, our interest lies in examining the stability of synchronous (phase-locked) solutions to this phase system: solutions with phases having the same temporal frequency but differ through time-independent phase-lags. The main stability result of this work comes from adapting a series of investigations into random walks on infinite weighted graphs. We provide an interesting link between the seemingly unrelated areas of coupled oscillators and random walks to obtain algebraic decay rates of small perturbations off the phase-locked solutions under some minor technical assumptions. We also provide some interesting and motivating examples that demonstrate the stability of phase-locked solutions, particularly that of a rotating wave solution arising in a well studied paradigm in the theory of coupled oscillators. }

\section{Introduction} \label{sec:Introduction} 

Oscillatory behaviour is well-known to arise in many areas of biology, particularly in the study of neural networks. Through chemical and/or electrical synapses, neurons can fall into complex synchronous periodic behaviour patterns governing a wide variety of cognitive tasks $\cite{Buschman,CarmichaelChesselet,JutrasBuffalo}$ and potentially aiding in memory formation $\cite{MohnsBlumberg}$. An important approach to studying the dynamic behaviour of neural activity mathematically has been through the study of weakly coupled oscillators $\cite{ErmentroutChow,WeaklyConnectedBook}$. Hence, from a theoretical perspective, modern dynamical systems theory can be used to greatly enhance our understanding of synchronous patterns in neural networks.      

One of the most important characteristics of studying weakly coupled oscillators is that each oscillator can be reduced through a process of averaging to a single phase variable lying on the circle $S^1$ under minor technical assumptions \cite{Corinto,DeVille,ErmentroutAverage,WeaklyConnectedBook,Udeigwe}. It is exactly these so-called `phase models' that have allowed researchers to identify patterns of synchrony arising in the study of coupled oscillators. To date there have been many nontrivial stable synchrony patterns that have been identified, including traveling waves, target patterns and rotating waves $\cite{ErmentroutStability,ErmentroutRen,Paullet,ErmentroutSpiral,Udeigwe}$. Some authors have even postulated that such patterns of synchrony in neurons could be related to simple geometric structures perceived during visual hallucinations $\cite{ErmentroutCowan,Tass}$.

In this paper we will work to further the current understanding of the stability in coupled phase models by inspecting an infinite system of ordinary differential equations. We will consider a countably infinite index set $V$ along with the system of ordinary differential equations indexed by the elements of $V$ given by
\begin{equation} \label{PhaseSystem}
	\dot{\theta}_v(t) = \omega_v + \sum_{v'\in N(v)} H(\theta_{v'}(t) - \theta_v(t)),
\end{equation}
for each $v \in V$. Here $\dot{x} = dx/dt$, $\theta_v: \mathbb{R}^+ \to S^1$ and $H:S^1 \to S^1$ is (at least) twice differentiable and a $2\pi$-periodic function describing the interaction between oscillators indexed by the elements of $N(v) \subseteq V\setminus\{v\}$ for all $v \in V$. One thinks of the elements in $N(v)$ as belonging to a neighbourhood of the element $v$. More precisely, we will say that if $v' \in N(v)$ the state $\theta_{v'}$ {\em influences} the state $\theta_{v}$. Throughout this manuscript we will assume the influence topology to be symmetric, which formally means that $v \in N(v')$ if and only if $v' \in N(v)$ for all $v,v'\in V$. The $\omega_v$ are taken to be real-valued constants representing intrinsic differences in the oscillators and/or external inputs. Our interest in system $(\ref{PhaseSystem})$ is to describe sufficient conditions for the stability of synchronized patterns of oscillation arising as solutions to this model. 

The differential equation $(\ref{PhaseSystem})$ can be interpreted as a generalization of the celebrated Kuramoto model which has widely been applied in mathematical neuroscience $\cite{Cumin, Kuramoto}$. Particularly, we will see through the applications of our work here that the motivating examples of interaction functions $H$ are given by sinusoidal functions, as in the Kuramoto model. Furthermore, this work refrains from inspecting the case when $V$ has a finite number of elements since this was taken care of in a previous study by Ermentrout $\cite{ErmentroutStability}$. It was exactly this study that made explicit the link between spectral graph theory and the stability of coupled oscillators, showing that under mild conditions one may interpret the linearization about a synchronous solution as corresponding to a weighted graph. Many investigations have used these results to show that finite systems of coupled oscillators exhibit stable solutions for an arbitrary finite number of oscillators, but most fail to mention how stability changes as the number of oscillators increases without bound. What could be expected is that the spectral gap in this case shrinks, causing eigenvalues to converge onto the imaginary axis as the number of oscillators tends to infinity. It is exactly this type of phenomena which has been a recent area of investigation for partial differential equations $\cite{SandstedeScheel}$. Studies have shown that the truncation from infinite to finite can help to stabilize otherwise unstable solutions, and that faint remnants of the instabilities reveal themselves for large finite truncations. Therefore, in this work we wish to detail sufficient conditions for stability in infinite dimensions, allowing one to address all of the previously stated concerns as it pertains to a well-studied paradigm in mathematical neuroscience.    

Typical investigations into coupled phase models focus on one or both of the following aspects: the interaction between oscillators (different $H$ functions) and the effect of coupling topologies (different $V$ and $N(v)$). The work presented here focusses more on the latter since we will follow similar method's utilized in the finite dimensional setting to understand how the coupling topologies can endow an underlying graph theoretical framework to help understand the stability of synchronous solutions. We will see that this graph theoretic framework can reward networks with more complex connections with faster decay rates of slight perturbations. This paper will use some of the well-developed theory of random walks on infinite weighted graphs to obtain algebraic (as opposed to exponential) decay rates of small perturbations off synchronous solutions to our phase model $(\ref{PhaseSystem})$. In fact, the reason we have chosen $V$ to represent the index set for our differential equation is that we will see in the coming sections it can be shown to correspond to the vertices of an infinite weighted graph. In doing so we provide a fascinating link between two seemingly unrelated areas of mathematics and introduce a new application for some very theoretical mathematical tools.  

System $(\ref{PhaseSystem})$ generalizes the Kuramoto-type coupled phase model studied in $\cite{MyWork}$ given by
\begin{equation}\label{PhaseSystem2}
	\dot{\theta}_{i,j} = \omega + \sin(\theta_{i+1,j} - \theta_{i,j}) + \sin(\theta_{i-1,j} - \theta_{i,j}) + \sin(\theta_{i,j+1} - \theta_{i,j}) + \sin(\theta_{i,j-1} - \theta_{i,j}),
\end{equation}
for each $(i,j)\in\mathbb{Z}^2$. In particular, system (\ref{PhaseSystem2}) was shown to possess a rotating wave solution, which as previously mentioned, pertains to important biological phenomena. In this work we will extend the investigation of $\cite{MyWork}$ by demonstrating that not only the rotating wave solution is stable, but there are infinitely many stable synchronous states to the model (\ref{PhaseSystem2}) which can be analyzed using the methods of this paper. By using a particular phase model as a case study, we aid the reader in better understanding the techniques of this paper and further some known mathematical results.    

This paper is organized as follows. In Section $\ref{sec:Perturbation}$ we lay out the basic mathematical framework that allows us to study the stability of synchronous solutions. In Section $\ref{sec:Graphs}$ we provide a brief overview of the relevant results and techniques from graph theory, including an investigation of random walks on infinite weighted graphs in Section $\ref{subsec:RandomWalks}$. These results on graphs are then used to formulate the main stability result of this paper, Theorem~\ref{thm:Stability}, which is presented in Section $\ref{sec:Stability}$. Theorem~\ref{thm:Stability} shows that if the graph defined by the linearization about the synchronous solution has dimension at least two, then we may obtain nonlinear stability of said synchronous solution. The proof of Theorem~\ref{thm:Stability} is left to Section $\ref{sec:StabilityProof}$ where a typical Picard iteration method is employed in conjunction with the results from infinite weighted graphs. In Section $\ref{sec:Applications}$ we present a thorough analysis of some stable states in a particular coupled phase model, demonstrating that there are infinitely many stable synchronous states in this model. Finally in Section $\ref{sec:Discussion}$ we present a brief discussion of the results in this work, particularly regarding the shortcomings of the main stability result regarding one dimensional graphs, along with some future directions for inquiry.

\section{The Perturbation System} \label{sec:Perturbation} 

Our interest in this work lies in investigating synchronous solutions to $(\ref{PhaseSystem})$. Such solutions are all oscillating with identical frequency and are typically referred to as phase-locked. Solutions of this type take the form 
\begin{equation} \label{PhaseAnsatz}
	\theta_v(t) = \Omega t + \bar{\theta}_v,
\end{equation}  
where $\bar{\theta}_v$ is a time-independent phase-lag for all $v \in V$ and the elements $\theta_v(t)$ are identically oscillating with period $2\pi/\Omega$, for some $\Omega \in \mathbb{R}$. The ansatz $(\ref{PhaseAnsatz})$ reduces $(\ref{PhaseSystem})$ to solving the infinite system  
\begin{equation} \label{PhaseLagsEqn}
	(\omega_v - \Omega) + \sum_{v' \in N(v)} H(\bar{\theta}_{v'} - \bar{\theta}_v) = 0
\end{equation}
for the phase-lags $\bar{\theta} = \{\bar{\theta}_v\}_{v \in V}$ and the frequency $\Omega$. 

Assuming we have a solution to $(\ref{PhaseLagsEqn})$, our interest turns to applying a slight perturbation to the ansatz $(\ref{PhaseAnsatz})$, written
\begin{equation} \label{PerturbedAnsatz}
	\theta_v(t) = \Omega t + \bar{\theta}_v + \psi_v(t),
\end{equation}
and inspecting conditions to which $\psi_v(t) \to 0$ as $t \to \infty$ for all $v \in V$. Notice that the perturbed ansatz $(\ref{PerturbedAnsatz})$ leads to the system of differential equations
\begin{equation} \label{PerturbationSystem}
	\dot{\psi}_v = (\omega_v - \Omega) + \sum_{v' \in N(v)} H(\bar{\theta}_{v'} + \psi_{v'} - \bar{\theta}_{v} - \psi_{v}), \ \ \ \ \ v \in V,
\end{equation} 
which from $(\ref{PhaseLagsEqn})$ has a steady-state solution given by $\psi = 0$. We have also suppressed the dependence of the variables in $(\ref{PerturbationSystem})$ on $t$ for the ease of notation. System $(\ref{PerturbationSystem})$ forms the basis for our investigation in this work. 

As is well-known from the study of finite-dimensional ordinary differential equations, linearizing about a steady-state solution can provide great insight into the stability of this solution. In the case of infinite-dimensional ordinary differential equations one may follow this line of inquiry and attempt similar techniques, but there are some subtleties which only reveal themselves in infinite dimensions. Notice that linearizing $(\ref{PerturbationSystem})$ about its steady-state solution $\psi = 0$ results in the linear operator, $L_{\bar{\theta}}$, acting upon the sequences $x = \{x_v\}_{v \in V}$ by
\begin{equation} \label{Linearization}
	[L_{\bar{\theta}}x]_v = \sum_{v' \in N(v)} H'(\bar{\theta}_{v'} - \bar{\theta}_{v})(x_{v'} - x_v),
\end{equation} 
for each $v \in V$. Operators of the form (\ref{Linearization}) leave the possibility that a continuum of spectral values intersects the imaginary axis, and therefore typical exponential stability results relying on the existence of a spectral gap cannot be inferred about any phase-locked solution to $(\ref{PhaseSystem})$.

Let us illustrate with a prototypical example so that the reader can better comprehend the situation we wish to investigate here. Consider $V = \mathbb{Z}^2$ and an associated linear operator in the form of (\ref{Linearization}), acting upon the sequences $x = \{x_{i,j}\}_{(i,j) \in \mathbb{Z}}$, given by 
\begin{equation} \label{LinearizationEx}
	[Lx]_{i,j} = (x_{i+1,j} - x_{i,j}) + (x_{i-1,j} - x_{i,j}) + (x_{i,j+1} - x_{i,j}) + (x_{i,j-1} - x_{i,j}),	
\end{equation}
which is referred to as the two-dimensional discrete Laplacian operator. It is well-known that when posed upon the spaces $\ell^2(\mathbb{Z}^2)$ or $\ell^\infty(\mathbb{Z}^2)$, defined below in (\ref{ellpSpace}) and (\ref{ellInfty}), respectively, the spectrum of (\ref{LinearizationEx}) is exactly the subset of the complex plane given by the real interval $[-4,0]$ \cite{Cahn}. Hence, we see that the spectrum is both uncountable and intersects the imaginary axis of the complex plane, therefore implying that exponential dichotomies based upon the linearization (\ref{LinearizationEx}) cannot be inferred, at least when working with the Banach spaces $\ell^2(\mathbb{Z}^2)$ or $\ell^\infty(\mathbb{Z}^2)$. In Section~\ref{subsec:Trivial} we will return to this example to see that stability can in fact be inferred based upon the linearization (\ref{LinearizationEx}) using the stability theorem presented in this work. 

Moving back to the general situation of (\ref{Linearization}), when $H'$ is uniformly bounded above, a simple argument following similarly to that which is laid out in Proposition $5.2$ of $\cite{MyWork}$ shows that $L_{\bar{\theta}}$ is not a Fredholm operator on $\ell^\infty(V)$, and again we cannot obtain exponential dichotomies based upon these linear operators. Therefore, it is such situations that necessitates the work on stability of phase-locked solutions which is presented in this manuscript. We will overcome this failure to produce exponential dichotomies and obtain algebraic dichotomies instead. Prior to providing the main stability result of this paper, we require a brief overview of the relevant graph theoretic techniques which we will be applying in this work.

\section{Denumerable Graph Networks} \label{sec:Graphs} 

Throughout the following subsections we will provide the necessary definitions and results to obtain our main result Theorem~\ref{thm:Stability}. We will use similar nomenclature and notation to that of Delmotte $\cite{Delmotte}$, which appears to now be quite standard. Another excellent source which summarizes much of the relevant results and more is Telcs' textbook $\cite{TelcsBook}$. The majority of this section comes as a brief literature review to bring the reader up to date on some of the relevant results pertaining to our work in this manuscript. We will also provide some brief lemmas and corollaries to extrapolate the results and make them more easily applicable to our work here.

\subsection{Preliminary Definitions} 

We consider a graph $G = (V,E)$ with a countably infinite collection of vertices, $V$, and a set of unoriented edges between these vertices, $E$, with the property that at most one edge can connect two vertices. We refer to a loop as an edge which initiates and terminates at the same vertex. If there exists an edge $e\in E$ connecting the two vertices $v,v'\in V$ then we write $v\sim v'$, and since the edges are unoriented this relation is naturally symmetric in that $v' \sim v$ as well. In this way we may equivalently consider the set of edges $E$ as a subset of the product $V \times V$. A graph is called connected if for any two vertices $v,v'\in V$ there exists a finite sequence of vertices in $V$, $\{v_1,v_2, \dots , v_n\}$, such that $v\sim v_1$, $v_1\sim v_2$, $\dots$, $v_n \sim v'$. We will only consider connected graphs for the duration of this work. 

We will also consider a weight function on the edges between vertices, written $w:V\times V \to [0,\infty)$, such that for all $v,v'\in V$ we have $w(v,v') = w(v',v)$ and $w(v,v') > 0$ if and only if $v \sim v'$. This then leads to the notion of a weighted graph, written as the triple $G = (V,E,w)$. The weight function also naturally extends to the notion of the measure (or sometimes weight) of a vertex, $m:V \to [0,\infty]$, defined by
\begin{equation} \label{VertexMeasure}
	m(v) := \sum_{v'\in V} w(v,v') = \sum_{v\sim v'} w(v,v').
\end{equation} 
Throughout this work we will only consider graphs and weight functions such that $m(v) < \infty$ for all $v \in V$. The weight function then leads to the definition of an operator acting on the graph. 

\begin{defn} \label{def:Laplacian} 
	For any function $f:V \to \mathbb{C}$ acting on the vertices of $G = (V,E,w)$, we define the {\bf graph Laplacian} to be the linear operator $L_G$ acting on these functions by
	\begin{equation} \label{NormLap}
		L_Gf(v) = \frac{1}{m(v)}\sum_{v\sim v'} w(v,v')(f(v') - f(v)). 
	\end{equation} 
\end{defn}  

Natural spatial settings for the graph Laplacian operator of Definition $\ref{def:Laplacian}$ are the sequence spaces 
\begin{equation} \label{ellpSpace}
	\ell^p(V) = \{f:V \to \mathbb{C}\ |\ \sum_{v\in V} |f(v)|^p < \infty\},	
\end{equation}  
for any $p\in [1,\infty)$. The vector space $\ell^p(V)$ become a Banach space when equipped with the norm
\begin{equation} 
	\|f\|_p := \bigg(\sum_{v\in V} |f(v)|^p\bigg)^{\frac{1}{p}}.
\end{equation} 
We may also consider the Banach space $\ell^\infty(V)$, the vector space of all uniformly bounded functions $f:V\to \mathbb{C}$ with norm given by
\begin{equation} \label{ellInfty}
	\|f\|_\infty := \sup_{v\in V} |f(v)|.
\end{equation}
One often writes the elements of the sequence spaces in the alternate form as sequences indexed by the elements of $V$, $f = \{f_v\}_{v\in V}$, where $f_v := f(v)$. Also, it should be noted that these definitions extend to any countable index set $V$, independent of a respective graph. 

Throughout this work we will also consider bounded linear operators acting between these sequence spaces. That is, consider a bounded linear operator $T:\ell^p(V) \to \ell^q(V)$ for some $1 \leq p,q \leq \infty$. We denote the norm of this operator as 
\begin{equation} \label{ellpOpNorm}
	\|T\|_{p \to q} := \sup_{0 \neq f \in \ell^p(V)} \frac{\|Tf\|_q}{\|f\|_p}.
\end{equation} 
One should note that there are many different, but equivalent, versions of this norm which one may work with. We now wish to provide sufficient conditions for which a graph Laplacian defines a bounded linear operator acting from $\ell^p(V)$ back into itself, for every $p \in [1,\infty]$. Before doing so, we remark that for a vertex $v \in V$ we define the degree to be
\begin{equation}
	{\rm Deg}(v) = \#\{v'\in V |\ v\sim v'\},
\end{equation}
where $\#\{\cdot\}$ represents the cardinality of the set. We use this definition to provide the following results. 

\begin{lem} \label{lem:NormLaplacian}
	Let $G = (V,E,w)$ be a weighted graph. If there exists finite $D > 0$ such that ${\rm Deg}(v) \leq D$ for all $v \in V$ then the graph Laplacian $(\ref{NormLap})$ defines a bounded linear operator on $\ell^p(V)$ for all $p\in [1,\infty]$.	
\end{lem}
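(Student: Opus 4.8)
The plan is to exploit the algebraic identity that decomposes the graph Laplacian into a Markov operator minus the identity. Writing out $(\ref{NormLap})$ and using $m(v) = \sum_{v\sim v'} w(v,v')$ from $(\ref{VertexMeasure})$, the diagonal term collapses, giving $L_G = P - I$, where $P$ is the transition (random-walk) operator
\begin{equation}
	[Pf](v) = \frac{1}{m(v)}\sum_{v\sim v'} w(v,v') f(v').
\end{equation}
Since the identity is bounded on every $\ell^p(V)$ with norm one, by the triangle inequality $\|L_G\|_{p\to p} \le \|P\|_{p\to p} + 1$, so it suffices to show $P$ is bounded on $\ell^p(V)$ for all $p \in [1,\infty]$. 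I would treat $P$ as an operator with nonnegative kernel $K(v,v') := w(v,v')/m(v)$ for $v\sim v'$ and $K(v,v')=0$ otherwise, and control its row sums and column sums separately.

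The row sums are immediate: for each fixed $v$ one has $\sum_{v'\in V} K(v,v') = m(v)^{-1}\sum_{v\sim v'} w(v,v') = 1$. This yields the endpoint bound $\|P\|_{\infty\to\infty}\le 1$, because $|[Pf](v)| \le \sum_{v'} K(v,v')\,|f(v')| \le \|f\|_\infty \sum_{v'} K(v,v') = \|f\|_\infty$.

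The crux, and the only place the hypothesis enters, is the column-sum estimate. For a fixed $v'$, only its at most ${\rm Deg}(v')\le D$ neighbours $v$ contribute to $\sum_v K(v,v')$. Moreover, for each such neighbour $v$, the weight $w(v,v')$ is one of the nonnegative summands defining $m(v)$, so $m(v)\ge w(v,v')$ and hence $K(v,v') = w(v,v')/m(v) \le 1$. Combining these gives the uniform bound $\sum_{v\in V} K(v,v') = \sum_{v\sim v'} w(v,v')/m(v) \le {\rm Deg}(v') \le D$. Swapping the order of summation via Tonelli's theorem then produces the $\ell^1$ endpoint estimate
\begin{equation}
	\|Pf\|_1 \le \sum_{v'\in V} |f(v')| \sum_{v\in V} K(v,v') \le D\,\|f\|_1,
\end{equation}
so that $\|P\|_{1\to 1} \le D$.

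With the endpoint bounds $\|P\|_{1\to1}\le D$ and $\|P\|_{\infty\to\infty}\le 1$ in hand, I would conclude by interpolation: the Schur test (equivalently, Riesz--Thorin interpolation) gives $\|P\|_{p\to p}\le D^{1/p}\le D$ for every $p\in[1,\infty]$, whence $\|L_G\|_{p\to p}\le D+1 < \infty$ for all such $p$, which is the claim. I expect the column-sum bound to be the main obstacle, since it requires \emph{both} that each vertex has boundedly many neighbours and that every normalized weight $w(v,v')/m(v)$ is at most one; the degree hypothesis ${\rm Deg}(v)\le D$ cannot be dropped, as otherwise a vertex adjacent to unboundedly many low-measure vertices would destroy the $\ell^1$ (and hence $\ell^p$ for $p<\infty$) boundedness. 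The remaining steps follow the standard $\ell^1$--$\ell^\infty$ interpolation template for Markov kernels.
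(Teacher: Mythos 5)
Your proof is correct, and it follows the same overall template as the paper's --- establish the two endpoint bounds on $\ell^1(V)$ and $\ell^\infty(V)$, then interpolate to the intermediate exponents --- but the execution is genuinely different and slightly sharper. The paper works directly with the pointwise estimate $|L_Gf(v)| \le |f(v)| + \sum_{v\sim v'}|f(v')|$, obtained by replacing every normalized weight $w(v,v')/m(v)$ by $1$, and then invokes the degree bound at \emph{both} endpoints, arriving at $\|L_G\|_{1\to 1}, \|L_G\|_{\infty\to\infty} \le D+1$ before interpolating. You instead decompose $L_G = P - I$ (which is valid here, including in the presence of loops, since the loop term vanishes in the Laplacian while the row normalization is unaffected) and run a Schur test on the kernel $K(v,v') = w(v,v')/m(v)$: the row sums are exactly $1$ by stochasticity, giving $\|P\|_{\infty\to\infty}\le 1$ with no degree hypothesis at all, while the column sums are controlled by $\#\{v : v\sim v'\} = {\rm Deg}(v') \le D$ together with $w(v,v')\le m(v)$. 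This buys you two things the paper's cruder estimate does not: a degree-free $\ell^\infty$ bound ($\|L_G\|_{\infty\to\infty}\le 2$ rather than $D+1$), and a precise identification of where the hypothesis ${\rm Deg}(v)\le D$ actually enters --- only in the column ($\ell^1$) direction, consistent with your closing observation that a vertex adjacent to unboundedly many low-measure vertices destroys $\ell^1$ boundedness but not $\ell^\infty$ boundedness. Both arguments close with the same interpolation step (Riesz--Thorin, equivalently the Schur test you cite), which the paper imports from the sequence-space literature.
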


\begin{proof}
	Prior to working with a specific sequence space let us begin by noticing that from the definition of the measure $(\ref{VertexMeasure})$ we have
	\begin{equation}
		\frac{w(v,v')}{m(v)} \leq \sum_{v\sim v'} \frac{w(v,v')}{m(v)} = 1,
	\end{equation} 
	for all $v,v' \in V$. Hence,
	\begin{equation} \label{LNormInequality}
		\begin{split}
			|L_Gf(v)| &\leq \frac{1}{m(v)}\sum_{v\sim v'} w(v,v')(|f(v')| + |f(v)|) \\
			&= |f(v)| + \sum_{v\sim v'} \frac{w(v,v')}{m(v)}|f(v')|, 
		\end{split}	
	\end{equation}
	for all $v \in V$. One may apply H\"older's Inequality to find 
	\begin{equation}
		\sum_{v\sim v'} \frac{w(v,v')}{m(v)}|f(v')| \leq \bigg(\sup_{v' \in V} \frac{w(v,v')}{m(v)}\bigg) \bigg(\sum_{v\sim v'} |f(v')|\bigg) \leq \sum_{v\sim v'} |f(v')|,	
	\end{equation}
	for all $v \in V$. Combining this with $(\ref{LNormInequality})$ gives
	\begin{equation} \label{LNormInequality2}
		|L_Gf(v)| \leq |f(v)| + \sum_{v\sim v'} |f(v')|. 	
	\end{equation}
	We now use $(\ref{LNormInequality2})$ to demonstrate boundedness of the operator $L$ on the sequence spaces, starting with $\ell^1(V)$ and $\ell^\infty(V)$.
	
	$\underline{L_G:\ell^1(V) \to \ell^1(V)}$: Let $f \in \ell^1(V)$. Summing $|L_Gf(v)|$ over all $v \in V$ and applying $(\ref{LNormInequality2})$ gives
	\begin{equation}
		\begin{split}
			\|L_Gf\|_1 &= \sum_{v \in V} |Lf(v)| \\
				 &\leq  \sum_{v \in V} |f(v)| + \sum_{v \in V} \sum_{v\sim v'} |f(v')| \\ 
				 &\leq \|f\|_1 + D\|f\|_1 \\
				 & = (D+1)\|f\|_1.
		\end{split} 
	\end{equation}
	This therefore shows that $\|L_G\|_{1\to 1} \leq D+1$.
	
	$\underline{L_G:\ell^\infty(V) \to \ell^\infty(V)}$: Let $f \in \ell^\infty(V)$. Taking the supremum of $|L_Gf(v)|$ over all $v \in V$ and applying $(\ref{LNormInequality2})$ gives
	\begin{equation}
		\begin{split}
			\|L_Gf\|_\infty &= \sup_{v \in V} |Lf(v)| \\
				&\leq \sup_{v \in V} |f(v)| + \sum_{v\sim v'} |f(v')| \\
				&\leq (D+1) \|f\|_\infty.
		\end{split}
	\end{equation}
	Hence, $\|L_G\|_{\infty \to \infty} \leq D+1$. 
	 
	Now that we have shown that $L_G$ is bounded on both $\ell^1$ and $\ell^\infty$, standard interpolation result over the $\ell^p(V)$ spaces imply that $L_G:\ell^p(V) \to \ell^p(V)$ are uniformly bounded for all $1 < p < \infty$ (see for example Exercise $12$ of $\S 2.6$ from $\cite{InfiniteMatrices}$). This completes the proof.
\end{proof}

\subsection{Graphs as Metric Measure Spaces} 

In this section we extend some of the notions introduced about graphs and provide the necessary nomenclature to introduce random walks on graphs. We saw that for a weighted graph, $G = (V,E,w)$, we define the measure of a vertex as in $(\ref{VertexMeasure})$. This notion extends to the volume of a subset, $V_0 \subset V$, by defining 
\begin{equation}
	{\rm Vol}(V_0) := \sum_{v \in V_0} m(v).
\end{equation} 
Hence, under this definition of volume, a weighted graph naturally becomes a measure space on the $\sigma$-algebra given by the power set of $V$.  

Graphs also have a natural underlying metric, $\rho$, given by the function which returns the smallest number of edges to produce a path between two vertices $v,v' \in V$. Note that since $G$ is assumed connected, the distance function is well-defined. This metric allows for the consideration of a ball of radius $r \geq 0$ centred at the vertex $v \in V$, denoted by
\begin{equation}
	B(v,r) := \{v'\ |\ \rho(v,v') \leq r\}.
\end{equation} 
In this work we will write ${\rm Vol}(v,r)$ to denote ${\rm Vol}(B(v,r))$. The combination of the graph metric and the vertex measure allows one to interpret a weighted graph as a {\em metric-measure space}.  

We provide a series of definitions to further our understanding of graphs as metric-measure spaces. 

\begin{defn} \label{def:VolumeGrowth} 
	The weighted graph $G = (V,E,w)$ satisfies a {\bf uniform polynomial volume growth} condition of order $d$, abbreviated VG(d), if there exists $d > 0$ and $c_{vol,1},c_{vol,2} > 0$ such that
	\begin{equation}
		c_{vol,1}r^d \leq {\rm Vol}(v,r) \leq c_{vol,2}r^d,
	\end{equation}
	for all $v \in V$ and $r \geq 0$.
\end{defn}

In some cases one may also consider graphs with more general volume growth conditions, but for the purposes of this work we will restrict ourselves to polynomial growth conditions. The characteristic examples of graphs satisfying $VG(d)$ are the integer lattices $\mathbb{Z}^d$ with all edges of weight $1$ such that there exists an edge between two vertices $n,n' \in \mathbb{Z}^d$ if and only if $\|n - n'\|_1 = 1$. This is pointed out on, for example, page $10$ of $\cite{BarlowCoulhonGrigoryan}$. For the duration of this work we will restrict our attention to those graphs satisfying VG(d) with $d\geq 2$. 

\begin{defn} \label{def:Delta} 
	We say $G = (V,E,w)$ satisfies the {\bf local elliptic property}, denoted $\Delta$, if there exists an $\alpha > 0$ such that
	\begin{equation}
		w(v,v') \geq \alpha m(v)
	\end{equation}
	for all $v,v' \in V$ such that $v' \sim v$.
\end{defn}

It appears that the notation of using $\Delta$ to denote this local elliptic property has become commonplace in the literature, and therefore we use this notation for consistency. The following lemma points out an important set of sufficient conditions to satisfy $\Delta$.

\begin{lem}\label{lem:Delta} 
	Let $G = (V,E,w)$ be a weighted graph. If there exists constants $D,w_{min}, w_{max} > 0$ such that 
	\begin{equation}
		w_{min} \leq w(v,v') \leq w_{max},
	\end{equation}
	and ${\rm deg}(v) \leq D$ for all $v \in V$ and $v \sim v'$, then $G$ satisfies $\Delta$. 
\end{lem}

\begin{proof}
	For all $v \in V$ we have
	\begin{equation}
		m(v) = \sum_{v \sim v'} w(v,v') \leq Dw_{max}.
	\end{equation}
	This gives
	\begin{equation}
		\frac{w(v,v')}{m(v)} \geq \frac{w_{min}}{Dw_{max}} > 0.
	\end{equation} 
	Thus, $G = (V,E,w)$ will satisfy $\Delta$ for any $\alpha > 0$ such that $w_{min} \geq \alpha Dw_{max}$.
\end{proof}

\begin{defn} 
	The weighted graph $G = (V,E,w)$ satisfies the {\bf Poincar\'e inequality}, abbreviated PI, if there exists a $C_{PI} > 0$ such that
	\begin{equation}
		\sum_{v \in B(v_0,r)} m(v)|f(v) - f_B(v_0)|^2 \leq C_{PI} r^2\bigg( \sum_{v,v' \in B(v_0,2r)} w(v,v')(f(v) - f(v'))^2\bigg),
	\end{equation}	
	for all functions $f:V \to \mathbb{R}$, all $v_0 \in V$, all $r > 0$, where
	\begin{equation}
		f_B(v_0) = \frac{1}{{\rm Vol}(v_0,r)} \sum_{v\in B(v_0,r)} m(v)f(v).
	\end{equation}
\end{defn}

The Poincar\'e Inequality is certainly the most difficult of the three definitions to work with. In practice it can be quite difficult to confirm whether or not a weighted graph satisfies $PI$, although some methods to obtain this inequality are given in $\cite{PoincareInequalities}$. Next we will introduce an important definition and result from $\cite{RoughIsometry}$ that can aid in determining if a graph satisfies $PI$. 

\begin{defn} \label{def:RoughIsometry} 
	Let $G = (V,E,w)$ and $G' = (V',E',w')$ be two infinite weighted graphs satisfying $\Delta$ with respective graph metrics given by $\rho$ and $\rho'$. A map $T: V \to V'$ is called a {\bf rough isometry} if there exists $a,c > 1$, $b > 0$ and $M > 0$ such that
	\begin{subequations}
		\begin{equation} \label{Rough1} 
				a^{-1}\rho(v_1,v_2) - b \leq \rho'(T(v_1),T(v_2)) \leq a\rho(v_1,v_2) + b, \ \ \ \ \ \forall v_1,v_2\in V, 
		\end{equation}	
		\begin{equation} \label{Rough2} 
			\rho'(T(V),v') \leq M, \ \ \ \ \ \forall v'\in V',
		\end{equation}
		\begin{equation} \label{Rough3} 
			c^{-1}m(v) \leq m'(T(v)) \leq cm(v), \ \ \ \ \ \forall v\in V,	
		\end{equation}
	\end{subequations}
	where $m$ and $m'$ are the vertex measures associated to the graph $G$ and $G'$, respectively. If $T: G \to G'$ is a rough isometry, $G$ and $G'$ are said to be {\bf rough isometric}. 
\end{defn}

\begin{prop}[{\em \cite{RoughIsometry}, \S 5.3, Proposition 5.15(2)}] \label{prop:RoughInvariance} 
	Let $G$ and $G'$ be two infinite weighted graphs satisfying $\Delta$ that are rough isometric. Then if there exists a $d > 0$ such that $G$ satisfies $VG(d)$ and $PI$, then $G'$ satisfies $VG(d)$ and $PI$ as well. 
\end{prop}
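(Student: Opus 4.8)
The plan is to verify the two conclusions separately, first the volume growth $VG(d)$ and then the Poincar\'e inequality $PI$, exploiting that the hypothesis $\Delta$ on \emph{both} graphs supplies several rigidities for free. Before anything else I would record that $\Delta$ forces bounded degree: from $w(v,v') \geq \alpha m(v)$ and $m(v) = \sum_{v \sim v'} w(v,v')$ one gets $m(v) \geq \alpha\,{\rm Deg}(v)\,m(v)$, hence ${\rm Deg}(v) \leq 1/\alpha$ on $G$, and likewise on $G'$. This immediately yields (i) bounded fibres for $T$: if $T(v_1)=T(v_2)$ then (\ref{Rough1}) gives $\rho(v_1,v_2) \leq ab$, so every fibre sits in a ball of radius $ab$, which by bounded degree holds at most $N_0 = N_0(a,b,\alpha)$ vertices; and (ii) comparability of neighbouring measures, since $\alpha m(v) \leq w(v,v') \leq m(v')$ and its symmetric counterpart force $\alpha \leq m(v')/m(v) \leq \alpha^{-1}$ across each edge, and hence a factor $\alpha^{-k}$ along any path of length $k$. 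Using (\ref{Rough2}) I would also fix a coarse inverse $S:V' \to V$ by choosing, for each $v'$, a vertex with $\rho'(T(Sv'),v') \leq M$, and check that $S$ is itself a rough isometry; this symmetrizes the problem so that it suffices to push each property across $T$ in whichever direction is convenient.

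For $VG(d)$ I would estimate ${\rm Vol}'(v',r)$ from both sides by a volume in $G$. Fixing $v = S(v')$, any $u' \in B'(v',r)$ lies within $M$ of some $T(w)$, and (\ref{Rough1}) then confines $w$ to a ball $B(v,R)$ with $R = a(r + 2M + b)$. Grouping the vertices of $B'(v',r)$ by their associated $w$, using bounded degree in $G'$ to bound the number of $u'$ attached to each $w$, the neighbouring-measure comparability to replace $m'(u')$ by $m'(T(w))$ up to a constant, and finally (\ref{Rough3}) to pass to $m(w)$, I obtain ${\rm Vol}'(v',r) \leq C\,{\rm Vol}(v,R) \leq C' R^d \asymp r^d$. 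The matching lower bound comes from the reverse inclusion: $T$ maps a ball $B(v,r')$ with $r' \asymp r$ into $B'(v',r)$, and the bounded fibre count $N_0$ together with (\ref{Rough3}) give ${\rm Vol}'(v',r) \gtrsim {\rm Vol}(v,r') \gtrsim r^d$. Since the radii scale linearly, $G'$ inherits $VG(d)$ with the same exponent $d$.

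The genuinely hard part is transferring $PI$, because it is an analytic rather than a purely geometric statement. The strategy is to pull a test function $f':V' \to \mathbb{R}$ back to $f = f'\circ T$ on $G$, apply $PI$ on $G$, and then convert each term back. The oscillation (left-hand) terms are controlled by the $VG(d)$-comparability of volumes and of ball averages, but the delicate point is the Dirichlet energy: $T$ sends an edge of $G'$ not to an edge of $G$ but, via $S$, to a pair of vertices at bounded distance joined by a path of bounded length. Along such a path the increment $(f'(u') - f'(u''))^2$ telescopes and is dominated, by Cauchy--Schwarz, by a bounded multiple of the sum of edge energies encountered; summing over edges with bounded overlap multiplicity shows the two Dirichlet forms are comparable up to a constant, at the cost of enlarging the radius multiple on the right-hand ball from $2r$ to some $Kr$.

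I expect the main obstacle to be precisely this last adjustment: reducing a Poincar\'e inequality with an enlarged ball $B(v_0,Kr)$ on the right back to one over $B(v_0,2r)$ is not formal and requires a chaining (Whitney-type) argument that leans on volume doubling, which is exactly the mechanism that makes $VG + PI$ a rough-isometry invariant rather than either piece alone. An attractive alternative, if the chaining becomes unwieldy, is to route through the characterization used in \cite{Delmotte}: under $\Delta$ the pair $VG(d)+PI$ is equivalent to a parabolic Harnack inequality, and since the latter is phrased purely through the large-scale metric-measure structure it is manifestly stable under rough isometry, giving the conclusion for $G'$ upon translating back through $S$.
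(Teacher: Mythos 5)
First, a point of comparison: the paper does not prove Proposition~\ref{prop:RoughInvariance} at all --- it is imported verbatim from \cite{RoughIsometry} (Proposition 5.15(2) of Hambly--Kumagai), and the only remark the text offers is that the $VG(d)$ half can be checked ``in a straightforward way'' using (\ref{Rough3}). So your proposal cannot be matched against an internal argument; it has to be judged as a reconstruction of the literature proof, and on that score it follows the standard route: extract bounded degree and edge-wise measure comparability from $\Delta$, build a coarse inverse $S$ from (\ref{Rough2}), transfer volumes two-sidedly, and transfer the Poincar\'e inequality by pulling test functions back, telescoping increments along bounded-length paths, and controlling overlap multiplicities. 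Your $VG(d)$ argument is correct and essentially complete; note that it genuinely needs (\ref{Rough1}), (\ref{Rough2}), and $\Delta$ on both graphs, not only (\ref{Rough3}) as the paper's remark loosely suggests, so your accounting of the hypotheses is more honest than the paper's.

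Two caveats on the $PI$ half. First, the step you flag as the main obstacle is indeed where all the remaining work lives: the pullback-and-telescope argument produces Dirichlet energy over an enlarged ball $B(v_0,Kr)$ with $K$ depending on $a$, $b$, $M$ and $\alpha$, and improving $Kr$ back to $2r$ is a genuine theorem (a Whitney-covering/chaining argument under volume doubling, i.e.\ Jerison's lemma in the graph setting), not a formality. Naming the correct tool makes this an accurate proof plan, but as written the proof is incomplete precisely at its hardest point. Second, the proposed fallback through the parabolic Harnack inequality is circular: PHI is a statement about solutions of the discrete heat equation, not a ``manifestly'' metric-measure quantity, and its invariance under rough isometries is classically \emph{derived} from the equivalence of PHI with $VG(d)$ plus $PI$ (Delmotte's theorem \cite{Delmotte}) together with the very transfer statement you are trying to prove. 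If the chaining argument is to be avoided, the honest alternative is the one the paper itself takes: cite \cite{RoughIsometry}.
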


Hambly and Kumagai's original statement of Proposition $\ref{prop:RoughInvariance}$ refers to our $PI$ as a {\em weak} Poincar\'e inequality since they sometimes use a stronger inequality in their work. Hambly and Kumagai also originally provide their statement in terms of a more general volume growth condition, but we will work with the weaker version stated here since we are only interested in polynomial volume growth. In fact, using property $(\ref{Rough3})$ one can show that property $VG(d)$ is preserved under rough isometries in a straightforward way.

\subsection{Random Walks on Weighted Graphs} \label{subsec:RandomWalks} 

Our interest here will be in continuous time random walks on the vertices of a weighted graph $G = (V,E,w)$. One can interpret this as being at a single vertex on the graph, and then waiting an exponentially distributed amount of time to move along an edge to another vertex of the graph. Upon arriving at the next vertex, this process begins again by waiting an exponentially distributed amount of time to move along an edge to another vertex of the graph. The weights in this scenario act as preferences to moving along a certain edge; the greater the weight, the greater the preference. More precisely, if we are at the vertex $v \in V$, then the probability we move to $v' \sim v$ is given by $w(v,v')/m(v)$. The important thing to note here is that there are exactly two probabilities involved: the probability of when to move from one vertex to the next and the probability of choosing the vertex to move to. We use the notation $p_t(v,v')$ to denote the probability that after time $t \geq 0$ we have arrived at the vertex $v'$ having started at vertex $v$. By definition we have
\begin{equation} \label{ProbSum}
	\sum_{v'\in V} p_t(v,v') = 1
\end{equation} 
for all $v \in V$. Delmotte $\cite{Delmotte}$ points out that the $p_t(\cdot,\cdot)$ are not necessarily symmetric in their arguments due to the weights on the graph, but it has been shown that
\begin{equation}
	\frac{p_t(v,v')}{m(v')} = \frac{p_t(v',v)}{m(v)}.
\end{equation} 
This has prompted some authors $\cite{BernicotCoulhonFrey,RoughIsometry,Horn}$ to instead study the symmetric transition densities 
\begin{equation} \label{qDensity}
	q_t(v,v') := \frac{p_t(v,v')}{m(v')}
\end{equation}
for all $v,v'\in V$.

Much work has been done to understand the long-time behaviour of the probabilities $p_t(\cdot,\cdot)$, notably the pioneering work of Delmotte $\cite{Delmotte}$. Most applicable to our present situation is that these probabilities are used to understand the solution to the spatially discrete heat equation 
\begin{equation} \label{NormalizedLap}
	\dot{x}_v = \frac{1}{m(v)}\sum_{v' \in V} w(v,v')(x_{v'} - x_v),\ \ \ \ \ v\in V.
\end{equation}
When considering all elements $\{x_v\}_{v\in V}$, the right hand side of $(\ref{NormalizedLap})$ is a graph Laplacian operator, again denoted $L_G$. Then as stated in Theorem $23$ of $\cite{KellerLenz}$, $L_G$ is the infinitesimal generator of the semigroup $P_t = e^{L_Gt}$. For an initial condition $x_0 = \{x_{v,0}\}_{v \in V}$ the fundamental solution to $(\ref{NormalizedLap})$ with this initial condition is given by
\begin{equation} \label{HeatSoln}
	x_v(t) = [P_tx_0]_v = \sum_{v' \in V} p_t(v,v')x_{v',0} 
\end{equation}   
for each $v \in V$, thus showing the connection between the probabilities $p_t(\cdot,\cdot)$ and the semigroup $P_t$. The fact that $(\ref{HeatSoln})$ solves $(\ref{NormalizedLap})$ was pointed out by Delmotte, and other sources include, but are not limited to, $\cite{Horn,Weber}$ for continuous time transitions and $\cite{Grigoryan1,Grigoryan2}$ for discrete time transitions. One also can see using the identity $(\ref{qDensity})$, the solution $(\ref{HeatSoln})$ now can be interpreted as the Lebesgue integral
\begin{equation}
	[P_tx_0]_v = \sum_{v' \in V} q_t(v,v')x_{v',0}m(v'),	
\end{equation}
for the symmetric $q_t$'s and the initial condition over a discrete space with respect to the measure $m: V \to [0,\infty)$. 

\begin{prop}[{\em \cite{Delmotte}, \S 3.1, Proposition 3.1}] \label{prop:Delmotte}
	Assume there exists $d > 0$ such that the weighted graph $G = (V,E,w)$ satisfies $VG(d)$, $PI$ and $\Delta$. Then for all $v,v' \in V$ and $t \geq 0$ there exists a constant $C_0 > 0$ independent of $v,v'$ and $t$ such that
	\begin{equation} \label{DelmotteUpper}
		p_t(v,v') \leq C_0m(v')t^{-\frac{d}{2}}. 
	\end{equation}   	
\end{prop}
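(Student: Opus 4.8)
The plan is to prove the on-diagonal bound $(\ref{DelmotteUpper})$ by the classical Nash method, deducing it from a global Nash functional inequality that is in turn extracted from the hypotheses $VG(d)$, $PI$ and $\Delta$. Throughout I would work with the symmetric transition densities $q_t(v,v') = p_t(v,v')/m(v')$ of $(\ref{qDensity})$, whose advantage is that the semigroup $P_t = e^{L_G t}$ is self-adjoint on the weighted space $L^2(V,m)$ carrying the inner product $\langle f,g\rangle_m = \sum_{v\in V} f(v)g(v)m(v)$, and acts through $q_t$ via $[P_tf](v) = \sum_{v'\in V} q_t(v,v')f(v')m(v')$. In particular $q_t$ is exactly the integral kernel of $P_t$ with respect to $m$, so its pointwise size is controlled by the operator norm of $P_t$ from $L^1(V,m)$ to $L^\infty(V,m)$: for a positive symmetric kernel one has $\sup_{v,v'} q_t(v,v') = \|P_t\|_{L^1(m)\to L^\infty(m)}$. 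Since $(\ref{DelmotteUpper})$ is precisely the statement $q_t(v,v') \leq C_0 t^{-d/2}$ after multiplying through by $m(v')$, the whole problem reduces to the ultracontractive estimate $\|P_t\|_{L^1(m)\to L^\infty(m)} \leq C_0 t^{-d/2}$.

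For the energy bookkeeping I would record two structural facts. First, because $p_t \geq 0$ and $\sum_{v'} p_t(v,v') = 1$ by $(\ref{ProbSum})$, the semigroup $P_t$ is Markovian and hence a contraction on $L^1(V,m)$, so $\|P_tf\|_{L^1(m)} \leq \|f\|_{L^1(m)}$. Second, writing $\mathcal{E}(f,f) = -\langle L_Gf,f\rangle_m = \tfrac{1}{2}\sum_{v,v'\in V} w(v,v')(f(v)-f(v'))^2$ for the Dirichlet form of $L_G$, one has the standard energy identity $\tfrac{d}{dt}\|P_tf\|_{L^2(m)}^2 = -2\mathcal{E}(P_tf,P_tf)$ for $f$ in the domain of the generator.

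The key analytic input, and the step I expect to be the main obstacle, is upgrading the local (ball-wise) Poincar\'e inequality $PI$ together with the polynomial volume growth $VG(d)$ into a global Nash inequality of the form
\[
	\|f\|_{L^2(m)}^{2+4/d} \leq C\,\mathcal{E}(f,f)\,\|f\|_{L^1(m)}^{4/d}
\]
valid for all finitely supported $f:V\to\mathbb{R}$. This is precisely where the three hypotheses must cooperate: $VG(d)$ and $PI$ combine to yield a relative Faber--Krahn inequality on balls, bounding the bottom Dirichlet eigenvalue of a subset $U\subseteq B(v_0,r)$ from below by a multiple of $r^{-2}(\mathrm{Vol}(v_0,r)/\mathrm{Vol}(U))^{2/d}$, while the local elliptic property $\Delta$ furnishes the uniform comparison of edge weights to vertex measures needed for the covering and chaining argument that globalizes the ball estimate. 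Converting such a Faber--Krahn estimate into the Nash inequality above is the discrete, weighted-graph instance of the Grigor'yan--Coulhon--Saloff-Coste equivalence between $VG+PI$ and ultracontractivity, and carrying it out with constants uniform in $v_0$ and $r$ is the technical heart of the argument.

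Granting the Nash inequality, the proof closes with Nash's differential-inequality argument followed by duality. Fixing a nonzero finitely supported $f$ and setting $u(t) = \|P_tf\|_{L^2(m)}^2$, the energy identity gives $u'(t) = -2\mathcal{E}(P_tf,P_tf)$; feeding in the Nash inequality applied to $P_tf$ together with the $L^1$-contraction $\|P_tf\|_{L^1(m)}\leq\|f\|_{L^1(m)}$ produces the autonomous inequality $u'(t) \leq -c\,u(t)^{1+2/d}\|f\|_{L^1(m)}^{-4/d}$, whose integration from $0$ yields $\|P_t\|_{L^1(m)\to L^2(m)} \leq C t^{-d/4}$. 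The semigroup property and self-adjointness then give
\[
	\|P_t\|_{L^1(m)\to L^\infty(m)} \leq \|P_{t/2}\|_{L^2(m)\to L^\infty(m)}\,\|P_{t/2}\|_{L^1(m)\to L^2(m)} = \|P_{t/2}\|_{L^1(m)\to L^2(m)}^2 \leq C\,t^{-d/2},
\]
which is exactly the ultracontractive bound sought. Reading this back through the kernel identification delivers $q_t(v,v')\leq C_0 t^{-d/2}$ and hence $(\ref{DelmotteUpper})$. Only the large-$t$ regime carries content here, since $t^{-d/2}$ is then small; for $t$ in any bounded range the crude bound $p_t(v,v')\leq 1$ coming from $(\ref{ProbSum})$ suffices and is absorbed into the final constant $C_0$.
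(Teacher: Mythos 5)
The paper itself contains no proof of this proposition: it is imported verbatim from Delmotte \cite{Delmotte}, whose argument runs through a discrete parabolic Moser iteration culminating in the parabolic Harnack inequality, with the on-diagonal upper bound falling out as a byproduct. Your Nash-method route is therefore genuinely different, and it is a legitimate one for what is actually asserted: since only the upper bound $p_t(v,v') \leq C_0 m(v') t^{-d/2}$ is claimed, the full Harnack machinery is not needed, and the chain $VG(d)+PI+\Delta \Rightarrow$ Faber--Krahn $\Rightarrow$ Nash $\Rightarrow$ ultracontractivity $\Rightarrow$ kernel bound is the standard lighter-weight alternative (Carlen--Kusuoka--Stroock, Grigor'yan, Coulhon--Saloff-Coste). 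Your reductions are all sound: the identification of $q_t$ as the kernel of $P_t$ with respect to $m$, the equality $\sup_{v,v'} q_t(v,v') = \|P_t\|_{L^1(m)\to L^\infty(m)}$ for positive symmetric kernels, the $L^1$ contraction from Markovianity, the energy identity, Nash's differential inequality, and the duality doubling $\|P_t\|_{L^1(m)\to L^\infty(m)} \leq \|P_{t/2}\|_{L^1(m)\to L^2(m)}^2$. Two caveats. First, the step you rightly flag as the technical heart --- extracting a global Nash (or Faber--Krahn) inequality with uniform constants from the ball-wise $PI$ and $VG(d)$, with $\Delta$ entering the covering argument --- is asserted rather than executed; this is precisely where the work that Delmotte spends on Moser iteration would be relocated, so your proposal is an outline of an alternative proof rather than a complete one. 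Second, your closing remark that bounded $t$ is handled by $p_t(v,v')\leq 1$ implicitly requires $m(v')$ to be bounded below; this does follow from $VG(d)$ (let $r \uparrow 1$ in $c_{vol,1}r^d \leq {\rm Vol}(v',r) = m(v')$), but it should be said. Finally, a point of architecture: the paper also needs the gradient-type estimate $(\ref{NeighbourDecay})$, which it obtains from the parabolic Harnack inequality that Delmotte's approach establishes en route; the Nash route yields $(\ref{DelmotteUpper})$ but not that second estimate, so within this paper the heavier Harnack machinery is not redundant.
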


Delmotte proves a much stronger version of Proposition $\ref{prop:Delmotte}$ under more general volume growth conditions that applies to a diverse range of graphs, but for our purposes we work with Proposition $\ref{prop:Delmotte}$ as it is stated here. Delmotte also goes further to prove that the assumptions of Proposition $\ref{prop:Delmotte}$ are equivalent to a Parabolic Harnack Inequality, which we do not explicitly state here because it will not be necessary to our result. What is important to note though is that Theorem $2.32$ of $\cite{GyryaSaloffCoste}$ dictates that any graph (or more generally metric space) satisfying the Parabolic Harnack Inequality further satisfies the estimate
\begin{equation} \label{NeighbourDecay}
	|p_t(v_1,v_3) - p_t(v_2,v_3)| \leq Cm(v_3) \bigg(\frac{d(v_1,v_2)}{\sqrt{t}}\bigg)^\eta p_{2t}(v_1,v_3) 
\end{equation}
for all $v_1,v_2,v_3 \in V$ and some independent $C, \eta > 0$. Thus, we may assume that when the conditions of Proposition $\ref{prop:Delmotte}$ are satisfied, then so must be $(\ref{NeighbourDecay})$.

\begin{cor} \label{cor:InfDecay} 
	Let $G = (V,E,w)$ be a weighted graph satisfying the assumptions of Proposition $\ref{prop:Delmotte}$. If there exists an $M > 0$ such that $m(v) \leq M$ for all $v \in V$, then for any $x_0 \in \ell^1(V)$ there exists a constant $C > 0$ such that
	\begin{equation} \label{Contraction1}
		\|P_t\|_{1 \to \infty} \leq Ct^{-\frac{d}{2}}.
	\end{equation}
\end{cor}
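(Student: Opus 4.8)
The plan is to exploit the explicit representation of the semigroup $P_t$ through the transition probabilities $p_t(\cdot,\cdot)$ recorded in (\ref{HeatSoln}), and to feed the pointwise upper bound of Proposition \ref{prop:Delmotte} directly into the definition (\ref{ellpOpNorm}) of the $\ell^1 \to \ell^\infty$ operator norm. Since every hypothesis required by Proposition \ref{prop:Delmotte} is in force, the decay in $t$ comes for free from that estimate, while the uniform bound $m(v) \leq M$ is exactly what converts the vertex-dependent factor $m(v')$ into a constant that can be pulled outside the sum.

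First I would fix $x_0 \in \ell^1(V)$ and a vertex $v \in V$, and apply the triangle inequality to (\ref{HeatSoln}) to obtain
\begin{equation*}
	|[P_t x_0]_v| \leq \sum_{v' \in V} p_t(v,v') |x_{v',0}|.
\end{equation*}
Next I would insert the bound $p_t(v,v') \leq C_0 m(v') t^{-\frac{d}{2}}$ supplied by Proposition \ref{prop:Delmotte} and use $m(v') \leq M$ to replace $m(v')$ by $M$, yielding the uniform estimate $p_t(v,v') \leq C_0 M t^{-\frac{d}{2}}$ with a constant independent of $v$, $v'$, and $t$. This uniform bound both justifies the absolute convergence of the series and permits factoring it out of the sum, leaving $\sum_{v' \in V} |x_{v',0}| = \|x_0\|_1$.

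Carrying this through gives $|[P_t x_0]_v| \leq C_0 M t^{-\frac{d}{2}} \|x_0\|_1$ for every $v \in V$. Taking the supremum over $v$ produces $\|P_t x_0\|_\infty \leq C_0 M t^{-\frac{d}{2}} \|x_0\|_1$, and dividing by $\|x_0\|_1$ and taking the supremum over nonzero $x_0 \in \ell^1(V)$ as in (\ref{ellpOpNorm}) delivers the claimed bound with $C = C_0 M$. The argument is almost entirely mechanical, so I do not anticipate a genuine obstacle; the only point that deserves honest bookkeeping is which hypothesis does which job. The graph conditions $VG(d)$, $PI$, and $\Delta$ furnish the $t^{-\frac{d}{2}}$ decay through Delmotte's estimate, whereas the additional assumption $m(v) \leq M$ is precisely what decouples that estimate from the geometry and makes the resulting bound uniform in the target vertex $v'$. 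One should also confirm that exchanging the supremum over $v$ with the pointwise estimate is legitimate, which it is, since the bound holds for all $v$ simultaneously.
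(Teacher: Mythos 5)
Your proposal is correct and follows essentially the same route as the paper: the paper's proof also inserts Delmotte's bound $p_t(v,v') \leq C_0 m(v') t^{-\frac{d}{2}}$ together with $m(v') \leq M$ into the representation (\ref{HeatSoln}) and pairs against $\|x_0\|_1$ (the paper phrases your factoring step as an application of H\"older's inequality), then takes the supremum over $v$. The bookkeeping you describe, with $C = C_0 M$, is exactly what the paper does.
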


\begin{proof}
	Since the conditions of Proposition $\ref{prop:Delmotte}$ are satisfied for some $d > 0$, there exists a $C_0 > 0$ such that $(\ref{DelmotteUpper})$ holds. Then apply H\"older's Inequality to the general solution $(\ref{HeatSoln})$ to find that 
	\begin{equation} \label{UltraContractive1}
		|[P_tx_0]_v| \leq C_0Mt^{-\frac{d}{2}} \|x_0\|_1.		
	\end{equation} 
	Taking the supremum over all $v \in V$ gives the desired result. 
\end{proof}

Ultracontractive properties such as that stated in Corollary $\ref{cor:InfDecay}$ of one parameter semigroups have been intensely studied, notably in the seminal work of Varopoulos who provided similar results to $(\ref{Contraction1})$ in a much more general setting, but nonetheless arrived at a similar conclusion $\cite{Varopoulus}$. 

It has been demonstrated (see, for example, page $219$ of $\cite{KellerLenz}$) that if $0 \leq x_{v,0} \leq 1$ for all $v\in V$, then $0 \leq [P_tx_0]_v \leq 1$ for all $v \in V$. The lower bound follows immediately from the positivity of the probabilities $p_t(\cdot,\cdot)$, whereas the upper bound follows from a direct application of H\"older's Inequality and the identity $(\ref{ProbSum})$. Following the comments at the beginning of Section $1.2$ of $\cite{BernicotCoulhonFrey}$, this implies that there exists a $C_{op} > 0$ for which 
\begin{equation}
	\|P_t\|_{p \to p} \leq C_{op}
\end{equation}  
for all $1 \leq p \leq \infty$. These uniform bounds and the ultracontractivity property $(\ref{UltraContractive1})$ can be extended further by the following lemma.

\begin{lem} \label{lem:Ultracontractive} 
	Assume there exists constants $C,C_{op}>0$ such that $\|P_t\|_{1\to \infty} \leq Ct^{-\frac{d}{2}}$ and $\|P_t\|_{1 \to 1} \leq C_{op}$. Then for all $1 \leq p \leq \infty$ there exists a constant $C > 0$, independent of $p$, such that
	\begin{equation} \label{UltraContractive2}
		\|P_t\|_{1 \to p} \leq C t^{-\frac{d}{2}(1 - \frac{1}{p})}.
	\end{equation}  
\end{lem}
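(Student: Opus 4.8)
The plan is to interpolate the two given endpoint bounds, $\|P_t\|_{1\to 1}\leq C_{op}$ and $\|P_t\|_{1\to\infty}\leq Ct^{-\frac{d}{2}}$, to obtain the intermediate estimate. Although this could be phrased as an application of the Riesz--Thorin interpolation theorem (interpolating between the exponent pairs $(1,1)$ and $(1,\infty)$ while holding the domain exponent fixed at $1$, so that the intermediate range exponent is $q_\theta = 1/(1-\theta)$ with $\theta = 1-\frac{1}{p}$), I would prefer to give the short direct argument so that the lemma is self-contained, since the semigroup setting makes the pointwise manipulations transparent.

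First I would dispose of the two endpoints. For $p=\infty$ the claimed inequality is exactly the hypothesis $\|P_t\|_{1\to\infty}\leq Ct^{-\frac{d}{2}}$, while for $p=1$ it reads $\|P_t\|_{1\to 1}\leq C$, which holds as soon as the output constant is taken at least as large as $C_{op}$. It therefore suffices to treat $1<p<\infty$.

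Next, fix $f\in\ell^1(V)$ and write $g = P_t f$. The central step is to split the $p$-th power pointwise as $|g_v|^p = |g_v|^{p-1}|g_v|$ and then control the two factors by different norms:
\begin{equation}
	\|g\|_p^p = \sum_{v\in V} |g_v|^{p-1}|g_v| \leq \|g\|_\infty^{p-1}\sum_{v\in V}|g_v| = \|g\|_\infty^{p-1}\|g\|_1.
\end{equation}
I would then insert the endpoint bounds $\|g\|_\infty \leq Ct^{-\frac{d}{2}}\|f\|_1$ and $\|g\|_1 \leq C_{op}\|f\|_1$, which come directly from $g = P_t f$ together with the hypotheses, to obtain
\begin{equation}
	\|g\|_p^p \leq \big(Ct^{-\frac{d}{2}}\|f\|_1\big)^{p-1}\,C_{op}\|f\|_1 = C^{p-1}C_{op}\,t^{-\frac{d}{2}(p-1)}\|f\|_1^p.
\end{equation}
Taking $p$-th roots and recalling that $\tfrac{p-1}{p} = 1-\tfrac{1}{p}$ yields $\|P_t f\|_p \leq C^{1-\frac{1}{p}}C_{op}^{\frac{1}{p}}\,t^{-\frac{d}{2}(1-\frac{1}{p})}\|f\|_1$.

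The only point requiring care, and the step I would flag as the crux even though it is elementary, is verifying that the resulting prefactor $C^{1-\frac{1}{p}}C_{op}^{\frac{1}{p}}$ is bounded independently of $p$, as the statement demands. Since both $1-\tfrac{1}{p}$ and $\tfrac{1}{p}$ lie in $[0,1]$ for $p\in[1,\infty]$, one has $C^{1-\frac{1}{p}}\leq\max\{1,C\}$ and $C_{op}^{\frac{1}{p}}\leq\max\{1,C_{op}\}$, so the product is dominated by the $p$-independent constant $\max\{1,C\}\max\{1,C_{op}\}$. Replacing the constant in the statement by this quantity then gives $(\ref{UltraContractive2})$ uniformly in $p$ and completes the argument.
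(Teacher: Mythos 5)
Your proof is correct and follows essentially the same route as the paper: both interpolate the two endpoint bounds via the inequality $\|g\|_p \leq \|g\|_1^{1/p}\|g\|_\infty^{1-1/p}$ and then bound the resulting prefactor $C^{1-1/p}C_{op}^{1/p}$ uniformly in $p$. The only difference is cosmetic — the paper cites this as the log-convexity property of $\ell^p$ norms, while you derive it inline from the pointwise split $|g_v|^p = |g_v|^{p-1}|g_v|$, which makes the lemma self-contained.
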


\begin{proof}
	We begin by recalling the log-convexity property of $\ell^p$ norms. For any $1 \leq p_0 \leq p_1 \leq \infty$ and $0 < \gamma < 1$ we define $p_\gamma$ by the equation 
	\begin{equation}
		\frac{1}{p_\gamma} = \frac{1 - \gamma}{p_0} + \frac{\gamma}{p_1}.
	\end{equation}
	Then for all $x \in \ell^{p_0}(V)$ we have
	\begin{equation}
		\|x\|_{p_\gamma} \leq \|x\|_{p_0}^{1 - \gamma}\|x\|_{p_1}^\gamma.
	\end{equation} 	
	
	To apply this log-convexity property to our present situation we take $p_0 = 1$ and $p_1 = \infty$. Then $p_\gamma = \frac{1}{1-\gamma}$ and for any $x \in \ell^1(V)$ we have
	\begin{equation}
		\begin{split}
			\|P_tx\|_{p_\gamma} &\leq \|P_tx\|_1^{\frac{1}{p_\gamma}}\|P_tx\|_{\infty}^{1 - \frac{1}{p_\gamma}} \\
			&\leq \|P_t\|_{1 \to 1}^\frac{1}{p_\gamma}\|x\|_1^{\frac{1}{p_\gamma}} \|P_t\|_{1\to \infty}^{1 - \frac{1}{p_\gamma}}\|x\|_1^{1 - \frac{1}{p_\gamma}} \\
			&\leq C_{op}^\frac{1}{p_\gamma}C^{1 - \frac{1}{p_\gamma}}t^{-\frac{d}{2}(1 - \frac{1}{p_\gamma})}\|x\|_1.
		\end{split}
	\end{equation} 
	Thus, taking $\|x\|_1 = 1$ shows $\|P_t\|_{1\to p_\gamma} \leq C_{op}^\frac{1}{p_\gamma}C^{1 - \frac{1}{p_\gamma}}t^{-\frac{d}{2}(1 - \frac{1}{p_\gamma})}$. By varying $\gamma \in (0,1)$ we obtain the result for $1 < p < \infty$ and the endpoints $p = 1,\infty$ are taken care of by assumption. Finally, the bound $C_{op}^\frac{1}{p_\gamma}C^{1 - \frac{1}{p_\gamma}}$ as a function of $p$ is uniformly bounded on $p\in[1,\infty]$, and therefore taking any $C > 0$ to be the supremum of this function will give that
	\begin{equation}
		\|P_tx\|_{1\to p} \leq Ct^{-\frac{d}{2}(1 - \frac{1}{p_\gamma})}\|x\|_1,	
	\end{equation}    
	completing the proof.
\end{proof}

We will see in the coming chapter that our investigations will greatly utilize the case $p = 2$ from Lemma $\ref{lem:Ultracontractive}$, which gives
\begin{equation} \label{UltraContractive3}
	\|P_t\|_{1 \to 2} \leq C_2 t^{-\frac{d}{4}}.	
\end{equation} 
Finally, to avoid the singularity at $t = 0$ in $(\ref{UltraContractive1})$ and $(\ref{UltraContractive3})$, we will use the alternative upper bounds:
\begin{subequations}
	\begin{equation}
		\|P_t\|_{1 \to \infty} \leq \tilde{C}(1 + t)^{-\frac{d}{2}},	
	\end{equation}	
	\begin{equation}
		\|P_t\|_{1 \to 2} \leq \tilde{C}(1 + t)^{-\frac{d}{4}},
	\end{equation}	
\end{subequations}
with a new constant $\tilde{C} > 0$. Note that such an alternative upper bound is possible since for large $t$ these new upper bounds decay at the same rate as the bounds in $(\ref{UltraContractive1})$ and $(\ref{UltraContractive3})$ and for small $t \geq 0$ the operator $P_t$ is well-behaved and finite.  We will also use such an alternative upper bound of $(1+t)^{-\frac{\eta}{2}}$ in $(\ref{NeighbourDecay})$ for the same reason.

This concludes our very brief exploration of the rich and diverse area of random walks on graphs. The results stated in this section will be applied to the phase system $(\ref{PerturbationSystem})$ in the coming sections in order to understand the stability of phase-locked solutions.

\section{A General Stability Theorem} \label{sec:Stability} 

Having now laid a foundation in the theory of random walks on weighted graphs, we are now in a position to return to our discussion of the stability of phase-locked solutions to the phase system $(\ref{PhaseSystem})$. For simplicity, in this section and the next we will often write the perturbation system $(\ref{PerturbationSystem})$ abstractly as an ordinary differential equation in the variable $\psi = \{\psi_v(t)\}_{v\in V}$ as 
\begin{equation} \label{CoupledNetwork2}
	\dot{\psi} = \mathcal{F}(\psi),
\end{equation}  
where $\mathcal{F}: \mathbb{R}^V \to \mathbb{R}^V$ represents the right-hand side of $(\ref{PerturbationSystem})$. Throughout this section we will lay out the Hypotheses necessary on $(\ref{PhaseSystem})$ to eventually provide the main stability theorem for phase-locked solutions to this system. The first of these hypotheses is as follows:

\begin{hyp} \label{Hyp:Neighbours} 
	There exists a finite $D \geq 1$ such that $1 \leq \#N(v) \leq D$ for every $v \in V$. 
\end{hyp}  

Hypothesis $\ref{Hyp:Neighbours}$ says that each element $\psi_v$ is influenced by a finite number of other elements, and that the number of influences on any single element is uniformly bounded. Moreover, we recall from our definition of the model that if $\psi_{v'}$ influences $\psi_v$, then $\psi_v$ influences $\psi_{v'}$, and hence the influence topology is symmetric. 

\begin{hyp} \label{Hyp:Linearization} 
	The coupling function $H:S^1 \to S^1$ satisfies
	\begin{equation} \label{PositiveWeights}
		H'(\bar{\theta}_{v'} - \bar{\theta}_v) = H'(\bar{\theta}_v - \bar{\theta}_{v'}) \geq 0
	\end{equation} 
	for all $v \in V$ and $v' \in N(v)$.
\end{hyp}

What we obtain from Hypothesis $\ref{Hyp:Linearization}$ is our phase-locked solution naturally can be related to an infinite weighted graph, simply denoted $G = (V,E,w)$, with vertex set $V$ and edge set, $E$, contained in the set of all possible influences. Indeed, the symmetric weight function is given by 
\begin{equation}
	w(v,v') = \left\{
     		\begin{array}{cl}
       			H'(\bar{\theta}_{v'} - \bar{\theta}_{v}) & : v' \in N(v)\\
       			0 & : v' \notin N(v). \\
     		\end{array}
   	\right.
\end{equation}   
The condition $(\ref{PositiveWeights})$ guarantees that $w(v,v') = w(v',v) \geq 0$ for all $v,v' \in V$. Furthermore, Hypothesis $\ref{Hyp:Neighbours}$ guarantees that the degree of each vertex is finite and uniformly bounded. 
 
Notice that a necessary condition for there to be an edge between vertices $v,v' \in V$ is that $v' \in N(v)$ (or equivalently $v \in N(v')$). This condition is not sufficient since it could be the case that for some $v \in V$ and $v' \in N(v)$ we have $H'(\bar{\theta}_{v'} - \bar{\theta}_{v}) = 0$, and therefore there is no edge between $v$ and $v'$ by definition of a weight function on a graph. This implies that even if $v' \in N(v)$, the distance between these vertices on the graph $G$ is not guaranteed to be $1$ since there may not be an edge connecting these vertices. This necessitates the following hypothesis.

\begin{hyp} \label{Hyp:GraphDistance} 
	$G$ is a connected graph. Furthermore the associated graph metric, $\rho: V \times V \to [0,\infty)$, satisfies
	\begin{equation} \label{MetricBound}
		\sup_{v \in V, v'\in N(v)}\rho(v,v') < \infty.	
	\end{equation}	
\end{hyp}

Before we are able to apply the results of random walks on infinite graphs, we must point out that the linearization $L_{\bar{\theta}}$ given in $(\ref{Linearization})$ is not in the form that was investigated through random walks. That is, we are missing the $1/m(v)$ term from $(\ref{NormalizedLap})$. If $m(v)$ is positive and independent of $v$ we may simply rescale $t \to m(v) t$, which will apply the appropriate $1/m(v)$ term to obtain the appropriate Laplacian operator. Then the operator $[1/m(v)]L_{\bar{\theta}}$ is in the appropriate form to apply the theory from random walks on graphs.

We will now describe how to overcome this problem when $m(v)$ is not independent of $v$. To begin, note that Hypotheses $\ref{Hyp:Neighbours}$ and $\ref{Hyp:Linearization}$ together give that there exists an $M > 0$ such that
\begin{equation}
	m(v) = \sum_{v' \in N(v)} H'(\bar{\theta}_{v'} - \bar{\theta}_v) \leq M
\end{equation}  
for all $v \in V$. Letting $t \to (M + 1)t$ scales $(\ref{CoupledNetwork2})$ to the equivalent differential equation 
\begin{equation}
	\dot{\psi} = \frac{1}{M + 1}\mathcal{F}(\psi).
\end{equation}
Furthermore, linearizing about the steady-state $\psi = 0$ now results in the linearization 
\begin{equation}
	\tilde{L}_{\bar{\theta}} := \frac{1}{M + 1}L_{\bar{\theta}}.
\end{equation}
We have now normalized the operator $L_{\bar{\theta}}$, and wish to consider a new graph, $\tilde{G}$, so that the measure of each vertex is given by $\tilde{m}(v) = M + 1$ for all $v \in V$. In doing so we will have that $\tilde{L}_{\bar{\theta}}$ is of the proper form to apply the results of the previous section. First, notice that 
\begin{equation}
	\sum_{v' \in V} \frac{w(v,v')}{M + 1} = \sum_{v' \in N(v)} \frac{H'(\bar{\theta}_{v'} - \bar{\theta}_v)}{M+1} \leq \frac{M}{M + 1} < 1. 
\end{equation}
Let us extend the graph $G$ to $\tilde{G}$ by adding a loop at every vertex (an edge which originates and terminates at the same vertex) and augment to a new weight function $\tilde{w}: V \times V \to [0,\infty)$ given by
\begin{equation} \label{LoopConstruction}
	\tilde{w}(v,v') = \left\{
     		\begin{array}{cl}
       			H'(\bar{x}_{v'} - \bar{x}_{v}) & : v' \in N(v),\ v' \neq v\\
       			1 + M - \sum_{v''}H'(\bar{x}_{v''} - \bar{x}_{v}) & : v' = v \\ 
			0 & : v' \notin N(v). \\
     		\end{array}
   	\right.	
\end{equation}
That is, the missing weight for the measure $\tilde{m}$ to be identically $M + 1$ for each $v\in V$ is made up for by the new loop connecting each vertex to itself. Notice that adding loops to a graph does not change the form of the graph Laplacian. Indeed, for each $v \in V$ we have
\begin{equation}
	\begin{split}
		\sum_{v' \in N(v)} w(v,v')(x_{v'} - x_v) &= \sum_{v' \in N(v)} \underbrace{\tilde{w}(v,v')}_\text{$=w(v,v')$}(x_{v'} - x_v) \\
		&= \sum_{v' \in N(v)} \tilde{w}(v,v')(x_{v'} - x_v) + \underbrace{\tilde{w}(v,v)(x_v - x_v)}_\text{$= 0$}  \\
		&= \sum_{v' \in N(v)\cup\{v\}} \tilde{w}(v,v')(x_{v'} - x_v).	
	\end{split}
\end{equation}
The underlying graph will be denoted $\tilde{G} = (V,\tilde{E},\tilde{w})$. Notice that if $G$ is connected then $\tilde{G}$ is also connected since we have not eliminated any edges from $G$ to form $\tilde{G}$. Furthermore, we again have a uniform bound on the weight function $\tilde{w}$ given by $M + 1$. One should also note that Hypothesis $\ref{Hyp:Neighbours}$ also gives that $\tilde{L}_{\bar{\theta}}$ again is a bounded operator on the sequence spaces from Lemma $\ref{lem:NormLaplacian}$.  

\begin{hyp} \label{Hyp:DecayRates} 
	Assume that one of the following is true: 
	\begin{itemize}
		\item If $m(v)$ is independent of $v\in V$, assume there exists a $d\geq 2$ such that the graph $G = (V,E,w)$ satisfies $VG(d)$, $PI$ and $\Delta$. 
		\item If $m(v)$ is not independent of $v \in V$, assume there exists a $d \geq 2$ such that the graph $\tilde{G} = (V,\tilde{E},\tilde{w})$ (as constructed above) satisfies $VG(d)$, $PI$ and $\Delta$. 	
	\end{itemize}	
\end{hyp}

Notice that the assumptions of this hypothesis imply that the graph satisfies the assumptions of Proposition $\ref{prop:Delmotte}$, and therefore we obtain the algebraic decay rates on the transition probabilities of a random walk on the vertices of the graph $\cite{Delmotte}$. This hypothesis then in turn allows one to infer the results of Corollary $\ref{cor:InfDecay}$ and Lemma $\ref{lem:Ultracontractive}$. This leads to the following stability theorem whose proof is left to Section $\ref{sec:StabilityProof}$.

\begin{thm} \label{thm:Stability} 
	Consider the system $(\ref{PerturbationSystem})$ for a twice-differentiable function $H:S^1 \to S^1$ satisfying the Hypothesis $\ref{Hyp:Neighbours}$ and $\ref{Hyp:Linearization}$. Assume further that linearizing about the steady-state $\psi = 0$ leads to a linear operator, $L_{\bar{\theta}}$, satisfying Hypotheses $\ref{Hyp:GraphDistance}$ and $\ref{Hyp:DecayRates}$. Then, there exists an $\epsilon > 0$ for which every $\psi_0 = \{\psi_{v,0}\}_{v \in V}$ with the property that
	\begin{equation}
		\|\psi_0\|_1 \leq \varepsilon,	
	\end{equation}   
	defines a unique solution of $(\ref{PerturbationSystem})$, $\psi(t)$ for all $t \geq 0$, satisfying the following properties:
	\begin{enumerate}
		\item $\psi(0) = \psi_0$.
		\item $\psi(t) \in \ell^p(V)$ for all $1 \leq p \leq \infty$.
		\item There exists a $C > 0$ such that 
		\begin{subequations}
			\begin{equation}
					\|\psi(t)\|_1 \leq C \|\psi_0\|_1, 
				\end{equation}
			\begin{equation}
					\|\psi(t)\|_2 \leq C (1 + t)^{-\frac{d}{4}}\|\psi_0\|_1, 
				\end{equation}
				\begin{equation}
					\|\psi(t)\|_\infty \leq C (1 + t)^{-\frac{d}{2}}\|\psi_0\|_1,
			\end{equation}
		\end{subequations}
			for all $t \geq 0$ and $d\geq 2$ given by Hypothesis~\ref{Hyp:DecayRates}. 
	\end{enumerate}  	
\end{thm}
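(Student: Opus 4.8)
The plan is to recast the perturbation system $(\ref{PerturbationSystem})$ as a semilinear equation $\dot\psi = \tilde L_{\bar\theta}\psi + N(\psi)$ and to construct solutions by a Duhamel (variation-of-constants) fixed-point argument. First I would use the phase-lag identity $(\ref{PhaseLagsEqn})$ to cancel the constant terms $(\omega_v-\Omega)$, and then Taylor-expand $H$ to second order about each $\bar\theta_{v'}-\bar\theta_v$: since $H$ is $C^2$ on the compact circle, $\|H''\|_\infty<\infty$, the first-order part is exactly the normalized graph Laplacian $\tilde L_{\bar\theta}$ coming from $(\ref{Linearization})$, and the remainder is the quadratic nonlinearity
\begin{equation*}
	[N(\psi)]_v = \frac{1}{2(M+1)}\sum_{v'\in N(v)}H''(\xi_{v,v'})(\psi_{v'}-\psi_v)^2 .
\end{equation*}
Writing $P_t = e^{\tilde L_{\bar\theta}t}$ for the semigroup, a global mild solution is then a fixed point of
\begin{equation*}
	\Phi(\psi)(t) = P_t\psi_0 + \int_0^t P_{t-s}N(\psi(s))\,ds .
\end{equation*}

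The decisive structural observation is that the nonlinearity, being quadratic, is naturally measured in $\ell^1$: expanding $(\psi_{v'}-\psi_v)^2\le 2(\psi_{v'}^2+\psi_v^2)$ and using Hypothesis~\ref{Hyp:Neighbours} together with the symmetry of the influence relation (so that each vertex lies in at most $D$ neighbourhoods) yields a bound of the form $\|N(\psi)\|_1\le K\|\psi\|_2^2$, with an analogous Lipschitz estimate $\|N(\psi)-N(\phi)\|_1\le K(\|\psi\|_2+\|\phi\|_2)\|\psi-\phi\|_2$ obtained from a further application of the mean value theorem. I would then work in the weighted solution space carrying the norm
\begin{equation*}
	\|\psi\|_X := \sup_{t\ge 0}\Big[\,\|\psi(t)\|_1 + (1+t)^{\frac d4}\|\psi(t)\|_2 + (1+t)^{\frac d2}\|\psi(t)\|_\infty\,\Big],
\end{equation*}
and feed the linear decay bounds into $\Phi$: Corollary~\ref{cor:InfDecay} and the $p=2$ case $(\ref{UltraContractive3})$ of Lemma~\ref{lem:Ultracontractive} bound the linear term $P_t\psi_0$ in all three norms by $\|\psi_0\|_1$, while the uniform bound $\|P_t\|_{1\to1}\le C_{op}$ handles the $\ell^1$ part of the Duhamel integral. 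Substituting $\|N(\psi(s))\|_1\le K\|\psi\|_X^2(1+s)^{-d/2}$ reduces the whole estimate to convolution integrals $\int_0^t(1+t-s)^{-\beta}(1+s)^{-d/2}\,ds$ with $\beta\in\{0,\tfrac d4,\tfrac d2\}$; closing the iteration amounts to dominating these by $C(1+t)^{-\beta}$ so that $\Phi$ maps a small ball of $X$ into itself and contracts. Existence, uniqueness, and the three decay rates then follow from the contraction mapping principle, with $\psi(t)\in\ell^p(V)$ for all $p$ recovered by interpolating between the $\ell^1$ and $\ell^\infty$ bounds.

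The main obstacle is the borderline dimension $d=2$, which is exactly the case of the $\mathbb Z^2$ discrete Laplacian motivating the paper. For $d>2$ one has $d/2>1$, so every convolution above is uniformly controlled (and decays like $(1+t)^{-\beta}$) by standard Beta-integral asymptotics and the contraction closes at once. At $d=2$, however, $d/2=1$ and these integrals — including the one controlling $\|\psi(t)\|_1$ — diverge logarithmically, so the crude bound $\|N(\psi)\|_1\le K\|\psi\|_2^2$ cannot by itself reproduce the clean rates in the statement. The resolution must extract extra decay from the fact that $N(\psi)$ is built out of the discrete gradients $(\psi_{v'}-\psi_v)$; the heat-kernel Hölder estimate $(\ref{NeighbourDecay})$, which the preliminaries take care to record, is evidently the tool meant to supply the missing factor $(1+t-s)^{-\eta/2}$ in the Duhamel integrand. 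Bringing $(\ref{NeighbourDecay})$ to bear on the difference structure of the nonlinearity — equivalently, showing that the relevant Dirichlet-energy quantity of the solution decays a full power faster than $\|\psi(t)\|_2^2$ — is where I expect the genuine difficulty to lie.
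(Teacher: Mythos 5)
Your skeleton coincides with the paper's own proof: the same splitting of $(\ref{PerturbationSystem})$ into $\dot\psi = \tilde{L}_{\bar{\theta}}\psi + \mathcal{G}(\psi)$, the same Duhamel fixed-point formulation, the same weighted decay norms, and a correct diagnosis that the crude bound $\|N(\psi)\|_1 \leq K\|\psi\|_2^2$ produces integrands $(1+t-s)^{-\beta}(1+s)^{-d/2}$ whose time integrals fail logarithmically at the borderline case $d=2$. You even name the right tool, the H\"older-type heat-kernel estimate $(\ref{NeighbourDecay})$. But the proposal stops exactly where the proof actually is: you defer the extraction of the extra decay as "where I expect the genuine difficulty to lie," and that step is the substantive content of Section~\ref{sec:StabilityProof}. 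The paper's resolution has three interlocking pieces, all absent from your outline. First, Lemma~\ref{lem:Boundedness1} bounds the nonlinearity by the Dirichlet-type quantity itself, $\|\mathcal{G}(\psi)\|_1 \leq K Q(\psi)$ with $Q(x) = \sum_{v}\sum_{v'\in N(v)}|x_{v'}-x_v|^2$, rather than discarding the difference structure by passing to $\|\psi\|_2^2$. Second, Lemma~\ref{lem:QForm} converts $(\ref{NeighbourDecay})$ into the semigroup estimate $\sqrt{Q(P_t\psi)} \leq C_Q(1+t)^{-\eta/2}\|P_t|\psi|\|_2$, which is the precise mechanism by which the gradient structure buys the missing factor. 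Third---and this is the structural device one cannot avoid---the solution space $X$ in $(\ref{USpace})$ is \emph{augmented}: membership requires both $\|\psi(t)\|_2 \leq 2C_1(1+t)^{-d/4}\|\psi_0\|_1$ and $\sqrt{Q(\psi(t))} \leq 2C_1C_Q(1+t)^{-d/4-\eta/2}\|\psi_0\|_1$, so that the decay of $Q$ is bootstrapped through the fixed point: it enters as input via $\|\mathcal{G}(\psi(s))\|_1 \leq KQ(\psi(s)) \lesssim (1+s)^{-d/2-\eta}\|\psi_0\|_1^2$ and is reproduced as output by applying Lemma~\ref{lem:QForm} inside the Duhamel integral. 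With this, every convolution has inner exponent $\tfrac{d}{2}+\eta > 1$, and Lemma~\ref{lem:IntegralLemma} (including its case $\gamma_1 = 1 < \gamma_2$, needed for the $\ell^\infty$ bound at $d=2$) closes the argument. Knowing only that the \emph{linear} flow has this gradient-decay property does not suffice; it must be made self-consistent for the nonlinear iterate, which is what Proposition~\ref{prop:Decays} accomplishes.

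There is also a secondary problem with the way you propose to close the iteration. Even granting all the decay estimates, a contraction in your global weighted norm does not close at $d=2$: your Lipschitz bound $\|N(\psi)-N(\phi)\|_1 \leq K(\|\psi\|_2+\|\phi\|_2)\|\psi-\phi\|_2$ yields, in the $\ell^1$ component, an integrand of size $(1+s)^{-d/2}\|\psi-\phi\|_X$, whose integral diverges logarithmically precisely when $d=2$---the same borderline failure, now in the contraction estimate, and the $Q$-trick does not transparently repair it because $N(\psi)-N(\phi)$ lacks an obvious bound by $Q$ of the difference. The paper sidesteps this entirely: it runs the contraction on unit time intervals $[n,n+1]$ with the unweighted metric $\sup_t\|\psi_1(t)-\psi_2(t)\|_2$, using only the smallness $(\ref{LipschitzConstant})$ of $\mathcal{G}'$ near the origin to get a contraction factor $\tfrac12$, uses Proposition~\ref{prop:Decays} solely to verify invariance of the decay class $X$, proves completeness of the relevant metric space (Claim~\ref{claim:U1Complete}), and then glues the unique solutions together by induction on $n$. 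You would need either this interval-by-interval scheme or some genuinely new estimate to finish; as written, both the self-mapping and the contraction halves of your argument are open at $d=2$, which is the case the theorem is chiefly designed to cover.
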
 

We present the following extension of Theorem~\ref{thm:Stability} for completion. It should be noted that the proof is identical to that of Lemma $\ref{lem:Ultracontractive}$ where one simply applies the log-convexity of the $\ell^p$ norms and is therefore omitted. 

\begin{cor}
	Under the assumptions of Theorem~\ref{thm:Stability} there exists a $C > 0$ such that the solution $\psi(t)$ further satisfies
	\begin{equation}
		\|\psi(t)\|_p \leq C(1+t)^{-\frac{d}{2}(1-\frac{1}{p})}\|\psi_0\|_1,
	\end{equation} 
	for all $t\geq 0$ and $p \in [1,\infty]$.
\end{cor}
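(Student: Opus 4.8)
The plan is to interpolate between the two endpoint decay estimates already furnished by Theorem~\ref{thm:Stability}, namely $\|\psi(t)\|_1 \leq C\|\psi_0\|_1$ and $\|\psi(t)\|_\infty \leq C(1+t)^{-\frac{d}{2}}\|\psi_0\|_1$, in exactly the manner of the proof of Lemma~\ref{lem:Ultracontractive}. First I would invoke the log-convexity of the $\ell^p$ norms: for $1 \leq p_0 \leq p_1 \leq \infty$ and $0 < \gamma < 1$, with $p_\gamma$ defined by $\frac{1}{p_\gamma} = \frac{1-\gamma}{p_0} + \frac{\gamma}{p_1}$, every $x \in \ell^{p_0}(V)$ obeys $\|x\|_{p_\gamma} \leq \|x\|_{p_0}^{1-\gamma}\|x\|_{p_1}^\gamma$.

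Specializing to $p_0 = 1$ and $p_1 = \infty$ yields $p_\gamma = \frac{1}{1-\gamma}$, equivalently $\gamma = 1 - \frac{1}{p_\gamma}$. I would then apply the log-convexity inequality to $x = \psi(t)$ and substitute the two endpoint bounds from Theorem~\ref{thm:Stability}, obtaining
\begin{equation}
    \|\psi(t)\|_{p_\gamma} \leq \big(C\|\psi_0\|_1\big)^{1-\gamma}\big(C(1+t)^{-\frac{d}{2}}\|\psi_0\|_1\big)^{\gamma} = C(1+t)^{-\frac{d}{2}\gamma}\|\psi_0\|_1.
\end{equation}
Since $\gamma = 1 - \frac{1}{p_\gamma}$, the exponent is precisely $-\frac{d}{2}\big(1 - \frac{1}{p_\gamma}\big)$, which is the claimed rate with $p = p_\gamma$. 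Letting $\gamma$ range over $(0,1)$ recovers every $p \in (1,\infty)$, and the two endpoints $p = 1,\infty$ are themselves the hypotheses imported from Theorem~\ref{thm:Stability}.

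There is essentially no obstacle here, which is why the paper omits the argument; the only point deserving a word is uniformity of the constant in $p$. Because Theorem~\ref{thm:Stability} supplies a single $C$ serving both endpoints, the interpolation prefactor collapses to $C^{1-\gamma}C^{\gamma} = C$, so no $p$-dependence enters at all. This is in fact simpler than Lemma~\ref{lem:Ultracontractive}, where the two endpoint constants $C_{op}$ and $C$ differed and one had to bound $C_{op}^{1/p_\gamma}C^{1-1/p_\gamma}$ uniformly over $p \in [1,\infty]$ by passing to its supremum. In the present setting the proof is a verbatim transcription of that of Lemma~\ref{lem:Ultracontractive}, with the semigroup action $P_t x$ replaced by the nonlinear solution $\psi(t)$ and the ultracontractive semigroup bounds replaced by the decay estimates of Theorem~\ref{thm:Stability}.
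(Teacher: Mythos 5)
Your proposal is correct and is precisely the argument the paper intends: the paper omits the proof, stating it is identical to that of Lemma~\ref{lem:Ultracontractive} via log-convexity of the $\ell^p$ norms, which is exactly your interpolation between the $\ell^1$ and $\ell^\infty$ bounds of Theorem~\ref{thm:Stability}. Your added remark that the single constant $C$ from the theorem makes the interpolated constant automatically uniform in $p$ (unlike in Lemma~\ref{lem:Ultracontractive}, where a supremum over the mixed prefactor was needed) is a accurate and worthwhile observation.
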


\begin{rmk} \label{rmk:Digraph}
	It is important to note that the most restrictive assumption is the symmetry condition $(\ref{PositiveWeights})$ of Hypothesis $\ref{Hyp:Linearization}$. When $(\ref{PositiveWeights})$ is broken for even a single index, the graph becomes a directed graph (or digraph) and therefore all of the theory from Section $\ref{sec:Graphs}$ can no longer be applied. This situation would require the development of comparable techniques to obtain similar decay rates for random walks on digraphs which do not appear to be available at this time. 
\end{rmk}

\begin{rmk}
	Although in this work we have assumed that coupling between oscillators is identically given through the function $H$, Theorem~\ref{thm:Stability} could be extended to non-identical coupling functions as well. We have refrained from doing this here for the ease of conveying the results and also due to the fact that it seems impractical to assume that a symmetry condition equivalent to $(\ref{PositiveWeights})$ could hold. That is, for a system with non-identical coupling such as
	\begin{equation}
		\dot{\theta} = \omega_v + \sum_{v' \in N(v)} H(\theta_{v'} - \theta_v,v,v'),
	\end{equation}  
	condition $(\ref{PositiveWeights})$ would need to be replaced with a condition such as
	\begin{equation} \label{NonIdenticalSymmetry}
		H'(\bar{\theta}_{v'} - \bar{\theta}_v,v,v') = H'(\bar{\theta}_{v} - \bar{\theta}_{v'},v',v) \geq 0 
	\end{equation}
	for all $v \in V$ and $v' \in N(v)$. It appears that stability results for non-identical coupling functions would be more realistic if the symmetry condition $(\ref{NonIdenticalSymmetry})$ could merely be replaced with the simpler condition $H'(\bar{\theta}_{v'} - \bar{\theta}_v,v,v') \geq 0$, thus resulting in a digraph. This is exactly what has already been done for finite systems of coupled oscillators in $\cite{ErmentroutStability}$. 
\end{rmk}

\section{Proof of Theorem~\ref{thm:Stability}} \label{sec:StabilityProof} 

Throughout this section we will assume that Hypothesis $\ref{Hyp:Neighbours},\ref{Hyp:Linearization},\ref{Hyp:GraphDistance}$ and $\ref{Hyp:DecayRates}$ hold. We will work through this proof under the assumption that the second case of Hypothesis $\ref{Hyp:DecayRates}$ holds, although the proof using the first case is nearly identical. Following the discussion prior to stating Hypothesis $\ref{Hyp:DecayRates}$, we will apply the appropriate re-parametrization of $t$. Let us further write $(\ref{CoupledNetwork2})$ in the equivalent form
\begin{equation} \label{CoupledNetwork3}
	\dot{\psi} = \tilde{L}_{\bar{\theta}}\psi + \mathcal{G}(\psi),
\end{equation}
where $\mathcal{G}(\psi) = \frac{1}{M+1}\mathcal{F}(\psi) - \tilde{L}_{\bar{\theta}}\psi$ and we suppress the dependence of $\psi$ on $t$. Notice that $\mathcal{G}(0) = 0$ and its the derivative satisifies $\mathcal{G}'(0) = 0$. Denoting $P_t = e^{\tilde{L}_{\bar{\theta}}t}$ to be the semigroup generated by the linearization $\tilde{L}_{\bar{\theta}}$ for all $t \geq 0$, we arrive at the equivalent formulation of $(\ref{CoupledNetwork3})$ given by
\begin{equation} \label{IntegralForm}
	\psi(t) = P_t \psi(0) + \int_0^t P_{t-s}\mathcal{G}(\psi(s))ds	
\end{equation} 
for any $t \geq 0$. Then any function $\psi(t)$ which satisfies $(\ref{IntegralForm})$ satisfies the differential equation $(\ref{PerturbationSystem})$ for $t \geq 0$.

Let $Q: \mathbb{R}^V \to \mathbb{R}$ be the operator acting upon the elements $x = \{x_v\}_{v\in V}$ by 
\begin{equation} \label{Quadratic}
	Q(x) = \sum_{v \in V} \sum_{v' \in N(v)} |x_{v'} - x_v|^2.
\end{equation}
Clearly $Q(x) \geq 0$ for all $x$, and furthermore using the Parallelogram Law one can see that for any $x \in \ell^2(V)$ we have
\begin{equation} \label{QuadBound}
	0 \leq Q(x) \leq 2D\|x\|_2^2, 
\end{equation} 
where we recall that from Hypothesis $\ref{Hyp:Neighbours}$ we have that $\#N(v) \leq D$ for all $v \in V$. Furthermore, $\sqrt{Q(\cdot)}$ defines a seminorm, and therefore satisfies the triangle inequality (for example, see Lemma $4.3$ of $\cite{Jorgensen}$). This leads to the first result. 

\begin{lem} \label{lem:Boundedness1} 
	For any $\psi \in \ell^2(V)$, there exists a $K > 0$, depending on $\|\psi\|_2$, such that
	\begin{equation}
		\|\mathcal{G}(\psi)\|_1 \leq KQ(\psi).
	\end{equation}
\end{lem}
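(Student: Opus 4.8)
The plan is to write $\mathcal{G}(\psi)$ out componentwise and recognize each summand as the remainder of a first-order Taylor expansion of $H$. First I would use the phase-lag equation $(\ref{PhaseLagsEqn})$ to eliminate the constant term, substituting $\omega_v - \Omega = -\sum_{v' \in N(v)} H(\bar{\theta}_{v'} - \bar{\theta}_v)$ into the right-hand side $\mathcal{F}$ of $(\ref{PerturbationSystem})$. This yields
\begin{equation}
	[\mathcal{F}(\psi)]_v = \sum_{v' \in N(v)}\big[ H(\bar{\theta}_{v'} - \bar{\theta}_v + \psi_{v'} - \psi_v) - H(\bar{\theta}_{v'} - \bar{\theta}_v)\big].
\end{equation}
Recalling that $\mathcal{G}(\psi) = \frac{1}{M+1}(\mathcal{F}(\psi) - L_{\bar{\theta}}\psi)$ and the form $(\ref{Linearization})$ of $L_{\bar{\theta}}$, each component becomes
\begin{equation}
	[\mathcal{G}(\psi)]_v = \frac{1}{M+1}\sum_{v' \in N(v)} R\big(\bar{\theta}_{v'} - \bar{\theta}_v,\, \psi_{v'} - \psi_v\big),
\end{equation}
where $R(a,h) := H(a+h) - H(a) - H'(a)h$ is exactly the error in the first-order Taylor approximation of $H$ at the point $a$.

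The key estimate is a uniform bound on this remainder. Since $H$ is twice differentiable and $2\pi$-periodic, $H''$ is continuous on the compact circle $S^1$ and is therefore bounded; write $\|H''\|_\infty < \infty$. Taylor's theorem in integral form gives $R(a,h) = \int_0^h (h - s) H''(a + s)\, ds$, from which $|R(a,h)| \leq \frac{1}{2}\|H''\|_\infty h^2$ for every $a$ and every $h$. Applying this with $a = \bar{\theta}_{v'} - \bar{\theta}_v$ and $h = \psi_{v'} - \psi_v$ produces the pointwise bound
\begin{equation}
	|[\mathcal{G}(\psi)]_v| \leq \frac{\|H''\|_\infty}{2(M+1)} \sum_{v' \in N(v)} |\psi_{v'} - \psi_v|^2.
\end{equation}

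Finally I would sum this inequality over all $v \in V$. Because every term is nonnegative and each $N(v)$ is finite by Hypothesis $\ref{Hyp:Neighbours}$, the interchange of summation is justified by Tonelli's theorem, and the resulting double sum is precisely $Q(\psi)$ as defined in $(\ref{Quadratic})$. This gives $\|\mathcal{G}(\psi)\|_1 \leq \frac{\|H''\|_\infty}{2(M+1)} Q(\psi)$, so the lemma holds with $K = \|H''\|_\infty / (2(M+1))$. I do not anticipate a serious obstacle here: the only points requiring care are the uniform boundedness of $H''$ (which rests on the compactness of $S^1$) and the justification of the rearrangement of the sum. It is worth noting that this argument in fact produces a constant $K$ independent of $\|\psi\|_2$, which is stronger than the statement requires; the finiteness of the bound for $\psi \in \ell^2(V)$ is then guaranteed by $(\ref{QuadBound})$, which simultaneously confirms that $\mathcal{G}(\psi) \in \ell^1(V)$.
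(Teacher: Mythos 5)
Your proof is correct and follows essentially the same route as the paper: both identify $[\mathcal{G}(\psi)]_v$ as a sum of first-order Taylor remainders of $H$, bound each remainder by $\tfrac{1}{2}\sup|H''|\,|\psi_{v'}-\psi_v|^2$, and sum over $v$ to produce $Q(\psi)$. The only difference is minor: the paper localizes the bound on $H''$ to $|x| \leq 2\|\psi\|_2$, making its constant $K$ depend on $\|\psi\|_2$, whereas you use periodicity of $H$ to take a global bound $\|H''\|_\infty$, giving a constant independent of $\|\psi\|_2$ --- a mild (and harmless) strengthening of the stated lemma.
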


\begin{proof}
	Let us write $\delta := \|\psi\|_2$. We then have that $\|\psi\|_\infty \leq \delta$ and hence $|\psi_{v'} - \psi_v| \leq 2\delta$ for all $v,v' \in V$. Since $H \in C^2(\mathbb{R})$ we can define
	\begin{equation}
		K_1(\delta) := \sup_{|x| \leq 2\delta} |H''(x)| < \infty.
	\end{equation} 	
	By Taylor's Theorem, for all $v \in V$ and $v' \in N(v)$ we have
	\begin{equation} 
		|H(\bar{\theta}_{v'} + \psi_{v'} - \bar{\theta}_v -\psi_v) - H(\bar{\theta}_{v'} - \bar{\theta}_v) - H'(\bar{\theta}_{v'} - \bar{\theta}_v)(\psi_{v'} - \psi_v)| \leq \frac{K_1(\delta)}{2}|\psi_{v'} - \psi_v|^2.
	\end{equation} 
	Then recalling $\mathcal{G}(0) = 0$ and $\mathcal{G}'(0) = 0$, and using the previous inequality we get
	\begin{equation}
		\begin{split}
			\|\mathcal{G}(\psi)\|_1 &= \|\mathcal{G}(\psi) - \mathcal{G}(0) - \mathcal{G}'(0)\psi\|_1 \\
			&= \frac{1}{M+1}\sum_v \bigg|\sum_{v' \in N(v)} H(\bar{\theta}_{v'} + \psi_{v'} - \bar{\theta}_v -\psi_v) - H(\bar{\theta}_{v'} - \bar{\theta}_v) \\
			 &\ \ \ \ \ - H'(\bar{\theta}_{v'} - \bar{\theta}_v)(\psi_{v'} - \psi_v)\bigg| \\
			&\leq \frac{K_1(\delta)}{2(M+1)} \sum_v \sum_{v' \in N(v)} |\psi_{v'} - \psi_v|^2 \\
			&= \frac{K_1(\delta)}{2(M+1)} Q(\psi),  
		\end{split}
	\end{equation}
	completing the proof of the lemma. 
\end{proof} 

Now from the decay rates in Section $\ref{subsec:RandomWalks}$, for all $t \geq 0$ we have the following decay estimates for the semigroup $P_t$:
\begin{subequations} \label{DecayEstimates} 
	\begin{equation} \label{PtContraction}
		\|P_t\|_{p \to p} \leq C_{op},\ \ {\rm for\ all}\ \ 1 \leq p \leq \infty, 
	\end{equation}
	\begin{equation} \label{2Decay}
		\|P_t\|_{1 \to 2} \leq C_1(1 + t)^{-\frac{d}{4}},
	\end{equation}
	\begin{equation} \label{InftyDecay}
		\|P_t\|_{1 \to \infty} \leq C_1(1 + t)^{-\frac{d}{2}},
	\end{equation}
\end{subequations}
for some $C_{op}, C_1 > 0$. There is also one more important estimate which must be established in the following lemma.  

\begin{lem} \label{lem:QForm} 
	Let $\eta > 0$ be the associated value to $P_t$ that satisfies the estimate $(\ref{NeighbourDecay})$. For all $\psi \in \ell^2(V)$, there exists a constant $C_Q > 0$ independent of $\psi$ such that 
	\begin{equation} 
		\sqrt{Q(P_t\psi)} \leq C_Q (1 + t)^{-\frac{\eta}{2}} \|P_{t}|\psi| \|_2,
	\end{equation}  
	where $|\psi| = \{|\psi_v|\}_{v\in V}$.
\end{lem}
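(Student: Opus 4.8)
The plan is to estimate $Q(P_t\psi)$ directly from its definition $(\ref{Quadratic})$ by controlling each nearest-neighbour difference of the evolved sequence pointwise. Writing $[P_t\psi]_v = \sum_{w\in V} p_t(v,w)\psi_w$ as in $(\ref{HeatSoln})$, for any $v\in V$ and $v'\in N(v)$ the triangle inequality gives
\[
	\bigl|[P_t\psi]_{v'} - [P_t\psi]_v\bigr| \leq \sum_{w\in V} |p_t(v',w) - p_t(v,w)|\,|\psi_w|,
\]
so the task reduces to controlling the difference of transition probabilities in their first argument. This is exactly what the gradient-type estimate $(\ref{NeighbourDecay})$ supplies.

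First I would apply $(\ref{NeighbourDecay})$ (in its desingularized form, with $t^{-\eta/2}$ replaced by the equivalent $(1+t)^{-\eta/2}$) with $v_1 = v'$, $v_2 = v$ and $v_3 = w$. Since the proof operates with the graph $\tilde{G}$, whose measure is the constant $\tilde{m} = M+1$ by construction, the factor $m(v_3)$ is a constant that may be pulled out. Hypothesis $\ref{Hyp:GraphDistance}$ furnishes a uniform bound $R := \sup_{v\in V,\,v'\in N(v)} \rho(v,v') < \infty$, so that $\rho(v',v)^\eta \leq R^\eta$ along every edge. Combining these, there is a constant $C' > 0$ with $|p_t(v',w) - p_t(v,w)| \leq C'(1+t)^{-\frac{\eta}{2}}\,p_{2t}(v',w)$ for all relevant $v,v',w$. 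Substituting into the previous display and recognising the resulting sum as $(\ref{HeatSoln})$ applied to the nonnegative sequence $|\psi|$ yields the key pointwise bound
\[
	\bigl|[P_t\psi]_{v'} - [P_t\psi]_v\bigr| \leq C'(1+t)^{-\frac{\eta}{2}}\,[P_{2t}|\psi|]_{v'}.
\]

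Next I would square this bound, sum over $v$ and over $v'\in N(v)$, and interchange the order of summation. Because the neighbour relation is symmetric and $\#N(v)\leq D$ by Hypothesis $\ref{Hyp:Neighbours}$, each index $v'$ is counted at most $D$ times, so that
\[
	Q(P_t\psi) \leq (C')^2(1+t)^{-\eta}\sum_{v}\sum_{v'\in N(v)}\bigl|[P_{2t}|\psi|]_{v'}\bigr|^2 \leq D\,(C')^2(1+t)^{-\eta}\,\bigl\|P_{2t}|\psi|\bigr\|_2^2.
\]
Finally, writing $P_{2t} = P_t P_t$ and invoking the uniform $\ell^2$-contractivity $(\ref{PtContraction})$ in the form $\|P_{2t}|\psi|\|_2 \leq C_{op}\|P_t|\psi|\|_2$, then taking square roots, delivers the claim with $C_Q := C'\sqrt{D}\,C_{op}$.

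The main obstacle is the bookkeeping in the first two steps: one must apply the off-diagonal estimate $(\ref{NeighbourDecay})$ termwise inside the sum over $w$ and verify that the \emph{constant} measure of $\tilde{G}$ is what allows the factor $m(w)$ to be extracted, so that the remaining sum collapses cleanly into $[P_{2t}|\psi|]_{v'}$ rather than some measure-weighted object that would not be a semigroup evaluation. A secondary point requiring care is the reduction from $P_{2t}$ back to $P_t$ via contractivity, together with the observation that $P_t$ preserves nonnegativity (so that $P_t|\psi|$ is itself a legitimate nonnegative element of $\ell^2(V)$), which is what renders the right-hand side of the stated inequality meaningful.
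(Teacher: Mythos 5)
Your proposal is correct and follows essentially the same route as the paper's proof: a pointwise bound on nearest-neighbour differences of $P_t\psi$ obtained from the estimate $(\ref{NeighbourDecay})$ (with the measure factor and the uniformly bounded edge distance from Hypothesis $\ref{Hyp:GraphDistance}$ absorbed into a constant), then summing over edges and reducing $P_{2t}$ to $P_t$ via $P_{2t}=P_tP_t$ and the $\ell^2$-contractivity $(\ref{PtContraction})$. The only cosmetic difference is that your pointwise bound is indexed at the neighbour $v'$ rather than at $v$, so you need the symmetry of the influence topology to count multiplicities in the double sum, a step the paper's indexing makes trivial.
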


\begin{proof}
	To begin, since Hypothesis $\ref{Hyp:DecayRates}$ guarantees that the measure of each vertex is uniformly bounded, we combine this statement with $(\ref{NeighbourDecay})$ and the uniform boundedness of the metric given in Hypothesis $\ref{Hyp:GraphDistance}$ to find that there exists a $C > 0$ (independent of $v,v'$ and $t$) such that 
	\begin{equation}
		|p_t(v,v'') - p_t(v',v'')| \leq C (1 + t)^{-\frac{\eta}{2}} p_{2t}(v,v''),
	\end{equation} 
	for all $v,v'' \in V$, $v' \in N(v)$. Then for any $\psi \in \ell^2(V)$ we have 
	\begin{equation}
		\begin{split}
			|[P_t\psi]_v - [P_t\psi]_{v'}| &\leq \sum_{v'' \in V} |p_t(v,v'') - p_t(v',v'')| |\psi_{v''}| \\
			&\leq C(1 + t)^{-\frac{\eta}{2}} \sum_{v'' \in V} p_{2t}(v,v'') |\psi_{v''}| \\
			&= C(1 + t)^{-\frac{\eta}{2}}[P_{2t}|\psi|]_v. 
		\end{split}
	\end{equation}
	This in turn gives
	\begin{equation}
		\begin{split}
			\sqrt{Q(P_t\psi)} &= \sqrt{\sum_{v \in V} \sum_{v' \in N(v)} |[P_t \psi]_{v'} - [P_t \psi]_v|^2} \\
			&\leq C(1 + t)^{-\frac{\eta}{2}}\sqrt{\sum_{v \in V} \sum_{v' \in N(v)} |[P_{2t}|\psi|]_v|^2} \\
			&\leq CD(1 + t)^{-\frac{\eta}{2}}\sqrt{\sum_{v \in V} |[P_{2t}|\psi|]_v|^2} \\
			&\leq CD(1 + t)^{-\frac{\eta}{2}}\|P_{2t}|\psi|\|_2.  
		\end{split}
	\end{equation}
	 Finally, using the fact that $P_{2t} = P_tP_t$ and the decay estimate $\|P_t\|_{2\to 2} \leq C_{op}$ from $(\ref{PtContraction})$ we arrive at the final result
	 \begin{equation}
	 	\sqrt{Q(P_t\psi)} \leq CC_{op}D(1 + t)^{-\frac{\eta}{2}}\|P_{t}|\psi|\|_2.	
	 \end{equation}
\end{proof} 

Let us now consider an initial condition $\psi_0 \in \ell^1(V)$. We want to prove that if $\|\psi_0\|_1$ is chosen small enough, there exists a solution $\psi(t)$ to $(\ref{PerturbationSystem})$ with $\psi(0) = \psi_0$ belonging to the space
\begin{equation} \label{USpace} 
	\begin{split}
	X = \bigg\{\psi(t)\bigg|\ &\psi(0) = \psi_0,\ \|\psi(t)\|_2 \leq 2C_1(1 + t)^{-\frac{d}{4}}\|\psi_0\|_1\ \ {\rm and}\  \\
		&\sqrt{Q(\psi(t))} \leq 2C_1C_Q(1 + t)^{-\frac{d}{4}-\frac{\eta}{2}}\|\psi_0\|_1,\  \forall \ t\geq 0 \bigg\} 
	\end{split}	
\end{equation}  
where $C_1 > 0$ is the constant taken from the decay estimates $(\ref{DecayEstimates})$ and $C_Q > 0$ is the constant from Lemma $\ref{lem:QForm}$. Prior of showing the existence of a solution to $\ref{PerturbationSystem}$, we show that functions belonging to $X$ indeed satisfy the additional statements of Theorem~\ref{thm:Stability}. We require the following lemma, which has been repurposed from $\cite{IntegralLemma}$.

\begin{lem}[{\em \cite{IntegralLemma}, \S 3, Lemma 3.2}]{\bf (Restated)} \label{lem:IntegralLemma} 
	Let $\gamma_1, \gamma_2$ be positive real numbers. If $\gamma_1,\gamma_2 \neq 1$ or if $\gamma_1 = 1 < \gamma_2$ then there exists a $C_{\gamma_1,\gamma_2} > 0$ such that
	\begin{equation}
		\int_0^t (1 + t - s)^{- \gamma_1}(1 + s)^{-\gamma_2}ds \leq C_{\gamma_1,\gamma_2} (1 + t)^{-\min\{\gamma_1 + \gamma_2 - 1, \gamma_1, \gamma_2\}},
	\end{equation}  	 
	for all $t \geq 0$.
\end{lem}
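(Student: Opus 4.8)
The plan is to prove the estimate by the classical device of splitting the integral at its midpoint and exploiting the fact that on each half one of the two decaying factors is uniformly comparable to $(1+t)$ raised to the relevant power. Writing the integral as $I(t) = I_1(t) + I_2(t)$, where $I_1$ runs over $[0,t/2]$ and $I_2$ over $[t/2,t]$, I would estimate each piece separately and then combine.

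For $I_1$, on the interval $[0,t/2]$ one has $t - s \geq t/2$, so $1 + t - s \geq \tfrac{1}{2}(1+t)$ and hence $(1+t-s)^{-\gamma_1} \leq 2^{\gamma_1}(1+t)^{-\gamma_1}$. Factoring this out reduces the problem to the explicit integral $\int_0^{t/2}(1+s)^{-\gamma_2}\,ds$, which is bounded by a constant when $\gamma_2 > 1$ and grows like $(1+t)^{1-\gamma_2}$ when $\gamma_2 < 1$. Thus $I_1(t) \lesssim (1+t)^{-\gamma_1}$ in the first case and $I_1(t) \lesssim (1+t)^{-(\gamma_1 + \gamma_2 - 1)}$ in the second. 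The substitution $u = t - s$ turns $I_2$ into the same kind of integral with the roles of $\gamma_1$ and $\gamma_2$ interchanged, so $I_2(t) \lesssim (1+t)^{-\gamma_2}$ when $\gamma_1 > 1$ and $I_2(t) \lesssim (1+t)^{-(\gamma_1+\gamma_2-1)}$ when $\gamma_1 < 1$. Since the total is dominated by whichever piece decays more slowly, a short case analysis over the orderings of $\gamma_1, \gamma_2$ relative to $1$ confirms that the governing exponent is exactly $\min\{\gamma_1 + \gamma_2 - 1, \gamma_1, \gamma_2\}$, which is the claimed rate.

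The step I expect to require the most care is the borderline value $\gamma = 1$, which is exactly why the hypothesis restricts to the cases $\gamma_1, \gamma_2 \neq 1$ or $\gamma_1 = 1 < \gamma_2$. When $\gamma_1 = 1$ the half-integral $\int_0^{t/2}(1+u)^{-1}\,du = \ln(1 + t/2)$ contributes a logarithmic factor, giving only $I_2(t) \lesssim (1+t)^{-\gamma_2}\ln(1+t)$ rather than a clean power. The paired assumption $\gamma_2 > 1$ saves the argument: since $\ln(1+t) \lesssim (1+t)^{\gamma_2 - 1}$ for every $\gamma_2 > 1$, the logarithm is absorbed into the spare polynomial decay, leaving $I_2(t) \lesssim (1+t)^{-1} = (1+t)^{-\min\{\gamma_1+\gamma_2-1,\gamma_1,\gamma_2\}}$. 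The excluded configurations $\gamma_2 = 1$ and $\gamma_1 = \gamma_2 = 1$ are precisely those in which no such spare decay is available to absorb the logarithm, so the stated hypotheses are sharp for this method.
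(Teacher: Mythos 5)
Your argument is correct, but note that the paper itself never proves this lemma: it is imported verbatim (hence the label ``Restated'') from Chern and Liu \cite{IntegralLemma}, Lemma 3.2, and used as a black box. Your midpoint splitting at $s = t/2$, with $1+t-s \geq \tfrac{1}{2}(1+t)$ on $[0,t/2]$ and the substitution $u = t-s$ handling $[t/2,t]$ by symmetry, is the standard device for such convolution estimates, so your write-up serves as a self-contained substitute for the citation. The bookkeeping checks out: for $\gamma_1,\gamma_2 > 1$ the two halves give $(1+t)^{-\gamma_1}$ and $(1+t)^{-\gamma_2}$; for $\gamma_1,\gamma_2 < 1$ both give $(1+t)^{-(\gamma_1+\gamma_2-1)}$; in the mixed cases the slower-decaying half dominates; and in every allowed configuration the resulting exponent is $\min\{\gamma_1+\gamma_2-1,\gamma_1,\gamma_2\}$. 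Your handling of $\gamma_1 = 1 < \gamma_2$ by absorbing $\ln(1+t)$ into the spare decay $(1+t)^{-(\gamma_2-1)}$ is also fine. One small correction to your closing remark on sharpness: the configuration $\gamma_2 = 1 < \gamma_1$ is \emph{not} a genuine failure of the estimate --- the integral is symmetric under $s \mapsto t-s$, so that case holds by the same absorption argument with the roles of $\gamma_1$ and $\gamma_2$ exchanged; the restated hypothesis simply omits it, presumably because every application in the paper (see Proposition \ref{prop:Decays}) takes the second exponent $\gamma_2 = \tfrac{d}{2}+\eta > 1$. The genuinely excluded cases are $\gamma_1=\gamma_2=1$ and $\gamma_i = 1$ with the other exponent below $1$, where the logarithm cannot be absorbed and the stated power bound is actually false.
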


We now provide the necessary results to proving Theorem~\ref{thm:Stability}. We first define the mapping 
\begin{equation} \label{TMapping}
	T\psi(t) = P_t\psi_0 + \int_0^t P_{t-s}\mathcal{G}(\psi(s))ds,
\end{equation}
where $t \geq 0$. Notice that a fixed point of this mapping belonging to $X$ for all $t \geq 0$ will satisfy the differential equation $(\ref{PerturbationSystem})$. We present the following proposition.

\begin{prop} \label{prop:Decays} 
	Let $t_0 > 0$ and assume that $\psi(t)$ satisfies 
	\begin{equation}
		\|\psi(t)\|_2 \leq 2C_1(1 + t)^{-\frac{d}{4}}\|\psi_0\|_1
	\end{equation}
	and
	\begin{equation}
		 \sqrt{Q(\psi(t))} \leq 2C_1C_Q(1 + t)^{-\frac{d}{4} - \frac{\eta}{2}}\|\psi_0\|_1
	\end{equation}
	for all $0 \leq t \leq t_0$. Then there exists an $\varepsilon_0 > 0$, independent of $t_0$, such that if $\|\psi_0\|_1 \leq \varepsilon_0$ we have 
	\begin{subequations} \label{MappingDecay1}
		\begin{align}
			\|T\psi(t)\|_2 &\leq 2C_1(1 + t)^{-\frac{d}{4}}\|\psi_0\|_1, \\
			\sqrt{Q(T\psi(t))} &\leq 2C_1C_Q(1 + t)^{-\frac{d}{4} - \frac{\eta}{2}}\|\psi_0\|_1
		\end{align}
	\end{subequations}
	for all $0 \leq t \leq t_0$. Furthermore, there exists a $C_\infty > 0$, independent of $t_0$, such that
		\begin{subequations} \label{MappingDecay2}
			\begin{align} 
				\|T\psi(t)\|_1 &\leq C_\infty \|\psi_0\|_1, \label{ell1NormBound}, \\
				\|T\psi(t)\|_\infty &\leq C_\infty (1 + t)^{-\frac{d}{2}}\|\psi_0\|_1.
			\end{align}
		\end{subequations} 
	for all $0 \leq t \leq t_0$.
\end{prop}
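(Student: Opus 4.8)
The plan is to bound each of the four quantities in $(\ref{MappingDecay1})$ and $(\ref{MappingDecay2})$ by splitting the mapping $(\ref{TMapping})$ into its linear flow $P_t\psi_0$ and the Duhamel integral $\int_0^t P_{t-s}\mathcal{G}(\psi(s))\,ds$, estimating each piece with the semigroup decay estimates $(\ref{DecayEstimates})$, Lemma~\ref{lem:Boundedness1}, and Lemma~\ref{lem:QForm}, and then closing the bound via Lemma~\ref{lem:IntegralLemma}. The driving mechanism is that $\mathcal{G}$ vanishes to second order at the origin: Lemma~\ref{lem:Boundedness1} gives $\|\mathcal{G}(\psi(s))\|_1 \leq KQ(\psi(s))$, so the hypothesized decay of $\sqrt{Q(\psi(s))}$ forces every integral term to carry a factor $\|\psi_0\|_1^2$. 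Choosing $\|\psi_0\|_1 \leq \varepsilon_0$ small then lets each integral contribution be absorbed into the leading linear term, upgrading the coefficient $C_1$ (respectively $C_1C_Q$) to $2C_1$ (respectively $2C_1C_Q$).

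For the $\ell^2$ estimate I would write $\|T\psi(t)\|_2 \leq \|P_t\|_{1\to 2}\|\psi_0\|_1 + \int_0^t \|P_{t-s}\|_{1\to 2}\,\|\mathcal{G}(\psi(s))\|_1\,ds$, insert $(\ref{2Decay})$ and Lemma~\ref{lem:Boundedness1}, and use the assumed bound $Q(\psi(s)) \leq (2C_1C_Q)^2(1+s)^{-\frac{d}{2}-\eta}\|\psi_0\|_1^2$ to reduce the integrand to a constant multiple of $(1+t-s)^{-\frac{d}{4}}(1+s)^{-\frac{d}{2}-\eta}\|\psi_0\|_1^2$; Lemma~\ref{lem:IntegralLemma} with $\gamma_1 = \tfrac{d}{4}$, $\gamma_2 = \tfrac{d}{2}+\eta$ then yields the decay rate $(1+t)^{-\frac{d}{4}}$. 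The $Q$-bound is analogous but exploits the seminorm (triangle) property of $\sqrt{Q(\cdot)}$ to split the two contributions, applying Lemma~\ref{lem:QForm} to each: the linear piece gives $\sqrt{Q(P_t\psi_0)} \leq C_Q(1+t)^{-\frac{\eta}{2}}\|P_t|\psi_0|\|_2 \leq C_1C_Q(1+t)^{-\frac{d}{4}-\frac{\eta}{2}}\|\psi_0\|_1$, while the integrand reduces to a multiple of $(1+t-s)^{-\frac{d}{4}-\frac{\eta}{2}}(1+s)^{-\frac{d}{2}-\eta}\|\psi_0\|_1^2$, to which Lemma~\ref{lem:IntegralLemma} with $\gamma_1 = \tfrac{d}{4}+\tfrac{\eta}{2}$, $\gamma_2 = \tfrac{d}{2}+\eta$ applies. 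The $\ell^\infty$ estimate uses $(\ref{InftyDecay})$ in place of $(\ref{2Decay})$ together with Lemma~\ref{lem:IntegralLemma} for $\gamma_1 = \tfrac{d}{2}$, $\gamma_2 = \tfrac{d}{2}+\eta$, and the $\ell^1$ estimate is the simplest: $(\ref{PtContraction})$ gives $\|P_{t-s}\|_{1\to 1} \leq C_{op}$, so the integral is dominated by a constant multiple of $\int_0^t (1+s)^{-\frac{d}{2}-\eta}\,ds$, which is finite and $t$-independent because $\tfrac{d}{2}+\eta > 1$.

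The crux of the argument, and the one step requiring genuine care, is verifying in each application of Lemma~\ref{lem:IntegralLemma} that the output exponent $\min\{\gamma_1+\gamma_2-1,\gamma_1,\gamma_2\}$ equals the target $\gamma_1$. In every case this reduces to the single inequality $\tfrac{d}{2}+\eta \geq 1$ (for the $\ell^2$ bound, for instance, one needs $\tfrac{3d}{4}+\eta-1 \geq \tfrac{d}{4}$), which is precisely where the standing assumption $d \geq 2$ of Hypothesis~\ref{Hyp:DecayRates} enters: it guarantees $\tfrac{d}{2}\geq 1$ and hence that the cross exponent $\gamma_1+\gamma_2-1$ never falls below the leading exponent $\gamma_1$. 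Once these exponents are confirmed, all constants arising from $(\ref{DecayEstimates})$, Lemma~\ref{lem:Boundedness1}, Lemma~\ref{lem:QForm}, and Lemma~\ref{lem:IntegralLemma} are independent of $t_0$, so collecting them into a single constant and choosing $\varepsilon_0$ small enough that this constant times $\|\psi_0\|_1$ remains below $1$ simultaneously closes all four inequalities uniformly on $0 \leq t \leq t_0$.
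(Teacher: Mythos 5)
Your proposal is correct and follows essentially the same route as the paper's proof: the same splitting of $T$ into $P_t\psi_0$ plus the Duhamel integral, the same use of Lemma~\ref{lem:Boundedness1}, Lemma~\ref{lem:QForm}, the decay estimates $(\ref{DecayEstimates})$, and Lemma~\ref{lem:IntegralLemma} with exactly the exponent pairs $(\tfrac{d}{4},\tfrac{d}{2}+\eta)$, $(\tfrac{d}{4}+\tfrac{\eta}{2},\tfrac{d}{2}+\eta)$, $(\tfrac{d}{2},\tfrac{d}{2}+\eta)$, and the convergent $\ell^1$ integral. You also correctly isolate where $d\geq 2$ enters; the only detail the paper makes explicit that you leave implicit is fixing $\|\psi_0\|_1\leq \tfrac{1}{2C_1}$ at the outset so that $\|\psi(t)\|_2\leq 1$, which makes the constant $K$ of Lemma~\ref{lem:Boundedness1} uniform.
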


\begin{proof}
	Begin by considering $\|\psi_0\|_1 \leq \frac{1}{2C_1}$ so that $\|\psi(t)\|_2 \leq 1$ for all $0 \leq t \leq t_0$. This in turn guarantees the existence of a uniform $K > 0$ such that the results of Lemma $\ref{lem:Boundedness1}$ hold. Then using the decay estimate $(\ref{2Decay})$ we now have
	\begin{equation}
		\begin{split}
			\|T\psi(t)\|_2 &\leq \|P_t\psi_0\|_2 + \int_0^t \|P_{t-s}\mathcal{G}(\psi(s))\|_2 ds \\
			&\leq C_1(1+t)^{-\frac{d}{4}}\|\psi_0\|_1 + C_1\int_0^t (1+t-s)^{-\frac{d}{4}}\|\mathcal{G}(\psi(s))\|_2 ds \\
			&\leq C_1(1+t)^{-\frac{d}{4}}\|\psi_0\|_1 + C_1K\int_0^t (1+t-s)^{-\frac{d}{4}}Q(\psi(s)) ds \\
			&\leq C_1(1+t)^{-\frac{d}{4}}\|\psi_0\|_1 + 4C_1^3C_Q^2K \int_0^t(1+t-s)^{-\frac{d}{4}}(1 + s)^{-\frac{d}{2} - \eta}\|\psi_0\|_1^2 ds    
		\end{split}
	\end{equation} 
	for all $0 \leq t \leq t_0$. From Lemma $\ref{lem:IntegralLemma}$, there exists $C_{\frac{d}{4},\frac{d}{2} + \eta} > 0$ such that
	\begin{equation}
		\int_0^t (1 + t - s)^{-\frac{d}{4}}(1 + t)^{-\frac{d}{2} - \eta}ds \leq C_{\frac{d}{4},\frac{d}{2} + \eta}(1 + t)^{-\frac{d}{4}}.	
	\end{equation}
	Thus,
	\begin{equation}
		\|T\psi(t)\|_2 \leq [C_1 + 4C_1^3C_Q^2C_{\frac{d}{4},\frac{d}{2} + \eta}K\|\psi_0\|_1](1 + t)^{-\frac{d}{4}}\|\psi_0\|_1.	
	\end{equation}
	
	Similarly, we use the bounds given in Lemma $\ref{lem:QForm}$ to obtain
	\begin{equation}
  		\begin{split}
   	 		\sqrt{Q(T\psi(t))} &\leq \sqrt{Q(P_t\psi_0)} + \int_0^t \sqrt{Q(P_{t-s}\mathcal{G}(\psi(s)))}ds \\
    			&\leq C_Q(1+t)^{-\frac{\eta}{2}}\|P_{t}\psi_0\|_2 + C_Q \int_0^t (1 + t - s)^{-\frac{\eta}{2}}\|P_{t-s}|\mathcal{G}(\psi(s))\||_2 ds\\
    			&\leq C_1C_Q(1 + t)^{-\frac{d}{4} - \frac{\eta}{2}}\|\psi_0\|_1 \\
    			&\qquad + 4C_1^3C_Q^3K\int_0^t(1+t-s)^{-\frac{d}{4} - \frac{\eta}{2}}(1 + s)^{-\frac{d}{2} - \eta}\|\psi_0\|_1^2 ds.  
  		\end{split}
	\end{equation}  
	From Lemma $\ref{lem:IntegralLemma}$, there exists a $C_{\frac{d}{4} + \frac{\eta}{2}, \frac{d}{2} + \eta} > 0$ such that
	\begin{equation}
		\int_0^t(1+t-s)^{-\frac{d}{4} - \frac{\eta}{2}}(1 + s)^{-\frac{d}{2} - \eta} ds \leq C_{\frac{d}{4} + \frac{\eta}{2}, \frac{d}{2} + \eta}(1 + t)^{-\frac{d}{4} - \frac{\eta}{2}}.,	
	\end{equation}
	which implies that 
	\begin{equation}
		\sqrt{Q(T\psi(t))} \leq [C_1C_Q + 4C_1^3C_Q^3C_{\frac{d}{4} + \frac{\eta}{2}, \frac{d}{2} + \eta}K](1 + t)^{-\frac{d}{4} - \frac{\eta}{2}}\|\psi_0\|_1	
	\end{equation}
	Therefore, taking
		\begin{equation}
			\varepsilon_0 := \min\bigg\{\frac{1}{2C_1},\frac{1}{4C_1^3C_Q^2C_{\frac{d}{4},\frac{d}{2} + \eta}K},\frac{1}{4C_1^3C_Q^2C_{\frac{d}{4},\frac{d}{2} + \eta}K}\bigg\}
		\end{equation}	
	gives the decay estimates (\ref{MappingDecay1}).
	
	We now turn to proving the remaining decay estimates (\ref{MappingDecay2}). First, note that $(\ref{PtContraction})$ details that $\|P_t\|_{1 \to 1} \leq C_{op}$ for all $t \geq 0$. Then, for all $0 \leq t \leq t_0$ we have
	\begin{equation}
		\begin{split}
		\|T\psi(t)\|_1 &\leq \|P_t\psi_0\|_1 + \int_0^t \|P_{t-s}\mathcal{G}(\psi(s))\|_1ds \\
		&\leq C_{op}\|\psi_0\|_1 + C_{op}\int_0^t \|\mathcal{G}(\psi(s))\|_1 ds \\
		&\leq C_{op}\|\psi_0\|_1 + C_{op}K\int_0^t Q(\psi(s))ds \\
		&\leq C_{op}\|\psi_0\|_1 + 4C_{op}C_1^2C^2_QK\int_0^t (1 + s)^{-\frac{d}{2} - \eta}\|\psi_0\|_1^2 ds \\
		&= C_{op}\|\psi_0\|_1 + \frac{4C_{op}C_1^2C^2_QK}{\frac{d}{2} + \eta}[1 - (1 + t)^{1 - \frac{d}{2} - \eta}]\|\psi_0\|_1^2.     
		\end{split}	
	\end{equation} 
	Since $d \geq 2$ and $\eta > 0$ we have that $[1 - (1 + t)^{1 - \frac{d}{2} - \eta}] \leq 1$ for all $t \geq 0$. Thus, $\|T\psi(t)\|_1$ is uniformly bounded by a constant multiple of $\|\psi_0\|_1$, depending only on $\varepsilon_0$, for all $0 \leq t \leq t_0$, proving the first bound of (\ref{MappingDecay2}). 
	
	The $\ell^\infty(V)$ decay follows in a similar manner to above, although we now use the decay estimate $(\ref{InftyDecay})$. In this case we now have
	\begin{equation}
		\begin{split}
			\|T\psi(t)\|_\infty &\leq \|P_t\psi_0\|_\infty + \int_0^t \|P_{t-s}\mathcal{G}(\psi(s))\|_\infty ds \\ 
			&\leq C_1(1 + t)^{-\frac{d}{2}}\|\psi_0\|_1 + C_1\int_0^t (1 + t - s)^{-\frac{d}{2}} \|\mathcal{G}(\psi(s))\|_1 ds \\
			&\leq C_1(1 + t)^{-\frac{d}{2}}\|\psi_0\|_1 + 4C_1^3C^2_QK\int_0^t (1 + t - s)^{-\frac{d}{2}}(1 + s)^{-\frac{d}{2} - \eta}\|\psi_0\|_1^2 ds. 
		\end{split}	
	\end{equation} 
	Applying Lemma $\ref{lem:IntegralLemma}$ with $\gamma_1 = \frac{d}{2} \geq 1$ and $\gamma_2 = \frac{d}{2} + \eta > 1$ gives that there exists $C_{\frac{d}{2},\frac{d}{2} + \eta} > 0$ such that
	\begin{equation}
		\int_0^t (1 + t - s)^{-\frac{d}{2}}(1 + s)^{-\frac{d}{2} - \eta} ds \leq C_{\frac{d}{2},\frac{d}{2} + \eta}(1 + t)^{-\frac{d}{2}}.	
	\end{equation} 
	Therefore, we have the decay estimate
	\begin{equation}
		\|T\psi(t)\|_\infty \leq [C_1 + 4C_1^3C_{\frac{d}{2},\frac{d}{2} + \eta}C^2_QK\|\psi_0\|_1] (1 + t)^{-\frac{d}{2}}\|\psi_0\|_1,	
	\end{equation}
	which holds for all $0 \leq t \leq t_0$, and is independent of $t_0$. This concludes the proof of the proposition. 
\end{proof} 

One can see that Proposition~\ref{prop:Decays} gives that
\begin{equation}
	T:X \to X
\end{equation}
is well-defined when $\|\psi_0\|_1 \leq \varepsilon$. It should be noted that the further estimates given in (\ref{MappingDecay2}) show that upon obtaining a fixed point of $T$ in $X$, we necessarily satisfy all the estimates stated in Theorem~\ref{thm:Stability}. Moreover, recall that $\ell^1(V) \subsetneq \ell^p(V)$ for all $1 < p \leq \infty$, and therefore Proposition~\ref{prop:Decays} dictates that a fixed point of $T$ in $X$ immediately belongs to $\ell^p(V)$ for all $p\in[0,\infty]$. We now show the existence of a fixed point of $T$, which therefore then in turn proves Theorem~\ref{thm:Stability}. 

\begin{proof}[Proof of Theorem~\ref{thm:Stability}] 
	First, since $\mathcal{G}'(0) = 0$, we may consider a $\delta > 0$ such that 
	\begin{equation} \label{LipschitzConstant}
		\sup_{\|x\|_2 \leq \delta} \|\mathcal{G}'(x)\|_{2\to 2} \leq \frac{1}{2C_{op}}.
	\end{equation}
	Notice that such a uniform bound in guaranteed by Hypotheses $\ref{Hyp:Neighbours}$, $\ref{Hyp:Linearization}$ and the smoothness of the function $H$. Then when $\|\psi_0\|_1 \leq \frac{\delta}{2C_1}$, we have that $2C_1(1 + t)^{-\frac{d}{4}}\|\psi_0\|_1 \leq \delta$. Therefore, let 
	\begin{equation} \label{Epsilon}
		\varepsilon := \min\bigg\{\varepsilon_0,\frac{\delta}{2C_1}\bigg\} > 0,
	\end{equation}
	where we use $\varepsilon_0$ to denote the constant required by Proposition~\ref{prop:Decays}. We therefore take $\|\psi_0\|_1 \leq \varepsilon$.
	
	Now consider the space 
	\begin{equation} \label{U1Space} 
		\begin{split}
			X_1 = \bigg\{\psi(t),\ t\in [0,1]\bigg|\ &\psi(0) = \psi_0,\ \|\psi(t)\|_2 \leq 2C_1(1 + t)^{-\frac{d}{4}}\|\psi_0\|_1\ \ {\rm and}\  \\
			&\sqrt{Q(\psi(t))} \leq 2C_1C_Q(1 + t)^{-\frac{d}{4}-\frac{\eta}{2}}\|\psi_0\|_1,\ 0 \leq t \leq 1 \bigg\}. 
		\end{split}	
	\end{equation}
	Then by our choice of $\varepsilon$ in $(\ref{Epsilon})$ we have that
	\begin{equation}
		T: X_1 \to X_1
	\end{equation}  
	is well-defined for $0 \leq t \leq 1$. Let us consider the metric on the space $X_1$ given by
	\begin{equation} \label{U1Metric}
		\rho_{X_1}(\psi_1(t),\psi_2(t)) := \sup_{t \in [0,1]} \|\psi_1(t) - \psi_2(t)\|_2,
	\end{equation}
	for any $\psi_1(t),\psi_2(t) \in X_1$. Using the fact that $\|P_t\|_{2 \to 2} \leq C_{op}$ for all $t \geq 0$, for any $\psi_1(t),\psi_2(t)\in X_1$ this metric gives 
	\begin{equation}
		\begin{split}
			\rho_{X_1}(T\psi_1(t),T\psi_2(t)) &\leq \sup_{t \in [0,1]} \int_0^t \|P_t[\mathcal{G}(\psi_1(s)) - \mathcal{G}(\psi_2(s))]\|_2ds \\
			&\leq	 \sup_{t \in [0,1]} C_{op}\int_0^t \|\mathcal{G}(\psi_1(s)) - \mathcal{G}(\psi_2(s))\|_2ds \\
			&\leq \sup_{t \in [0,1]} \frac{1}{2}\int_0^t \|\psi_1(s) - \psi_2(s)\|_2ds \\
			&\leq \sup_{t \in [0,1]} \frac{t}{2} \rho_{X_1}(\psi_1(t),\psi_2(t)) \\ 
			&\leq \frac{1}{2} \rho_{X_1}(\psi_1(t),\psi_2(t)),
		\end{split}
	\end{equation}
	where we have used the Lipschitz property 
	\[
		\|\mathcal{G}(\psi_1(t)) - \mathcal{G}(\psi_2(t))\|_2 \leq \frac{1}{2C_{op}}\|\psi_1(t) - \psi_2(t)\|_2
	\]	
	which follows from $(\ref{LipschitzConstant})$ and our choice of $\varepsilon > 0$. Thus, $T: X_1 \to X_1$ is a contraction. We provide the following claim, which will be proved after we have completed this proof.
	
	\begin{claim} \label{claim:U1Complete}
		$X_1$ is complete with respect to the metric $(\ref{U1Metric})$.
	\end{claim}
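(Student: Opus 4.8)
The plan is to realize $X_1$ as a closed subset of a complete metric space, since every closed subset of a complete metric space is itself complete. The natural ambient space is $B([0,1],\ell^2(V))$, the space of bounded functions $\psi:[0,1]\to\ell^2(V)$ equipped with the norm $\|\psi\| = \sup_{t\in[0,1]}\|\psi(t)\|_2$; the metric $(\ref{U1Metric})$ is exactly the restriction of this norm-induced metric to $X_1$. Because $\ell^2(V)$ is a Hilbert space and therefore complete, $B([0,1],\ell^2(V))$ is a Banach space, so the whole problem reduces to showing that $X_1$ is closed in $B([0,1],\ell^2(V))$. Note that the defining bound $\|\psi(t)\|_2\le 2C_1\|\psi_0\|_1$ (the $t=0$ case gives the largest value) ensures that every element of $X_1$ is genuinely bounded, so $X_1$ really does sit inside this ambient space and $\rho_{X_1}$ is finite.

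To prove closure, I would take a sequence $\{\psi^{(n)}\}\subset X_1$ with $\psi^{(n)}\to\psi$ in the metric $(\ref{U1Metric})$ and verify that the limit $\psi$ satisfies each of the three conditions defining $X_1$. The key observation is that convergence in the sup-norm metric forces $\psi^{(n)}(t)\to\psi(t)$ in $\ell^2(V)$ for every fixed $t\in[0,1]$. The initial condition is immediate: since $\psi^{(n)}(0)=\psi_0$ for all $n$ and $\|\psi^{(n)}(0)-\psi(0)\|_2\to 0$, we get $\psi(0)=\psi_0$. The $\ell^2$ growth bound passes to the limit by continuity of the norm, since for each fixed $t$ we have $\|\psi(t)\|_2=\lim_n\|\psi^{(n)}(t)\|_2\le 2C_1(1+t)^{-\frac{d}{4}}\|\psi_0\|_1$.

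The only step that requires a genuine argument is the bound involving $Q$, and this is where I expect the main (though modest) obstacle to lie. Here I would exploit the fact, recorded just after $(\ref{Quadratic})$, that $\sqrt{Q(\cdot)}$ is a seminorm satisfying $\sqrt{Q(x)}\le\sqrt{2D}\,\|x\|_2$ by $(\ref{QuadBound})$. The seminorm (triangle) inequality together with this bound yields $|\sqrt{Q(x)}-\sqrt{Q(y)}|\le\sqrt{Q(x-y)}\le\sqrt{2D}\,\|x-y\|_2$, so $\sqrt{Q(\cdot)}$ is Lipschitz continuous with respect to $\|\cdot\|_2$. Consequently $\sqrt{Q(\psi^{(n)}(t))}\to\sqrt{Q(\psi(t))}$ for each fixed $t$, and the uniform bound $\sqrt{Q(\psi^{(n)}(t))}\le 2C_1C_Q(1+t)^{-\frac{d}{4}-\frac{\eta}{2}}\|\psi_0\|_1$ is inherited by $\psi$ in the limit. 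Having verified all three defining conditions, $\psi\in X_1$, so $X_1$ is closed in $B([0,1],\ell^2(V))$ and therefore complete, which proves the claim.
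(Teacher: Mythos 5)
Your proposal is correct and is essentially the paper's own argument in different packaging: the paper runs the Cauchy-sequence argument directly (pointwise limits in $\ell^2(V)$ by completeness, then verification that the three defining bounds pass to the limit), whereas you outsource the existence of the limit to completeness of the ambient space $B([0,1],\ell^2(V))$ and verify the same three bounds as a closedness statement. Your explicit Lipschitz estimate $|\sqrt{Q(x)}-\sqrt{Q(y)}|\le\sqrt{Q(x-y)}\le\sqrt{2D}\,\|x-y\|_2$ is precisely the combination of the seminorm triangle inequality and $(\ref{QuadBound})$ that the paper invokes for the $Q$-condition (and it states the constant correctly as $\sqrt{2D}$, where the paper's displayed inequality writes $2D$).
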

	
	Therefore by the contraction mapping principle there exists a unique fixed point, $\psi_1^*(t) \in X_1$. That is, we have identified a unique solution to the differential equation $(\ref{PerturbationSystem})$ satisfying the decay rates of the space $X$ for $t \in [0,1]$. 
	
	We now proceed by induction. Let us assume that for some positive integer $n \geq 1$ there exists a unique solution to the differential equation $(\ref{PerturbationSystem})$ satisfying the decay rates of the space $X$ for $t \in [0,n]$. That is, there exists a unique fixed point $\psi_n^*(t)$ to $T$ on the interval $[0,n]$ with the properties that $\psi_n^*(0) = \psi_0$, $\|\psi_n^*(t)\|_2 \leq 2C_1(1 + t)^{-\frac{d}{4}}\|\psi_0\|_1$ and $\sqrt{Q(\psi_n^*(t))} \leq 2C_1C_Q(1 + t)^{-\frac{d}{4}-\frac{\eta}{2}}\|\psi_0\|_1$ for all $t \in [0,n]$. We wish to use this function to extend to a solution on $[0,n+1]$. 
	 
	 Begin by defining the space
	 \begin{equation}
	 	\begin{split}
			X_{n+1} = \bigg\{&\psi(t),\ t\in [0,n+1]\bigg|\ \psi(t) = \psi_n^*(t)\ t \in [0,n],\ \|\psi(t)\|_2 \leq 2C_1(1 + t)^{-\frac{d}{4}}\|\psi_0\|_1\\
			& \ {\rm and}\ \sqrt{Q(\psi(t))} \leq 2C_1C_Q(1 + t)^{-\frac{d}{4}-\frac{\eta}{2}}\|\psi_0\|_1, \ n \leq t \leq n+1 \bigg\}. 
		\end{split}		
	 \end{equation}
Again, from out choice of $\varepsilon > 0$, Proposition~\ref{prop:Decays} guarantees that 
\begin{equation}
	T:X_{n+1} \to X_{n+1}
\end{equation}
is well-defined. Let us consider the metric on $X_{n+1}$ given by
\begin{equation} \label{Un+1Metric}
	\rho_{X_{n+1}}(\psi_1(t),\psi_2(t)) := \sup_{t \in [0,n+1]} \|\psi_1(t) - \psi_2(t)\|_2.
\end{equation}
for any $\psi_1(t),\psi_2(t) \in X_{n+1}$. A proof nearly identical to that of the proof of Claim $\ref{claim:U1Complete}$ shows that $X_{n+1}$ is complete with respect to the metric $(\ref{Un+1Metric})$. 

Now, for any $\psi_1(t),\psi_2(t) \in X_{n+1}$ we have that $\psi_1(t) = \psi_2(t)$ for all $t \in [0,n]$. This then gives 
	\begin{equation}
		\begin{split}
			\rho_{X_{n+1}}(T\psi_1(t),T\psi_2(t)) &\leq \sup_{t \in [n,n+1]} \int_n^t \|P_t[\mathcal{G}(\psi_1(s)) - \mathcal{G}(\psi_2(s))]\|_2ds \\
			&\leq \sup_{t \in [n,n+1]} \frac{1}{2}\int_n^t \|\psi_1(s) - \psi_2(s)\|_2ds \\
			&\leq \sup_{t \in [n,n+1]} \frac{t - n}{2} \rho_{X_{n+1}}(\psi_1(t),\psi_2(t)) \\
			&\leq \frac{1}{2} \rho_{X_{n+1}}(\psi_1(t),\psi_2(t)),
		\end{split}
	\end{equation} 
	showing that $T:X_{n+1}\to X_{n+1}$ is a contraction. By the contraction mapping principle, there exists a unique solution $\psi^*_{n+1}(t)$ which extends the solution $\psi_n^*(t)$ onto the interval $[0,n+1]$ and satisfying the required decay rates on this interval. Therefore, there exists a solution to the differential equation $(\ref{PerturbationSystem})$ for arbitrarily large values of $t$ and satisfies the decay rates of the space $X$. Per the discussion following Proposition~\ref{prop:Decays}, this therefore gives the results of Theorem~\ref{thm:Stability}.
\end{proof} 

We conclude with the proof of Claim $\ref{claim:U1Complete}$.

\begin{proof}[Proof of Claim~\ref{claim:U1Complete}] 
	Let $\{\psi_n(t)\}_{n = 1}^\infty$ be a Cauchy sequence in $X_1$. For each fixed $t \in [0,1]$, $\{\psi_n(t)\}_{n=1}^\infty$ forms a Cauchy sequence in the complete space $\ell^2(V)$. Hence, there exists a pointwise limit to the sequence, denoted $\psi(t)$ which belongs to $\ell^2(V)$ for all $t \in [0,1]$. Furthermore, the uniformity of the metric $\rho_{X_1}$ further implies that 
	\begin{equation} \label{X1Convergence}
		\lim_{n \to \infty} \rho_{X_1}(\psi_n(t),\psi(t)) = 0.
	\end{equation}
	It therefore only remains to show that $\psi(t) \in X_1$. 
	
	Let $\varepsilon > 0$ be arbitrary. From $(\ref{X1Convergence})$ there exists $N \geq 0$ such that for all $n \geq N$ and $t \in [0,1]$ we have
	\begin{equation}
		\|\psi_n(t) - \psi(t)\|_2 < \varepsilon.
	\end{equation}
	Then for all $t\in [0,1]$ we have
	\begin{equation}
		\|\psi(t)\|_2 \leq \|\psi_n(t) - \psi(t)\|_2 + \|\psi_n(t)\|_2 < \varepsilon + 2C_1(1 + t)^{-\frac{d}{4}}\|\psi_0\|_1.
	\end{equation}
	Letting $\varepsilon \to 0^+$ gives that $\|\psi(t)\|_2 \leq 2C_1(1 + t)^{-\frac{d}{4}}\|\psi_0\|_1$ for all $t \in [0,1]$.
	
Now for the final condition. Notice that $(\ref{QuadBound})$
dictates that since $\rho_{X_1}(\psi_n(t),\psi(t)) \to 0$
as $n \to \infty$, we have that 
\begin{equation}
  \lim_{n \to \infty} \sup_{t\in [0,1]}\sqrt{Q(\psi_n(t) - \psi(t))} \leq 2D\lim_{n \to \infty}\rho_{X_1}(\psi_n(t),\psi(t))  = 0.
\end{equation} 
	Thus, we merely repeat the previous arguments showing the bounds on $\|\psi(t)\|_2$ to obtain the appropriate bound on $\sqrt{Q(\psi(t))}$. This completes the proof of the claim. 
\end{proof} 

\section{Applications} \label{sec:Applications} 

In this section we will apply the results of Theorem~\ref{thm:Stability} to a phase system which was analyzed in $\cite{MyWork}$. Here we will take $V = \mathbb{Z}^2$ and consider the system of coupled oscillators given by
\begin{equation} \label{MyPhaseSystem1}
	\dot{\theta}_{i,j} = \omega + \sin(\theta_{i+1,j} - \theta_{i,j}) + \sin(\theta_{i-1,j} - \theta_{i,j}) + \sin(\theta_{i,j+1} - \theta_{i,j}) + \sin(\theta_{i,j-1} - \theta_{i,j}),
\end{equation}
for all $(i,j)\in\mathbb{Z}^2$ and a fixed $\omega \in \mathbb{R}$. That is, for each $(i,j) \in \mathbb{Z}^2$, we have that $N((i,j)) = \{(i\pm1,j),(i,j\pm1)\}$. For the ease of notation, we will follow $\cite{MyWork}$ to compactly write $(\ref{MyPhaseSystem1})$ as
\begin{equation} \label{MyPhaseSystem2}
	\dot{\theta}_{i,j} = \omega + \sum_{i',j'} \sin(\theta_{i',j'} - \theta_{i,j}),	
\end{equation}
for all $(i,j) \in \mathbb{Z}^2$. 

Our interest lies in phase-locked solutions to $(\ref{MyPhaseSystem2})$ of the form $(\ref{PhaseAnsatz})$ with $\Omega = \omega$. Solving for the phase-lags, $\bar{\theta} = \{\bar{\theta}_{i,j}\}_{(i,j)\in\mathbb{Z}^2}$ requires one to solve 
\begin{equation} \label{MyPhaseLags}
	\sum_{i',j'} \sin(\bar{\theta}_{i',j'} - \bar{\theta}_{i,j}) = 0	
\end{equation}
for all $(i,j) \in \mathbb{Z}^2$. Having obtained a solution to $(\ref{MyPhaseLags})$, one may follow the general framework of Section $\ref{sec:Perturbation}$ by applying a slight perturbation $\psi = \{\psi_{i,j}\}_{(i,j)\in\mathbb{Z}^2}$ to the phase-locked solution to arrive at the perturbation system
\begin{equation} \label{PerturbationSystem2}
	\dot{\psi}_{i,j} = \sum_{i',j'}  \sin(\bar{\theta}_{i',j'} + \psi_{i',j'} - \bar{\theta}_{i,j} - \psi_{i,j}) 
\end{equation} 
for all $(i,j)\in\mathbb{Z}^2$. The reader is reminded that $\psi = 0$ is a steady-state solution to this perturbation system and its stability corresponds to the stability of the phase-locked solution $\{\omega t + \bar{\theta}_{i,j}\}_{(i,j)\in\mathbb{Z}^2}$. 

Throughout the following subsections we will provide examples of phase-lags that solve $(\ref{MyPhaseLags})$ and thus provide phase-locked solutions to $(\ref{MyPhaseSystem2})$. Furthermore, we will demonstrate how one may apply Theorem~\ref{thm:Stability} to find that these phase-locked solutions are stable. It should immediately be noted that Hypothesis $\ref{Hyp:Neighbours}$ is satisfied by our phase system $(\ref{MyPhaseSystem2})$ since each oscillator is influenced by exactly the four oscillators corresponding to the four nearest-neighbours on the lattice $\mathbb{Z}^2$. Furthermore, since the coupling function $H(x) = \sin(x)$ is an odd function, $H'(x) = \cos(x)$ is an even function and therefore the symmetry requirement $(\ref{PositiveWeights})$ will be met, although the positivity requirement will need to be checked on a case-by-case basis.

\subsection{Stability of the Trivial Solution} \label{subsec:Trivial} 

We begin by illustrating an application of Theorem~\ref{thm:Stability} with the simplest solution to $(\ref{MyPhaseLags})$, the trivial solution. Since $\sin(0) = 0$, taking $\bar{\theta}_{i,j} = 0$ for all $(i,j) \in \mathbb{Z}^2$ leads to a solution to $(\ref{MyPhaseLags})$. The perturbation system in the variables $\psi = \{\psi_{i,j}\}_{(i,j)\in\mathbb{Z}^2}$ then becomes
\begin{equation}
	\dot{\psi}_{i,j} = \sum_{i',j'} \sin(\psi_{i',j'} - \psi_{i,j})
\end{equation} 
for all $(i,j)\in\mathbb{Z}^2$. Linearizing this system of ordinary differential equations about the steady-state $\psi = 0$ results in the linear operator, denoted $L_1$, acting on the sequences $x = \{x_{i,j}\}_{(i,j) \in \mathbb{Z}^2}$ by 
\begin{equation}
	[L_1x]_{i,j} = \sum_{i',j'} (x_{i',j'} - x_{i,j}),
\end{equation}
for every $(i,j)\in\mathbb{Z}^2$, which is exactly the linearization (\ref{LinearizationEx}) discussed in Section~\ref{sec:Perturbation}. Here we interpret the underlying graph, denoted $G_1 = (\mathbb{Z}^2,E_1,w_1)$, to have vertex set $\mathbb{Z}^2$ and an edge set $E_1$ containing all nearest-neighbour interactions between vertices. The weights of each edge are identically given by $1$, thus giving that the measure of each vertex is identically $4$. An illustration of $G_1$ is given in Figure $\ref{fig:TrivialGraph}$ for visual reference. 

\begin{figure} 
	\centering
	\includegraphics[width = 7cm]{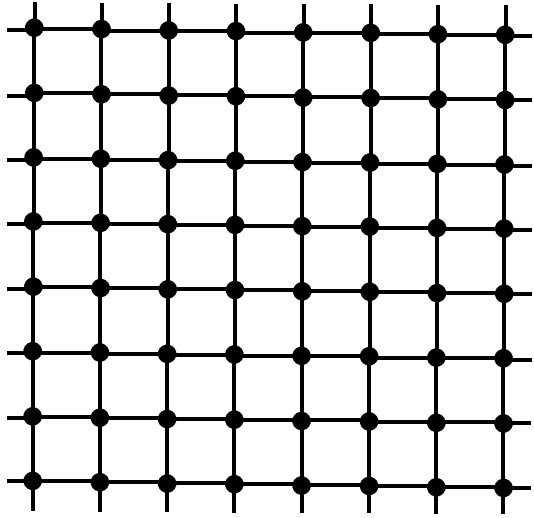}
	\caption{A representation of the graph $G_1$ associated with the linearization of $(\ref{MyPhaseSystem1})$ about the trivial phase-locked solution. Dots represent vertices of the graph and the lines connecting these vertices represent the edges.}
	\label{fig:TrivialGraph}
\end{figure}  

We see that since each vertex in $G_1$ has measure exactly $4$, we are in the first case of Hypothesis $\ref{Hyp:DecayRates}$. Following the discussion prior to Hypothesis $\ref{Hyp:DecayRates}$, we apply the linear time re-parametrization $t \to 4t$ to system $(\ref{PerturbationSystem2})$. Our re-parametrized system now results in the linearization about $\psi = 0$ given by
\begin{equation}
	[\tilde{L}_1x_{i,j}] = \sum_{i',j'} \frac{1}{4}(x_{i',j'} - x_{i,j}),
\end{equation}      
for every $(i,j)\in\mathbb{Z}^2$. Due to the fact that the weight of each vertex is identical, we have not added any loops to the original underlying graph $G_1$. What is important to note though is that now one sees that $-\tilde{L}_1$ is in the form of a graph Laplacian.

The operator $\tilde{L}_1$ and its underlying graph $G_1$ are a well-studied example in the theory of random walks on infinite graphs. Following Definition $\ref{def:VolumeGrowth}$ it was pointed out that this graph $G_1$ satisfies $VD(2)$, and Telcs points out in the proof of Theorem $3$ of $\cite{Telcs}$ that this graph satisfies the properties $PI$ and $\Delta$ as well. Therefore, Hypothesis $\ref{Hyp:DecayRates}$ holds for the trivial phase-locked solution, along with Hypotheses $\ref{Hyp:Neighbours}$, $\ref{Hyp:Linearization}$ and $\ref{Hyp:GraphDistance}$, thus allowing one to apply Theorem~\ref{thm:Stability} to this situation. We state this here as a corollary of Theorem~\ref{thm:Stability}. 

\begin{cor}
	The trivial phase-locked solution to $(\ref{MyPhaseSystem1})$ given by $\{\omega t\}_{(i,j) \in \mathbb{Z}^2}$ is locally asymptotically stable with respect to small perturbations in $\ell^1(\mathbb{Z}^2)$.	
\end{cor}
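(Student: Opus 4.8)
The plan is to recognize this corollary as a direct application of Theorem~\ref{thm:Stability} to the linearized operator $\tilde{L}_1$, so that the entire task reduces to verifying that Hypotheses~\ref{Hyp:Neighbours}--\ref{Hyp:DecayRates} all hold for the trivial phase-lag configuration $\bar{\theta}_{i,j} = 0$. I would dispatch the first two hypotheses immediately. Each lattice site $(i,j)$ couples to exactly its four nearest neighbours, so $\#N((i,j)) = 4$ for every vertex and Hypothesis~\ref{Hyp:Neighbours} holds with $D = 4$. Since $H(x) = \sin x$ gives $H'(x) = \cos x$, evaluating at the trivial phase-lags yields $H'(\bar{\theta}_{i',j'} - \bar{\theta}_{i,j}) = \cos(0) = 1$, which is manifestly symmetric and strictly positive, so Hypothesis~\ref{Hyp:Linearization} is satisfied and every potential influence is in fact realized as an edge of weight one.

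Next I would address Hypothesis~\ref{Hyp:GraphDistance}. Because all weights equal one, the weighted graph $G_1$ coincides with the standard nearest-neighbour lattice $\mathbb{Z}^2$, which is connected, and the graph distance between any vertex and a coupled neighbour is exactly $1$; hence $\sup_{v \in V, v'\in N(v)}\rho(v,v') = 1 < \infty$ and Hypothesis~\ref{Hyp:GraphDistance} holds. Since the vertex measure is constant, $m((i,j)) = 4$ for every site, we fall into the first case of Hypothesis~\ref{Hyp:DecayRates} and no loops need to be adjoined.

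The substance of the verification lies in Hypothesis~\ref{Hyp:DecayRates}, namely in establishing that $G_1$ satisfies $VG(d)$, $PI$ and $\Delta$ for some $d \geq 2$. For $VG(2)$ I would note that the graph ball $B(v,r)$ is a lattice diamond whose cardinality is of order $r^2$, so that ${\rm Vol}(v,r) = 4\,\#B(v,r)$ is trapped between constant multiples of $r^2$; this is precisely the characteristic example of polynomial volume growth cited after Definition~\ref{def:VolumeGrowth}, with $d = 2 \geq 2$. The local elliptic property $\Delta$ follows directly from Lemma~\ref{lem:Delta}, taking $w_{min} = w_{max} = 1$ and $D = 4$. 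The single genuinely nontrivial ingredient is the Poincar\'e inequality $PI$, which I expect to be the main obstacle; rather than prove it from scratch I would invoke the established result for $\mathbb{Z}^2$ recorded in the proof of Theorem~3 of \cite{Telcs}, which supplies $PI$ (and $\Delta$) for exactly this graph.

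With all four hypotheses in hand, I would conclude by applying Theorem~\ref{thm:Stability} with $d = 2$. The theorem furnishes an $\varepsilon > 0$ such that every initial perturbation with $\|\psi_0\|_1 \leq \varepsilon$ generates a global solution satisfying $\|\psi(t)\|_1 \leq C\|\psi_0\|_1$ together with the algebraic decay $\|\psi(t)\|_2 \leq C(1+t)^{-1/2}\|\psi_0\|_1$ and $\|\psi(t)\|_\infty \leq C(1+t)^{-1}\|\psi_0\|_1$. The uniform $\ell^1$ bound yields Lyapunov stability, while the decaying $\ell^\infty$ estimate shows the perturbation is attracted to zero; together these constitute local asymptotic stability of the trivial phase-locked solution with respect to $\ell^1(\mathbb{Z}^2)$ perturbations. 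The only caveat worth flagging is that this decay is algebraic rather than exponential, exactly as anticipated in the introduction.
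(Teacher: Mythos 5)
Your proposal is correct and follows essentially the same route as the paper: verify Hypotheses~\ref{Hyp:Neighbours}--\ref{Hyp:DecayRates} for the unit-weight nearest-neighbour lattice $G_1$ (noting the constant vertex measure $m(v)=4$ puts you in the first case of Hypothesis~\ref{Hyp:DecayRates}, with $VG(2)$ from the standard lattice example and $PI$, $\Delta$ from Telcs), then apply Theorem~\ref{thm:Stability} with $d=2$. Your minor additions --- the explicit lattice-diamond count for $VG(2)$ and deriving $\Delta$ from Lemma~\ref{lem:Delta} rather than citing it --- are consistent refinements, not a different argument.
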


\subsection{Stability of the Rotating Wave Solution}

We now turn our attention to a nontrivial solution of $(\ref{MyPhaseSystem1})$. It was shown in $\cite{MyWork}$ that there exists a solution resembling a rotating wave in that the values of the phase-lags about concentric rings of cells will increase from $0$ up to $2\pi$. Let us simply denote this solution by $\bar{\theta} = \{\bar{\theta}_{i,j}\}_{(i,j) \in \mathbb{Z}^2}$. For visual reference, the solution is illustrated in Figure $\ref{fig:RotWave}$. Here the core of the rotating wave is given by the four cell ring with indices $(i,j) = (0,0), (0,1), (1,0), (1,1)$, represented at the centre of Figure $\ref{fig:RotWave}$. 

\begin{figure} 
	\centering
	\includegraphics[width = 8cm]{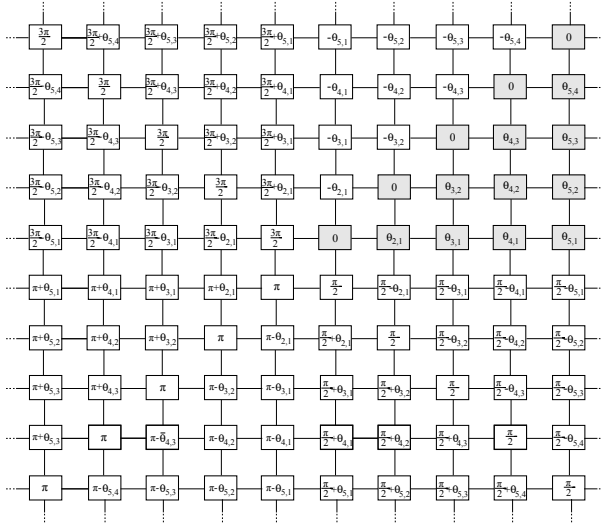}
	\caption{Symmetry of the phase-locked solution on the finite lattice. All phase-lags are obtained by phase advances and delays of the elements in the shaded cells. Image originally appears in $\cite{MyWork}$.}
	\label{fig:RotWave}
\end{figure} 

Linearizing the perturbation system about this solution leads to the linear operator, denoted $L_2$, acting upon the sequences $x = \{x_{i,j}\}_{(i,j) \in \mathbb{Z}^2}$ by
\begin{equation}
	[L_2x]_{i,j} = \sum_{i',j} \cos(\bar{\theta}_{i',j'} - \bar{\theta}_{i,j}) (x_{i',j'} - x_{i,j}), 
\end{equation}
for all $(i,j)\in\mathbb{Z}^2$. We again have that the conditions of Hypothesis $\ref{Hyp:Linearization}$ are satisfied since it was shown that all local interactions are such that $|\bar{\theta}_{i',j'} - \bar{\theta}_{i,j}| \leq \frac{\pi}{2}$. More precisely, with the exception of the 'centre' four cells at $(i,j) = (0,0), (0,1), (1,0), (1,1)$ all local interactions are such that $|\bar{\theta}_{i',j'} - \bar{\theta}_{i,j}| < \frac{\pi}{2}$, whereas the coupling between any two of the four centre cells is exactly $\pi/2$. Hence, we consider the weighted and connected graph, $G_2 = (\mathbb{Z}^2,E_2,w_2)$, with vertex set $\mathbb{Z}^2$ and an edge set which contains all nearest-neighbour connections less those connections between any two of the centre four cells since $\cos(\frac{\pi}{2}) = 0$. A visual representation of this graph is given in Figure $\ref{fig:RotGraph}$, where one notes the absence of connection between the core indices.    

We now remind the reader of some important properties of this rotating wave solution which were detailed in $\cite{MyWork}$. First, the solution is obtained via phase advances and phase delays of a solution obtained on the indices $1 \leq j \leq i$, which is represented by the shaded cells in Figure $\ref{fig:RotWave}$. Furthermore, we have that
\begin{equation}
	\begin{split}
	\begin{aligned}
		&\bar{\theta}_{i,i} = 0, \\
		&\bar{\theta}_{i,0} = \frac{\pi}{2} - \bar{\theta}_{i,1},
	\end{aligned}
	\end{split}
\end{equation} 
for all $i \geq 1$ and 
\begin{equation}
	0 < \bar{\theta}_{i,j} \leq \frac{\pi}{4}
\end{equation}
for all $1 \leq j < i$. Then one uses these facts to see that
\begin{equation} \label{NNBound1}
	|\bar{\theta}_{i',j'} - \bar{\theta}_{i,j}| \leq \frac{\pi}{4},
\end{equation} 
for all $1 \leq j \leq i$ and $1 \leq j' \leq i'$. Another property of the solution is that 
\begin{equation}
	\bar{\theta}_{i,j} \leq \bar{\theta}_{i+1,j}
\end{equation} 
for all $i \leq j \leq i$. One now sees that 
\begin{equation} \label{NNBound2}
	\frac{\pi}{2} > \bar{\theta}_{2,0} - \bar{\theta}_{2,1} = \frac{\pi}{2} - 2\bar{\theta}_{2,1} \geq \frac{\pi}{2} - 2\bar{\theta}_{i,1} = \bar{\theta}_{i,0} - \bar{\theta}_{i,1} \geq 0, 
\end{equation}
for all $i \geq 2$. Equations $(\ref{NNBound1})$ and $(\ref{NNBound2})$ therefore combine to show that all nearest-neighbour interactions within the indices $1 \leq j \leq i$ remain bounded away from $\pm\frac{\pi}{2}$. This therefore gives that
\begin{equation} \label{NNEdges}
	0 < \inf_{1 \leq j \leq i} \cos(\bar{\theta}_{i',j'} - \bar{\theta}_{i,j}) \leq  \sup_{1 \leq j \leq i} \cos(\bar{\theta}_{i',j'} - \bar{\theta}_{i,j}) \leq 1.
\end{equation}  
Since the elements at the indices $1 \leq j \leq i$ are used to define the solution over all the indices, one has that the edge weights are uniformly bounded above and away from $0$.  

\begin{figure} 
	\centering
	\includegraphics[width = 7cm]{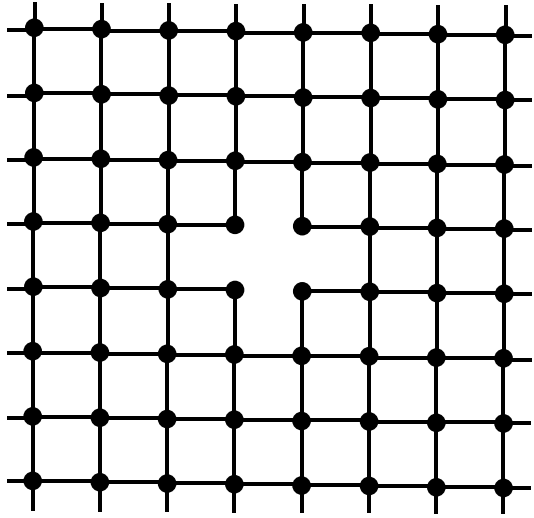}
	\caption{A representation of the graph $G_2$.}
	\label{fig:RotGraph}
\end{figure} 

Now one should note that $G_2$ is significantly different from $G_1$ in that not all vertices have the same number of edges attached to it and that the weights of each edge are not identical, thus putting us in the situation of the second case of Hypothesis $\ref{Hyp:DecayRates}$. Let $w_{min} > 0$ and $w_{max} > 0$ be uniform lower and upper bounds on the weight function, respectively. This allows one to apply the time re-parametrization given by $t \to (4w_{max}+1)t$ (since each vertex has degree at most $4$), resulting in the linear operator 
\begin{equation}
	\tilde{L}_2 := \frac{1}{(4w_{max}+1)}L_2
\end{equation}
and resulting graph $\tilde{G}_2 = (\mathbb{Z}^2,\tilde{E}_2,\tilde{w}_2)$. Recall from our work in Section $\ref{sec:Stability}$ that the graph $\tilde{G}_2$ is merely the graph $G_2$ with added edges connecting each vertex to itself (loops). Moreover, by the construction $(\ref{LoopConstruction})$, the weight of each loop is bounded above by $4w_{max}+1$ and below by $1$. Therefore the edge weights of $\tilde{G}_2$ are uniformly bounded above by an $\tilde{w}_{max} > 0$ and below by an $\tilde{w}_{min} > 0$. Then, Lemma $\ref{lem:Delta}$ implies that there exists an $\alpha > 0$ such that $\tilde{G}_2$ satisfies $\Delta$. To show that $\tilde{G}_2$ further satisfies $PI$ and $VD(2)$, we provide the following proposition.        

\begin{prop} \label{prop:RoughIsometry} 
	The graphs $G_1$ and $\tilde{G}_2$ are rough isomorphic. 	
\end{prop}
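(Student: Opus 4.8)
The plan is to realise the rough isometry by the identity map $T:\mathbb{Z}^2\to\mathbb{Z}^2$, which is available because $G_1$ and $\tilde{G}_2$ share the common vertex set $\mathbb{Z}^2$. With this choice the three requirements of Definition~\ref{def:RoughIsometry} reduce to comparing the two graph metrics and the two vertex measures, and two of the three conditions are essentially immediate.

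I would first dispose of conditions (\ref{Rough2}) and (\ref{Rough3}). Since $T$ is a bijection we have $\rho_{\tilde{G}_2}(T(\mathbb{Z}^2),v')=0$ for every $v'\in\mathbb{Z}^2$, so (\ref{Rough2}) holds for any $M>0$. For (\ref{Rough3}), every vertex of $G_1$ has measure exactly $4$, while the loop construction (\ref{LoopConstruction}) was arranged precisely so that every vertex of $\tilde{G}_2$ has measure exactly $4w_{max}+1$; as both measures are constant in $v$, condition (\ref{Rough3}) holds with any $c>1$ for which $c^{-1}\cdot 4\leq 4w_{max}+1\leq 4c$.

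The substance of the proof is condition (\ref{Rough1}). The key reduction is that adding loops never shortens a path between distinct vertices, so $\rho_{\tilde{G}_2}=\rho_{G_2}$, and $G_2$ is precisely $G_1$ with the finite edge set $F$ deleted, where $F$ consists of the four edges forming the $4$-cycle on the centre cells $(0,0),(0,1),(1,0),(1,1)$. I would then establish a bounded-distortion estimate: whenever a connected graph $G_2$ arises from $G_1$ by removing a finite edge set $F$, one has $\rho_{G_1}(u,v)\leq\rho_{G_2}(u,v)\leq\rho_{G_1}(u,v)+K$ for all $u,v$, with $K:=\sum_{e\in F}\bigl(\rho_{G_2}(x_e,y_e)-1\bigr)$ and $e=(x_e,y_e)$. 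The lower bound is automatic since every walk in $G_2$ is a walk in $G_1$. For the upper bound I would take a $G_1$-geodesic from $u$ to $v$; a geodesic repeats no vertex and hence no edge, so it traverses each edge of $F$ at most once, and replacing every traversed $e\in F$ by a shortest $G_2$-path between its endpoints (finite, by connectedness of $G_2$) produces a $G_2$-walk from $u$ to $v$ whose length exceeds $\rho_{G_1}(u,v)$ by at most $K$.

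Substituting the sandwich $\rho_{G_1}\leq\rho_{\tilde{G}_2}\leq\rho_{G_1}+K$ into (\ref{Rough1}) then closes the argument: with $a=2$ and $b=\max\{K,1\}$ the upper estimate follows from $\rho_{\tilde{G}_2}\leq\rho_{G_1}+K\leq a\rho_{G_1}+b$, and the lower estimate from $\rho_{\tilde{G}_2}\geq\rho_{G_1}\geq a^{-1}\rho_{G_1}-b$. I expect the only point needing genuine attention to be the verification that $G_2$ remains connected after the four centre edges are removed, which underlies the finiteness of $K$; this holds because each centre cell retains its edges to the surrounding lattice, so any deleted centre edge can be bypassed by a bounded detour (for instance $(0,0)\to(-1,0)\to(-1,1)\to(0,1)$ replaces the deleted edge between $(0,0)$ and $(0,1)$). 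Everything else in the proof is formal.
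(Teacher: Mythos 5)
Your proposal is correct and takes essentially the same route as the paper's proof: the identity map on $\mathbb{Z}^2$, the trivial verification of (\ref{Rough2}), a comparison of vertex measures for (\ref{Rough3}), and the sandwich $\rho_{G_1}\leq\rho_{\tilde{G}_2}\leq\rho_{G_1}+8$ obtained by detouring around the four deleted centre edges for (\ref{Rough1}), with the same constants $a=2$, $b=8$ in spirit. Your geodesic lemma merely formalizes the paper's informal ``circumvent each missing edge with two additional steps'' count, and your use of the exact constancy $\tilde{m}\equiv 4w_{max}+1$ from the loop construction (\ref{LoopConstruction}) is a slight sharpening of the paper's cruder bounds $\tilde{w}_{min}\leq m_2(n)\leq 5\tilde{w}_{max}$.
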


\begin{proof}
	Since $G_1$ and $\tilde{G}_2$ have the same vertex set, let us consider the identity mapping $\mathcal{I}: \mathbb{Z}^2 \to \mathbb{Z}^2$ which acts by $\mathcal{I}((i,j)) = (i,j)$ for all $(i,j) \in \mathbb{Z}^2$. We will systematically verify the three rough isometry properties $(\ref{Rough1})$, $(\ref{Rough2})$ and $(\ref{Rough3})$ to show that $\mathcal{I}$ is a rough isometry between the graphs $G_1$ and $\tilde{G}_2$. We will let $\rho_1$, $m_1$ be the distance and vertex weight functions on the graph $G_1$ and $\rho_2$, $m_2$ be the distance and vertex weight functions on the graph $\tilde{G}_2$.
	
	\underline{Property $(\ref{Rough1})$}: Recall that the edge sets of $G_1$ and $\tilde{G}_2$ differ only by the centre four edges which are present in the former and absent in the latter. Let $n_1 = (i_1,j_1), n_2 = (i_2,j_2) \in \mathbb{Z}^2$. Then since $G_1$ has more connections between vertices than $\tilde{G}_2$, one immediately has
	\begin{equation} \label{RoughProp1}
		\rho_1(n_1,n_2) \leq \rho_2(n_1,n_2),
	\end{equation}   
	since any path between the vertices $n_1$ and $n_2$ in $\tilde{G}_2$ could potentially be shortened by the addition of edges connecting different vertices. Conversely, any shortest path connecting vertices in $G_1$ potentially traverses an edge which is absent in $\tilde{G}_2$. Following along this path in $\tilde{G}_2$ requires one to replace those steps across the missing edges with two addition steps to circumvent the missing edge. Since there are a maximum of four missing edges to circumvent, we obtain 
	\begin{equation} \label{RoughProp2}
		\rho_2(n_1,n_2) \leq \rho_1(n_1,n_2) + 8.
	\end{equation}
	Therefore, to obtain the bounds $(\ref{Rough1})$ we use $(\ref{RoughProp1})$ and $(\ref{RoughProp2})$ and take, for example, $a = 2$ and $b = 8$ to have
	\begin{equation}
		\frac{1}{2}\rho_1(n_1,n_2) - 8 \leq \rho_2(n_1,n_2) \leq 2\rho_1(n_1,n_2) + 8.
	\end{equation}
	
	\underline{Property $(\ref{Rough2})$}: This property is trivially satisfied for any $M > 0$ since for all $n = (i,j)\in\mathbb{Z}^2$ we have $\rho_2(\mathbb{Z}^2,n) = 0$.
	
	\underline{Property $(\ref{Rough3})$}: Recall that for all $n = (i,j)\in\mathbb{Z}^2$ we have $m_1(n) = 4$. Furthermore, we keep with the notation above to denote $\tilde{w}_{min} > 0$ and $\tilde{w}_{max} > 0$ as uniform lower and upper bounds, respectively, on the weight function of $\tilde{G}_2$. Therefore, since each vertex has at least one edge connected to it and at most five (four nearest-neighbours and one loop) we have
	\begin{equation}
		\tilde{w}_{min} \leq m_2(n) \leq 5\tilde{w}_{max},
	\end{equation}
	for all $n = (i,j)\in\mathbb{Z}^2$. Hence, 
	\begin{equation}
		\frac{\tilde{w}_{min}}{4}m_1(n) = \tilde{w}_{min} \leq m_2(n) \leq 5\tilde{w}_{max} = \frac{5\tilde{w}_{max}}{4}m_1(n).
	\end{equation} 
	Taking $c := \max\{\frac{5\tilde{w}_{max}}{4},\frac{4}{\tilde{w}_{min}}, 2\} > 1$ gives
	\begin{equation}
		c^{-1}m_1(n) \leq m_2(n) \leq cm_1(n)
	\end{equation}
	for all $n = (i,j) \in \mathbb{Z}^2$. This completes the proof since we have shown that $\mathcal{I}:\mathbb{Z}^2 \to \mathbb{Z}^2$ satisfies all three conditions to be a rough isometry. 
\end{proof}

\begin{cor} \label{cor:RotWaveStability}
	$\tilde{G}_2$ satisfies $VG(2)$, $PI$ and $\Delta$.  
\end{cor}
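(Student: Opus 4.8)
The plan is to obtain the result as an immediate consequence of the rough-isometry invariance of the volume growth and Poincar\'e properties, namely Proposition~\ref{prop:RoughInvariance}. Essentially all of the required ingredients have already been assembled in the preceding discussion, so the proof reduces to confirming that the hypotheses of that proposition are in place and then invoking it with $G = G_1$, $G' = \tilde{G}_2$ and $d = 2$.

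First I would record that both graphs satisfy the local elliptic property $\Delta$, since this is a standing hypothesis of Proposition~\ref{prop:RoughInvariance} and of Definition~\ref{def:RoughIsometry}. For $G_1$ this was noted in Section~\ref{subsec:Trivial} following Telcs' proof of Theorem~3 in $\cite{Telcs}$, while for $\tilde{G}_2$ it follows from Lemma~\ref{lem:Delta}: by the uniform bounds $(\ref{NNEdges})$ on the edge weights of $G_2$ together with the loop construction $(\ref{LoopConstruction})$, the edge weights of $\tilde{G}_2$ are uniformly bounded above by some $\tilde{w}_{max}$ and below by some $\tilde{w}_{min} > 0$, and every vertex has degree at most five. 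Hence the hypotheses of Lemma~\ref{lem:Delta} hold and $\tilde{G}_2$ satisfies $\Delta$.

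Next I would invoke the fact, recalled in Section~\ref{subsec:Trivial}, that the standard lattice $G_1$ satisfies $VG(2)$ and $PI$. Combined with Proposition~\ref{prop:RoughIsometry}, which establishes that $G_1$ and $\tilde{G}_2$ are rough isometric, all the hypotheses of Proposition~\ref{prop:RoughInvariance} are now met, and applying that proposition yields that $\tilde{G}_2$ satisfies $VG(2)$ and $PI$. Together with the verification of $\Delta$ from the previous step, this gives all three properties and completes the proof.

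There is no genuine obstacle here, as the substantive work was already done in establishing the rough isometry and in producing the uniform weight and degree bounds for $\tilde{G}_2$. The only point deserving a moment's care is confirming that the uniform upper and lower weight bounds genuinely persist after the passage from $G_2$ to $\tilde{G}_2$, so that Lemma~\ref{lem:Delta} applies; this was in fact arranged by the loop construction, since each added loop weight lies between $1$ and $4w_{max}+1$, leaving the weights bounded away from both $0$ and $\infty$.
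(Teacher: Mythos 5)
Your proposal is correct and follows essentially the same route as the paper: the paper's proof simply cites Propositions~\ref{prop:RoughInvariance} and~\ref{prop:RoughIsometry}, with the verification of $\Delta$ for $\tilde{G}_2$ via Lemma~\ref{lem:Delta} having been carried out in the text immediately preceding Proposition~\ref{prop:RoughIsometry}. Your write-up merely makes explicit the ingredients (the $\Delta$ check for both graphs and the $VG(2)$, $PI$ properties of $G_1$) that the paper leaves implicit.
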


\begin{proof}
	This is a direct consequence of Propositions $\ref{prop:RoughInvariance}$ and $\ref{prop:RoughIsometry}$.
\end{proof}

Therefore, Corollary $\ref{cor:RotWaveStability}$ allows one to now apply the stability results of Theorem~\ref{thm:Stability} since we have verified all of the necessary hypotheses. We summarize these results as a corollary of Theorem~\ref{thm:Stability} and the work in $\cite{MyWork}$.

\begin{cor}
	There exists a phases-locked rotating wave solution to $(\ref{MyPhaseSystem1})$ which is locally asymptotically stable with respect to small perturbations in $\ell^1(\mathbb{Z}^2)$. 
\end{cor}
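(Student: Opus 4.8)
The plan is to assemble the corollary directly from Theorem~\ref{thm:Stability} by verifying, for the linearization $L_2$ about the rotating wave solution $\bar{\theta}$, that all four of Hypotheses~\ref{Hyp:Neighbours}--\ref{Hyp:DecayRates} hold; the existence of $\bar{\theta}$ itself is imported from $\cite{MyWork}$. First I would dispose of the two combinatorial hypotheses. Hypothesis~\ref{Hyp:Neighbours} is immediate with $D = 4$, since $N((i,j)) = \{(i\pm 1,j),(i,j\pm 1)\}$ for every $(i,j) \in \mathbb{Z}^2$. Hypothesis~\ref{Hyp:Linearization} follows because $H(x) = \sin(x)$ has even derivative $H'(x) = \cos(x)$, which furnishes the symmetry in $(\ref{PositiveWeights})$ automatically, while the non-negativity $\cos(\bar{\theta}_{i',j'} - \bar{\theta}_{i,j}) \geq 0$ is exactly the content of the phase-lag bounds $(\ref{NNBound1})$ and $(\ref{NNBound2})$, which confine every nearest-neighbour phase difference to $[-\tfrac{\pi}{2},\tfrac{\pi}{2}]$.

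Next I would verify Hypothesis~\ref{Hyp:GraphDistance}. Connectedness of $G_2$ is already recorded; the only subtlety is the metric bound $(\ref{MetricBound})$, which could fail only at the four core indices whose mutual edges are deleted because $\cos(\tfrac{\pi}{2}) = 0$. For any such lattice-adjacent pair $v,v' \in N(v)$ I would exhibit a short detour leaving the core through an adjacent off-core vertex, showing $\rho(v,v') \leq 3$ there and $\rho(v,v') = 1$ for every other influencing pair, so the supremum in $(\ref{MetricBound})$ is finite. For Hypothesis~\ref{Hyp:DecayRates} I am in the second case, since the edge weights $\cos(\bar{\theta}_{i',j'} - \bar{\theta}_{i,j})$ vary with the vertex by $(\ref{NNEdges})$ and hence $m(v)$ is not constant; Corollary~\ref{cor:RotWaveStability} supplies exactly what is needed, namely that the loop-augmented graph $\tilde{G}_2$ satisfies $VG(2)$, $PI$ and $\Delta$ with $d = 2 \geq 2$.

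With all four hypotheses in hand, Theorem~\ref{thm:Stability} applies verbatim: for every perturbation $\psi_0 \in \ell^1(\mathbb{Z}^2)$ with $\|\psi_0\|_1$ sufficiently small there is a unique solution of the perturbation system $(\ref{PerturbationSystem2})$ that decays algebraically in $\ell^2$ and $\ell^\infty$ with rates governed by $d = 2$, which is precisely the asserted local asymptotic stability of the rotating wave with respect to small $\ell^1$ perturbations. The genuinely substantive step is not in this assembly but upstream, in Corollary~\ref{cor:RotWaveStability}: the Poincar\'e inequality $PI$ for $\tilde{G}_2$ is inaccessible directly and is obtained only through the rough-isometry transfer from the standard lattice $G_1$ (Proposition~\ref{prop:RoughIsometry}) combined with the invariance in Proposition~\ref{prop:RoughInvariance}. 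I would therefore expect the main point a reader must keep in mind to be the careful bookkeeping in Hypothesis~\ref{Hyp:GraphDistance} together with the confirmation that the uniform weight bounds $(\ref{NNEdges})$ persist across all of $\mathbb{Z}^2$, since every deeper ingredient has already been established.
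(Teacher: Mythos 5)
Your proposal is correct and follows essentially the same route as the paper: verify Hypotheses~\ref{Hyp:Neighbours}--\ref{Hyp:DecayRates} for the linearization $L_2$ (using the phase-lag bounds from \cite{MyWork} for symmetry/non-negativity and the uniform weight bounds $(\ref{NNEdges})$), invoke the rough-isometry transfer of $VG(2)$, $PI$ and $\Delta$ from $G_1$ to $\tilde{G}_2$ via Propositions~\ref{prop:RoughIsometry} and \ref{prop:RoughInvariance}, and then apply Theorem~\ref{thm:Stability}. Your explicit detour argument giving $\rho(v,v') \leq 3$ for the core pairs is a slightly more careful verification of the metric bound $(\ref{MetricBound})$ than the paper spells out, but it is the same idea the paper uses implicitly in the proof of Proposition~\ref{prop:RoughIsometry}.
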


\subsection{Doubly Spatially Periodic Stable Patterns} 

Now that we have two applications of Theorem~\ref{thm:Stability} to specific phase-locked solutions, let us demonstrate that system $(\ref{MyPhaseSystem1})$ can exhibit infinitely many stable phase-locked solutions. Begin by fixing any two integers $N_1,N_2 \geq 5$ and consider the phase-lags given by
\begin{equation} \label{Lags2}
	\bar{\theta}_{i,j} = \frac{2\pi [i]_{N_1}}{N_1} +  \frac{2\pi [j]_{N_2}}{N_2},
\end{equation}
where we have used the notation $[n]_N = n \pmod{N}$. These solutions form periodic waves in both the horizontal and vertical directions. It is easy to see that solutions of this type solve $(\ref{MyPhaseLags})$ since for all $(i,j)\in\mathbb{Z}^2$ we have
\begin{equation}
	\begin{split}
	\begin{aligned}
		\sin(\bar{\theta}_{i+1,j} - \bar{\theta}_{i,j}) + \sin(\bar{\theta}_{i-1,j} - \bar{\theta}_{i,j}) = \sin\bigg(\frac{2\pi}{N_1}\bigg) + \sin\bigg(-\frac{2\pi}{N_1}\bigg) = 0,  
	\end{aligned}
	\end{split}
\end{equation}
where we have used the $2\pi$-periodicity of sine and the fact that it is an odd function. An equivalent formulation holds for the up/down connections, giving that the left/right and up/down connections of $(\ref{MyPhaseSystem1})$ independently cancel to satisfy $(\ref{MyPhaseLags})$.

Applying the perturbation ansatz and linearizing about $\psi = 0$ results in a linear operator $L_3$ which acts on the sequences $x = \{x_{i,j}\}_{(i,j)\in\mathbb{Z}^2}$ by
\begin{equation}
	\begin{split}
		[L_3 x]_{i,j} = \cos\bigg(&\frac{2\pi}{N_1}\bigg)\bigg[(x_{i+1,j} - x_{i,j}) + (x_{i-1,j} - x_{i,j})\bigg] \\ 
		&+ \cos\bigg(\frac{2\pi}{N_2}\bigg)\bigg[(x_{i,j+1} - x_{i,j}) + (x_{i,j-1} - x_{i,j})\bigg].
	\end{split}
\end{equation}
Furthermore, since $N_1,N_2 \geq 5$ we have that 
\begin{equation}
	0 < \frac{2\pi}{N_1}, \frac{2\pi}{N_2} < \frac{\pi}{2} \implies \cos\bigg(\frac{2\pi}{N_1}\bigg), \cos\bigg(\frac{2\pi}{N_2}\bigg) > 0, 
\end{equation}
giving that the symmetry and positivity requirements of $(\ref{PositiveWeights})$ have been met. Here the underlying graph, $G_3 = (\mathbb{Z}^2,E_3,w_3)$, has vertex set $\mathbb{Z}^2$ and an edge set, $E_3$ which contains all nearest-neighbour interactions, similar to $G_1$. The difference now is that the associated weight function assigns a weight of $\cos(\frac{2\pi}{N_1})$ to horizontal (left/right) edges and a weight of $\cos(\frac{2\pi}{N_2})$ to vertical (up/down) edges. Clearly this graph is connected, and looks indistinguishable to that given in Figure $\ref{fig:TrivialGraph}$ for $G_1$ when edge weights are not labelled.

Now, the measure of each vertex is given identically by $2\cos(\frac{2\pi}{N_1}) + 2\cos(\frac{2\pi}{N_2})$. Hence, following the discussion prior to Hypothesis $(\ref{Hyp:DecayRates})$, we apply the linear time re-parametrization $t \to [2\cos(\frac{2\pi}{N_1}) + 2\cos(\frac{2\pi}{N_2})]t$ to system $(\ref{PerturbationSystem2})$. This results in a graph Laplacian upon linearizing about the phase-lags $(\ref{Lags2})$, denoted 
\begin{equation}
	\tilde{L}_3 = \frac{1}{2\cos(\frac{2\pi}{N_1}) + 2\cos(\frac{2\pi}{N_2})} L_3.
\end{equation} 
One should note in this case, as with the trivial solution, our re-parametrization of $t$ has not added any loops to the original underlying graph $G_3$. In fact, we have not altered the underlying graph $G_3$ in any way since no edges have been added.

\begin{prop} \label{prop:RoughIsometry2} 
	The graphs $G_1$ and $G_3$ are rough isomorphic. 
\end{prop}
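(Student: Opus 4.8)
The plan is to exhibit the identity map $\mathcal{I}: \mathbb{Z}^2 \to \mathbb{Z}^2$, acting by $\mathcal{I}((i,j)) = (i,j)$, as the desired rough isometry, in direct analogy with Proposition~\ref{prop:RoughIsometry}. The situation here is strictly easier, however, because $G_1$ and $G_3$ are far closer to one another than $G_1$ and $\tilde{G}_2$ were. The crucial observation is that $G_1$ and $G_3$ share not only the same vertex set $\mathbb{Z}^2$ but also the very same edge set, namely all nearest-neighbour connections on the lattice; only the edge weights differ. Before appealing to Definition~\ref{def:RoughIsometry}, I would first record that both graphs satisfy the local elliptic property $\Delta$, which is a standing hypothesis of that definition: for $G_1$ this follows from Lemma~\ref{lem:Delta} with $w_{min} = w_{max} = 1$ and $D = 4$, while for $G_3$ it follows from the same lemma with $w_{min} = \min\{\cos(\frac{2\pi}{N_1}),\cos(\frac{2\pi}{N_2})\} > 0$, $w_{max} = \max\{\cos(\frac{2\pi}{N_1}),\cos(\frac{2\pi}{N_2})\}$ and $D = 4$.

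With the preconditions secured, I would verify the three rough isometry conditions $(\ref{Rough1})$–$(\ref{Rough3})$ for $\mathcal{I}$, writing $\rho_1, m_1$ and $\rho_3, m_3$ for the respective graph metrics and vertex measures. For $(\ref{Rough1})$ the key point is that the shortest-path metric on a graph depends only on the edge set, not on the weights; since $E_1 = E_3$, the two metrics coincide identically, $\rho_1 = \rho_3$, so any choice such as $a = 2$, $b = 1$ makes $(\ref{Rough1})$ hold trivially. Condition $(\ref{Rough2})$ is immediate for any $M > 0$ because $\mathcal{I}$ is surjective, whence $\rho_3(\mathcal{I}(\mathbb{Z}^2), n) = 0$ for every $n \in \mathbb{Z}^2$.

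The only even mildly substantive step is $(\ref{Rough3})$, and it reduces to comparing two constants. Since $m_1(n) = 4$ for all $n$ and $m_3(n) = 2\cos(\frac{2\pi}{N_1}) + 2\cos(\frac{2\pi}{N_2})$ for all $n$, both vertex measures are positive constants independent of $n$, so their ratio is a single fixed positive number. Taking
\[
c := \max\bigg\{\frac{4}{2\cos(\frac{2\pi}{N_1}) + 2\cos(\frac{2\pi}{N_2})},\ \frac{2\cos(\frac{2\pi}{N_1}) + 2\cos(\frac{2\pi}{N_2})}{4},\ 2\bigg\} > 1
\]
then immediately yields $c^{-1} m_1(n) \leq m_3(n) \leq c\, m_1(n)$ for all $n \in \mathbb{Z}^2$. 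I anticipate no genuine obstacle in this argument: because the edge sets agree, the metric comparison is an exact identity rather than an estimate, and because both measures are spatially constant, the measure comparison is a single numerical inequality. The real content of the proposition is thus simply the observation that rescaling all edge weights by bounded positive factors, without altering the edge set, preserves membership in the rough isometry class, which is precisely what is needed to transport $VG(2)$ and $PI$ from $G_1$ to $G_3$ via Proposition~\ref{prop:RoughInvariance}.
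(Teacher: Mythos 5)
Your proposal is correct and follows essentially the same route as the paper's own proof: the identity map on $\mathbb{Z}^2$, with $(\ref{Rough1})$ and $(\ref{Rough2})$ trivial because the vertex and edge sets (and hence the hop metrics) coincide, and $(\ref{Rough3})$ reduced to comparing two constant vertex measures. You are in fact slightly more careful than the paper, since you explicitly verify the standing hypothesis $\Delta$ via Lemma~\ref{lem:Delta} and supply an explicit constant $c$, where the paper simply declares these points immediate.
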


\begin{proof}
	This proof follows in exactly the same was as the proof of Proposition $\ref{prop:RoughIsometry}$. We again consider the identity mapping $\mathcal{I}: \mathbb{Z}^2 \to \mathbb{Z}^2$ acting between the vertex sets of the respective graphs. Furthermore, since $G_1$ and $G_3$ have the same vertex and edge set, they have the same distance function. This immediately gives that property $(\ref{Rough1})$ is satisfied for any $a > 1$ and $b > 0$. 
	
	As previously stated, since the vertex sets of $G_1$ and $G_3$ are the same, property ($\ref{Rough2}$) is also immediately satisfied. This leaves one to check that property ($\ref{Rough3}$) is satisfied. But this is again obvious since the vertex weights in both $G_1$ and $G_3$ are independent of the vertices. Therefore, one finds that $G_1$ and $G_3$ are rough isomorphic.   
\end{proof}

\begin{cor}
	$G_3$ satisfies $VD(2)$, $PI$ and $\Delta$.
\end{cor}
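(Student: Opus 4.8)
The plan is to obtain the corollary as an immediate application of the rough-isometry invariance recorded in Proposition~\ref{prop:RoughInvariance}, transporting the properties of $G_1$ across the rough isometry supplied by Proposition~\ref{prop:RoughIsometry2}. Recall from Section~\ref{subsec:Trivial} that $G_1$ was already shown to satisfy $VG(2)$, $PI$ and $\Delta$; since $G_1$ and $G_3$ are rough isometric, I expect $VG(2)$ and $PI$ to transfer directly to $G_3$ once the hypotheses of Proposition~\ref{prop:RoughInvariance} are in place.

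The one preliminary step I would carry out first is to verify that $G_3$ itself satisfies $\Delta$, because Proposition~\ref{prop:RoughInvariance} requires \emph{both} graphs to satisfy the local elliptic property, and $\Delta$ is not among the properties that the proposition transports. This verification is routine via Lemma~\ref{lem:Delta}: every vertex of $G_3$ has degree exactly $4$, and because $N_1,N_2\geq 5$ the edge weights assume only the two values $\cos(2\pi/N_1)$ and $\cos(2\pi/N_2)$, both strictly positive, so the weights are uniformly bounded above and below away from zero. Lemma~\ref{lem:Delta} then yields an $\alpha>0$ for which $G_3$ satisfies $\Delta$.

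With $\Delta$ confirmed for $G_3$ (and already known for $G_1$) and the rough isometry of Proposition~\ref{prop:RoughIsometry2} in hand, I would apply Proposition~\ref{prop:RoughInvariance} with $d=2$ to conclude that $G_3$ inherits $VG(2)$ and $PI$ from $G_1$. There is no substantive obstacle in this argument; the only point deserving explicit attention is the bookkeeping check that the $\Delta$-hypothesis of Proposition~\ref{prop:RoughInvariance} holds for $G_3$, which is precisely why I would state the Lemma~\ref{lem:Delta} verification rather than collapse the whole corollary into a one-line appeal to the two propositions.
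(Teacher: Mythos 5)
Your proposal is correct and takes essentially the same route as the paper, which obtains this corollary exactly as it obtains Corollary~\ref{cor:RotWaveStability}: as a direct consequence of Propositions~\ref{prop:RoughInvariance} and~\ref{prop:RoughIsometry2}. Your explicit verification of $\Delta$ for $G_3$ via Lemma~\ref{lem:Delta} is a detail the paper leaves implicit, and it is indeed needed --- both because Definition~\ref{def:RoughIsometry} and Proposition~\ref{prop:RoughInvariance} require $\Delta$ of both graphs and because $\Delta$ is not among the properties the proposition transports --- so spelling it out only makes the argument more complete.
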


Hence, we see that we have satisfied all of the hypotheses to apply Theorem~\ref{thm:Stability}. This leads to the following corollary. 

\begin{cor}
	For any two integers $N_1,N_2 \geq 5$, the phase-locked solution to $(\ref{MyPhaseSystem1})$ given by $\{\omega t + \bar{\theta}_{i,j}\}_{(i,j) \in \mathbb{Z}^2}$ with $\bar{\theta}_{i,j}$ as in $(\ref{Lags2})$, is locally asymptotically stable with respect to small perturbations in $\ell^1(\mathbb{Z}^2)$. 
\end{cor}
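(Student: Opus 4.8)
The plan is to verify the four Hypotheses of Theorem~\ref{thm:Stability} for the linearization $L_3$ and its underlying graph $G_3$, and then invoke that theorem directly; almost all of the analytic content has already been extracted in the preceding propositions, so this corollary is an assembly of the pieces rather than a fresh argument.

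First I would check Hypothesis~\ref{Hyp:Neighbours}. Since $V=\mathbb{Z}^2$ with $N((i,j))=\{(i\pm1,j),(i,j\pm1)\}$, every vertex has exactly four neighbours, so $1\leq\#N(v)\leq 4$ and we may take $D=4$. For Hypothesis~\ref{Hyp:Linearization} I would observe that $H(x)=\sin(x)$ gives $H'(x)=\cos(x)$, an even function, so the symmetry $H'(\bar\theta_{v'}-\bar\theta_v)=H'(\bar\theta_v-\bar\theta_{v'})$ in $(\ref{PositiveWeights})$ holds automatically; the nearest-neighbour phase differences are $\pm2\pi/N_1$ (horizontal) and $\pm2\pi/N_2$ (vertical), and because $N_1,N_2\geq 5$ forces $0<2\pi/N_i\leq 2\pi/5<\pi/2$, one has $\cos(2\pi/N_i)>0$, which secures the positivity in $(\ref{PositiveWeights})$.

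Next, for Hypothesis~\ref{Hyp:GraphDistance} I would note that $G_3$ retains all nearest-neighbour edges with strictly positive weights, so it is connected and every $v'\in N(v)$ is joined to $v$ by a genuine edge; hence $\rho(v,v')=1$ and $\sup_{v\in V,\,v'\in N(v)}\rho(v,v')=1<\infty$. Finally, Hypothesis~\ref{Hyp:DecayRates} is supplied by the immediately preceding corollary, which asserts that $G_3$ satisfies $VG(2)$, $PI$ and $\Delta$. Since the vertex measure $m(v)=2\cos(2\pi/N_1)+2\cos(2\pi/N_2)$ is independent of $v$, we land in the first case of Hypothesis~\ref{Hyp:DecayRates} with $d=2\geq 2$, and no loops need be adjoined to $G_3$.

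With all four Hypotheses in place I would apply Theorem~\ref{thm:Stability}, which furnishes an $\varepsilon>0$ such that every initial perturbation $\psi_0$ with $\|\psi_0\|_1\leq\varepsilon$ determines a unique global solution $\psi(t)$ of $(\ref{PerturbationSystem2})$ obeying $\|\psi(t)\|_\infty\leq C(1+t)^{-1}\|\psi_0\|_1$, together with the corresponding $\ell^1$ and $\ell^2$ bounds. This algebraic decay to zero in every $\ell^p$ norm is exactly local asymptotic stability of the phase-locked state in $\ell^1(\mathbb{Z}^2)$. I do not expect any genuine obstacle at this stage: the only step carrying real mathematical weight is the Poincar\'e inequality $PI$ for $G_3$, and that has already been obtained in the previous corollary by transporting $PI$ from the standard lattice $G_1$ along the rough isometry of Proposition~\ref{prop:RoughIsometry2} via the invariance statement of Proposition~\ref{prop:RoughInvariance}. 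Consequently the corollary reduces to careful bookkeeping of the hypotheses.
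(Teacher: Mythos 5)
Your proposal is correct and follows essentially the same route as the paper: verify Hypotheses~\ref{Hyp:Neighbours}--\ref{Hyp:DecayRates} for $G_3$ (with the positivity of $\cos(2\pi/N_i)$ from $N_1,N_2\geq 5$, connectedness giving $\rho(v,v')=1$ for neighbours, and the constant vertex measure placing you in the first case of Hypothesis~\ref{Hyp:DecayRates} with no loops added), import $VG(2)$, $PI$ and $\Delta$ from the rough isometry with $G_1$ via Propositions~\ref{prop:RoughIsometry2} and~\ref{prop:RoughInvariance}, and then apply Theorem~\ref{thm:Stability} with $d=2$. The paper's own argument is exactly this assembly of the preceding subsection's results, so there is nothing to add.
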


\section{Discussion} \label{sec:Discussion} 

In this work we have provided sufficient conditions to obtain local asymptotic stability of phase-locked solutions to infinite systems of coupled oscillators. We saw that this stability result was obtained from a unification of some results pertaining to random walks on infinite weighted graphs. It is clear that the results in this work are best suited for odd coupling functions, with the example examined in detail in the work being $H(x) = \sin(x)$. Moreover, such a sinusoidal coupling function is not a mere mathematical idealization to demonstrate the uses of this new stability result, but conversely our hypotheses and assumptions are based upon well-studied paradigms in the theory of weakly coupled oscillators, such as the Kuramoto model.

Aside from pointing out the symmetry limitation in Remark $\ref{rmk:Digraph}$, our work here has another minor shortcoming. One should notice that Hypothesis $\ref{Hyp:DecayRates}$ requires that the underlying graph satisfy $VG(d)$ for some $d \geq 2$, leaving the case when $d = 1$ out of the scope of Theorem~\ref{thm:Stability}. Although this appears as a minor detail in our hypotheses, the proof of Theorem~\ref{thm:Stability} heavily relies on the fact that $d\geq 2$ in order to properly apply Lemma $\ref{lem:IntegralLemma}$. It appears that the cases when $d = 1$ cannot be reconciled using the methods put forth in this paper, but it should be noted that techniques such as the discrete Fourier transform could potentially be applied to obtain similar results. Furthermore, our examples have used positive integer values for $d$ due to their relevance to the important integer lattices, but it should be noted that it is possible to construct fractal graphs which dimension $d$ which need not be an integer. Although it could be difficult to motivate the study of coupled oscillators on fractal graphs, it is certainly interesting to note that Theorem~\ref{thm:Stability} makes no claim that $d$ is an integer, and therefore could be applied to fractal graphs with dimension $d \geq 2$ so long as the remaining assumptions are satisfied.   

Let us examine a brief example of a case where Theorem~\ref{thm:Stability} cannot be applied. Consider the system with $V = \mathbb{Z}$ given by
\begin{equation} \label{ChainSystem}
	\dot{\theta}_i = \omega + \sin(\theta_{i+1} - \theta_i) + \sin(\theta_{i-1} - \theta_i),
\end{equation}   
for all $i \in \mathbb{Z}$ and some $\omega\in\mathbb{R}$. One may follow the same techniques laid out in Section $\ref{subsec:Trivial}$ to inspect the stability of the trivial phase-locked solution given by $\theta_i(t) = \omega t$, for all $i \in \mathbb{Z}$. We refrain from going into detail here, but one can show that the underlying graph is the one-dimensional version of $G_1$ from Section $\ref{subsec:Trivial}$, for which we give a visual representation in Figure $\ref{fig:LinearGraph}$. Furthermore, in the discussion following Definition $\ref{def:VolumeGrowth}$ it was pointed out that this graph satisfies $VG(1)$, thus we find that we are unable to determine stability of this trivial phase-locked solution using Theorem~\ref{thm:Stability}. Of course, stability of the linearized equation can be determined via the random walk probabilities outlined in this manuscript, or with the discrete Fourier transform in a manner similar to \cite{Stefanov}, but the extension to nonlinear stability cannot be obtained with the methods in this manuscript due to the relatively slow decay which prevents one from closing the bootstrapping argument.  

\begin{figure} 
	\centering
	\includegraphics[width = 7cm]{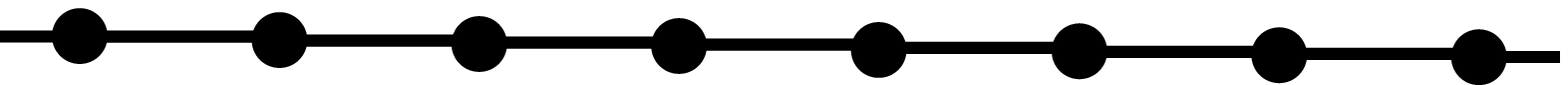}
	\caption{The graph corresponding to the trivial phase-locked solution to system $(\ref{ChainSystem})$. Here the vertices lie in one-to-one correspondence with the elements of $\mathbb{Z}$. All edges have weight exactly $1$ and connect the vertex at $i \in \mathbb{Z}$ to the vertices at $i\pm1$.}
	\label{fig:LinearGraph}
\end{figure} 

Most interestingly, it seems that this problem in system $(\ref{ChainSystem})$ cannot be reconciled by increasing the distance of influence for each oscillator. That is, consider a generalization of system $(\ref{ChainSystem})$ given by the phase system
\begin{equation} \label{GeneralChainSystem}
	\dot{\theta}_i = \omega + \sum_{j = i-n}^{i+n} \sin(\theta_j - \theta_i),
\end{equation}
for each $i \in \mathbb{Z}$ and a fixed $n \geq 1$. One finds that the trivial phase-locked solution again relates to a weighted graph which satisfies $VG(1)$ (for example, see Figure $\ref{fig:LinearGraph2}$), thus showing that by increasing the range of influences we still cannot conclude local asymptotic stability. Hence, one hopes to determine an appropriate analogue of Theorem~\ref{thm:Stability} which works for any graph satisfying $VG(d)$ for arbitrary $d > 0$.    

\begin{figure} 
	\centering
	\includegraphics[width = 7cm]{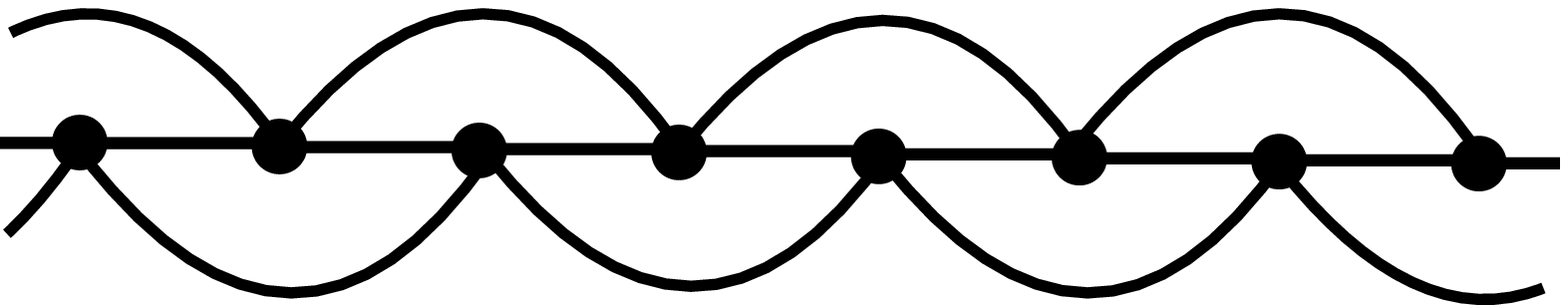}
	\caption{The graph corresponding to the trivial phase-locked solution to system $(\ref{GeneralChainSystem})$ with $n = 2$. Here again the vertices lie in one-to-one correspondence with the elements of $\mathbb{Z}$. All edges have weight exactly $1$ and connect the vertex at $i \in \mathbb{Z}$ to the vertices at $i\pm1$ and $i\pm2$.}
	\label{fig:LinearGraph2}
\end{figure} 

The work in this paper is only a starting point for many necessary further investigations into the study of infinitely many coupled oscillators. Now that a link between the stability of phase-locked solutions and random walks on weighted graphs has been established, it is the hope that questions in one area could motivate research in the other. It is also the hope that many of the results presented in Section $\ref{sec:Graphs}$ for undirected weighted graphs can be extended in an appropriate way for weighted digraphs, which would in turn help to provide a more robust stability result than that which has been established in Theorem~\ref{thm:Stability}.

\section*{Acknowledgements} 

This work is supported by an Ontario Graduate Scholarship while at the University of Ottawa. The author is very thankful to Benoit Dionne and Victor LeBlanc for their comments on how to properly convey the results. The author would also like to acknowledge a beneficial correspondence with Erik Van Vleck of the University of Kansas which initiated the work in this manuscript.

\end{document}